\documentclass[12pt]{amsart}
\usepackage{amsfonts, amsbsy, amsmath, amssymb}

\usepackage[T1]{fontenc}
\usepackage[utf8]{inputenc}
\usepackage{lmodern}
\usepackage{amsmath, amsthm, amssymb,amscd, mathrsfs, amsfonts, mathtools,tikz-cd}
\usepackage{relsize}
\usepackage{euler}
\usepackage{mathtools}

\usepackage{pbox}
\usepackage{times}
\usepackage[all]{xy}
\usepackage{todonotes}
\usepackage{xcolor}
\usepackage[bookmarks=true,
bookmarksnumbered=true, breaklinks=true,
pdfstartview=FitH, hyperfigures=false,
plainpages=false, naturalnames=true,
colorlinks=true,pagebackref=true,
pdfpagelabels]{hyperref}

\usepackage{tikz}
\usepackage{pgfplots}

\pgfplotsset{compat=1.10}
\usepgfplotslibrary{fillbetween}
\usetikzlibrary{patterns}
\usetikzlibrary{matrix}

\usepackage[lite,alphabetic]{amsrefs}

\renewcommand{\PrintDOI}[1]{\href{http://dx.doi.org/\detokenize{#1}}{doi: \detokenize{#1}}%
	\IfEmptyBibField{pages}{, (to appear in print)}{}}

\def\commutatif{\ar@{}[rd]|{\circlearrowleft}}

\newtheorem{theorem}{Theorem}[section]
\newtheorem{prop}[theorem]{Proposition}
\newtheorem{lem}[theorem]{Lemma}
\newtheorem{cor}[theorem]{Corollary}

\theoremstyle{definition}
\newtheorem{definition}[theorem]{Definition}

\theoremstyle{remark}
\newtheorem{rmk}[theorem]{Remark}

\newtheorem{example}[theorem]{Example}

\allowdisplaybreaks

\newcommand{\R}{\mathbb{R}}

\usepackage{bbm}

\def \R{\mathbb{R}}

\hypersetup{
	colorlinks = true,
	urlcolor = blue,
	linkcolor = blue,
	citecolor = red,
	pdfpagemode = UseNone
}

\begin{document}

\title{Helly Numbers for Connected Reductive Groups and Splitting of Toric Principal Bundles}

\begin{abstract}
For an almost simple complex algebraic group $G$ of semisimple rank $m$, we prove that a finite family of parabolic subgroups contains a common maximal torus whenever every subfamily of at most $m+2$ members does.  We determine the resulting Helly number $(h(G))$ for almost simple groups of types $A$, $B$, $C$, $G_2$, $F_4$, $E_7$,
and $E_8$, and obtain bounds differing by one in types $D$ and $E_6$. For general reductive groups of positive semisimple rank, the Helly number is the maximum of those of the almost simple factors. We apply these results to  establish a fan-theoretic criterion characterizing smooth toric varieties on which every toric principal $G$-bundle splits equivariantly. In particular, for $d \ge 2$, this splitting property holds on $\mathbb{P}^d$ if and only if $d \ge h(G)$.
\end{abstract}

\author{Boris Tsvelikhovskiy} 
\address{Department of Mathematics, University of California, Riverside, CA, USA} 
\email{tsvelibor@gmail.com}

\maketitle


\section{Introduction}

Let $W$ be a finite-dimensional vector space over a field $\Bbbk$. A collection of flags $\mathcal{F}_1,  \dots, \mathcal{F}_n$ in $W$ is said to be \textit{adapted} to a basis $\mathcal{B}$ of $W$ if each subspace appearing in any of the flags $\mathcal{F}_i$ is spanned by a subset of $\mathcal{B}$. A remarkable theorem of Klyachko \cite[Theorem~6.1.2]{Klyachko} establishes a local-to-global criterion for the simultaneous adaptation of flags.

\begin{theorem}
\label{thm:klyachko-helly-flags}
Let $S = \{\mathcal{F}_1, \dots, \mathcal{F}_n\}$ be a collection of flags in an $m$-dimensional vector space $W$. For $S$ to be simultaneously adapted to a single basis $\mathcal{B}$ of $W$, it is necessary and sufficient that every subcollection of at most $m+1$ flags from $S$ be adapted to some basis of $W$.
\end{theorem}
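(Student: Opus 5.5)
We argue by induction on $m=\dim W$; necessity is trivial, since a subcollection of an adapted collection is adapted to the same basis. For $m=1$ every flag consists of $0\subset W$ and there is nothing to prove. For the inductive step, assume every subcollection of at most $m+1$ flags of $S$ is adapted to some basis. If every flag is trivial we are done. Otherwise choose a flag, say $\mathcal{F}_1$ after refining it to a complete flag (refining $\mathcal{F}_1$ inside one adapted basis keeps every sub-hypothesis true, since we may refine using that basis). Let $H\subset W$ be the hyperplane of $\mathcal{F}_1$.

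The plan splits $W=H\oplus L$ and treats the two pieces separately.

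\emph{Step 1 (the hyperplane).} Restrict each flag to $H$: let $\mathcal{F}_i\cap H$ be the flag of subspaces $V\cap H$ for $V\in\mathcal{F}_i$. Take any $m$ of these restricted flags. The corresponding $m$ original flags together with $\mathcal{F}_1$ form $m+1$ flags, so they are adapted to a basis $\mathcal{B}$ of $W$. Since $H\in\mathcal{F}_1$, the hyperplane $H$ is spanned by $\mathcal{B}\setminus\{b\}$ for a single vector $b$. Each $V$ in these flags is spanned by a subset of $\mathcal{B}$, so $V\cap H$ is spanned by the part of that subset lying in $\mathcal{B}\setminus\{b\}$. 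Thus every $m$ restricted flags are adapted to a basis of $H$, and by induction all restricted flags are adapted to one basis $\mathcal{B}'$ of $H$.

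\emph{Step 2 (the complementary line).} For each flag $\mathcal{F}_i$ not entirely inside $H$, let $M_i$ be its smallest member with $M_i\not\subset H$. We want a vector $v\in\bigcap_i M_i$ with $v\notin H$. Then $\mathcal{B}'\cup\{v\}$ is the desired basis. Indeed, subspaces $V\subset H$ are spanned by elements of $\mathcal{B}'$. A subspace $V\not\subset H$ contains $M_i$, hence contains $v$, and so $V=(V\cap H)+\Bbbk v$ by dimension count, which is spanned by elements of $\mathcal{B}'\cup\{v\}$.

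To find $v$, fix a linear form $\varphi$ with kernel $H$ and set $A_i=M_i\cap\{\varphi=1\}$. Each $A_i$ is a nonempty affine subspace of the $(m-1)$-dimensional affine space $\{\varphi=1\}$, and we need $\bigcap_i A_i\neq\emptyset$. For any $m$ of the $M_i$, the argument of Step 1 gives a basis $\mathcal{B}$ with $H=\langle\mathcal{B}\setminus\{b\}\rangle$. Each such $M_i$ is spanned by a subset of $\mathcal{B}$ not contained in $H$, hence containing $b$. So $b$ lies in all of them, and after rescaling $b/\varphi(b)$ lies in the corresponding $A_i$. Thus every $m$ of the $A_i$ meet.

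The remaining ingredient, and the main point needing care since $\Bbbk$ is arbitrary and convexity is unavailable, is a Helly theorem for affine subspaces. The claim is that if affine subspaces of a $d$-dimensional affine space over any field meet $d+1$ at a time, then all of them meet. I would prove it by a chain argument. Suppose the total intersection is empty. Build a sequence $A_{i_1},A_{i_1}\cap A_{i_2},\dots$, at each stage adding a member that strictly shrinks the current nonempty intersection, either by lowering its dimension or by making it empty. This is always possible: if no member shrinks the current intersection $I$, then $I$ lies in every $A_i$, contradicting emptiness. Nonempty intersections of affine subspaces are affine subspaces, and a strict shrink of a nonempty affine subspace lowers its dimension. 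Starting from dimension at most $d$, the sequence becomes empty after at most $d+1$ members, giving a subfamily of at most $d+1$ sets with empty intersection. Applied with $d=m-1$, this yields $v$ and completes the induction.
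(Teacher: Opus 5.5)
Steps 1 and 2 and your affine Helly lemma are fine, but the opening move is not. The claim that refining $\mathcal{F}_1$ to a complete flag ``inside one adapted basis keeps every sub-hypothesis true'' is false. Different subcollections are adapted to different bases, and a hyperplane $H$ that is coordinate for one of these bases need not be coordinate for the others. For instance, in $W=\Bbbk^3$ take the five flags $0\subset V\subset W$ with $V=\langle e_1\rangle,\langle e_2\rangle,\langle e_3\rangle,\langle e_1,e_2\rangle,\langle e_2,e_3\rangle$, all adapted to the standard basis, and let $\mathcal{F}_1$ be the one with $V=\langle e_1\rangle$. The four flags with $V=\langle e_1\rangle,\langle e_2\rangle,\langle e_1,e_2\rangle,\langle e_2,e_3\rangle$ are adapted to $(e_1,e_2,e_2+e_3)$, and refining with this basis may produce $H=\langle e_1,e_2+e_3\rangle$. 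Then the three flags $\mathcal{F}_1'$, $\langle e_2\rangle$, $\langle e_3\rangle$ have no common basis: such a basis must be $\{ae_1,be_2,ce_3\}$, and $H$ is not one of its coordinate planes. Both Step 1 and Step 2 use exactly the statement ``any $m$ flags together with $\mathcal{F}_1$ are adapted to a basis in which $H$ is coordinate,'' so they are left unsupported. You also cannot simply skip the refinement. A family satisfying the hypothesis need not contain a hyperplane, or even a line (e.g.\ flags $0\subset V\subset\Bbbk^4$ with every $\dim V=2$), so producing a compatible $H$ genuinely needs an argument.

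The missing idea is to choose $H$ using the induction hypothesis. Let $U$ be the largest proper member of a nontrivial flag $\mathcal{F}_1$, and let $\pi\colon W\to W/U$ be the projection. Any at most $m$ flags together with $\mathcal{F}_1$ are adapted to a basis $\mathcal{B}$ in which $U$ is coordinate. The images of the vectors of $\mathcal{B}$ not in $U$ then form a basis of $W/U$ adapted to the projected flags. Since $\dim(W/U)+1\le m$, induction gives one basis $\bar{\mathcal{B}}$ of $W/U$ adapted to all projected flags. Let $H$ be the preimage of a coordinate hyperplane of $\bar{\mathcal{B}}$.

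Now take a subcollection $C$ of at most $m$ flags and a basis $\mathcal{B}_C$ adapted to $C\cup\{\mathcal{F}_1\}$. Let $U^c$ be the span of the vectors of $\mathcal{B}_C$ not in $U$, and replace those vectors by the lifts of $\bar{\mathcal{B}}$ into $U^c$. Each member $V$ of $C$ or $\mathcal{F}_1$ equals $(V\cap U)\oplus(V\cap U^c)$. Moreover, $\pi$ maps $V\cap U^c$ isomorphically onto $\pi(V)$, which is coordinate for $\bar{\mathcal{B}}$. So the new basis is adapted to $C$, to $\mathcal{F}_1$, and to $H$. Hence inserting $H$ into $\mathcal{F}_1$ preserves the $(m+1)$-hypothesis, and the rest of your argument goes through.

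Once repaired, yours is an elementary linear-algebra proof: induction on $\dim W$ plus a Helly lemma for affine subspaces. This is quite different from the paper's route. The paper inducts on the number of flags, realizes $n+1$ flags as Klyachko filtrations of a toric bundle on $\mathbb{P}^n$, restricts along coordinate hyperplanes, and concludes with Horrocks' criterion.

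Two minor points. In your chain argument, start from the ambient space, i.e.\ choose $A_{i_1}$ proper; otherwise the count is $d+2$ rather than $d+1$. And ``any $m$'' should read ``any at most $m$'' throughout.
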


Following Klyachko, we refer to Theorem~\ref{thm:klyachko-helly-flags} as a \textit{Helly-type theorem} for vector subspaces and flags. It serves as a direct linear algebraic analogue to the classical Helly theorem in convex geometry, which asserts that a finite family of convex sets in $\mathbb{R}^m$ has nonempty intersection if and only if every subcollection of at most $m+1$ sets has nonempty intersection.

\subsection*{Parabolic Subgroups and Toric Bundles}

To generalize this result to arbitrary algebraic groups, we reformulate the notion of flag adaptation in group-theoretic terms. Recall that a flag $\mathcal{F}_i$ in a vector space $W$ corresponds to the parabolic subgroup $P_i \subset \operatorname{GL}(W)$ that stabilizes $\mathcal{F}_i$. Moreover, the simultaneous adaptation of a collection of flags to a basis $\mathcal{B}$ is equivalent to their corresponding parabolic subgroups sharing a common maximal torus $T \subset \operatorname{GL}(W)$ (namely, the torus diagonalized by $\mathcal{B}$). 

This formulation allows us to extend Helly-type results to arbitrary connected reductive algebraic groups over $\mathbb{C}$.

\begin{theorem}[See Theorem~\ref{thm:parabolicsGen}]
\label{thm:parabolics-intro}
Let $G$ be a connected reductive algebraic group over $\mathbb{C}$ of semisimple rank $m$. A finite family of parabolic subgroups of $G$ contains a common maximal torus if and only if every subfamily of at most $m+2$ members does.
\end{theorem}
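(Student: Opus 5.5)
The plan is to pass to the spherical Tits building of $G$ and deduce the statement from a convexity (Radon/Helly-type) argument in an apartment, which is a real vector space of dimension $m$.

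\emph{Reduction.} Every maximal torus contains the connected center $Z(G)^\circ$, and parabolic subgroups are in bijection with those of $G/Z(G)^\circ$. So I may assume $G$ is semisimple (even adjoint) of rank $m$. Necessity in the statement is trivial, so only sufficiency needs proof, and for it I argue by induction on the size $n$ of the family. For $n\le m+2$ there is nothing to prove. Assume $n\ge m+3$ and that every proper subfamily has a common maximal torus.

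\emph{Dictionary.} Write each $P_i=P(\lambda_i)$ for a cocharacter $\lambda_i$. Then $P_i$ contains a maximal torus $T$ iff $\lambda_i$ can be chosen in $X_*(T)$. In building language: $P_i$ corresponds to a simplex $\sigma_i$; $T$ corresponds to an apartment $\Sigma_T\cong S^{m-1}\subset X_*(T)\otimes\mathbb R\cong\mathbb R^m$; and $T\subset P_i$ iff $\sigma_i\subset\Sigma_T$. The statement thus becomes: finitely many simplices lie in a common apartment iff every $m+2$ of them do.

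I would use two standard facts.
\begin{enumerate}
\item Any two simplices lie in a common apartment (the Bruhat decomposition).
\item If $\lambda,\mu\in X_*(T)$, then
\[
P(\lambda)\cap P(\mu)=T\cdot\langle U_\alpha : \langle\alpha,\lambda\rangle\ge0,\ \langle\alpha,\mu\rangle\ge 0\rangle \subseteq P(a\lambda+b\mu)\quad\text{for all } a,b\ge 0.
\]
Moreover, every maximal torus of $P(\lambda)\cap P(\mu)$ contains a conjugate of $a\lambda+b\mu$ under the unipotent radical of the intersection.
\end{enumerate}
The second fact lets me replace pairs, and more generally finite nonnegative combinations of cocharacters lying in a common torus, by single parabolics without losing information about which tori are common.

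\emph{Key step (Radon-type reduction).} Using the induction hypothesis, pick a maximal torus $T_1\subset P_2\cap\dots\cap P_n$, and realize $\lambda_2,\dots,\lambda_n$ in $X_*(T_1)\otimes\mathbb R\cong\mathbb R^m$. These are $n-1\ge m+2$ vectors in $\mathbb R^m$, so they admit a nontrivial linear dependence. The aim is to use it to group the indices $2,\dots,n$ into two nonempty parts $I,J$ with $\sum_{i\in I}a_i\lambda_i$ and $\sum_{j\in J}b_j\lambda_j$ in a controlled relative position, possibly with $\nu_I=-\nu_J$. Set $\nu_I=\sum_{i\in I}a_i\lambda_i$ and $Q_I=P(\nu_I)$, and similarly $Q_J=P(\nu_J)$. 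I will then show that the original family has a common torus iff the smaller family $\{P_1, Q_I, Q_J\}\cup\{P_k\}$, with $k$ ranging over a suitable subset, does. That family has at most $n-1$ members, so the induction hypothesis applies.

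The justification is that, by fact (2), a torus contained in all $P_i$ with $i\in I$ lies in $Q_I$ and can be moved inside $\bigcap_{i\in I}P_i$ by its unipotent radical. The extra torus from the subfamily omitting some index of $I$ (respectively $J$) is used to undo this move. This is where the count $m+2$, rather than the $m+1$ of Klyachko's Theorem~\ref{thm:klyachko-helly-flags}, enters. The linear span of the cocharacters in $\mathbb R^m$ gives the Radon-type count $m+1$. The extra $+1$ is needed because tori in an intersection of parabolics are only unique up to a unipotent group, which fact (1) controls in the pairwise case.

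\emph{Main obstacle.} I expect the hardest part to be the key step: showing that the torus found for the reduced family can actually be conjugated, by an element of $\bigcap_{i\in I}P_i\cap\bigcap_{j\in J}P_j$, into a torus lying in each original $P_i$. I would first attack it in the building, using convexity of $\bigcap\sigma_i$-type subcomplexes in CAT(1) geometry together with the retraction onto an apartment centered at a chamber containing $\sigma_1$. I would check the argument first in type $A$, where it should reproduce Klyachko's Theorem~\ref{thm:klyachko-helly-flags}, before writing the general root-theoretic version.
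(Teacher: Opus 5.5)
There is a genuine gap, and it is the key step itself. Your reduction to the semisimple case, the building dictionary, and fact (2) are all fine. The trouble is that the key step, as you set it up, carries the entire content of the theorem and gives no mechanism for it.

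Fact (2) needs nonnegative coefficients. So a linear dependence among $\lambda_2,\dots,\lambda_n$ can only give you one of two things: $\nu_I=\nu_J$ (split a mixed-sign dependence by sign, as in Radon), or $\nu_I=-\nu_J$ (split a one-signed dependence into two parts).

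\emph{First case, $\nu_I=\nu_J$.} Then $Q_I=Q_J$. For any $i\in I$, the torus $T_i\subseteq\bigcap_{j\neq i}P_j$ supplied by the induction hypothesis already lies in $P_1$, in every $P_k$, and in $\bigcap_{j\in J}P_j\subseteq Q_J=Q_I$. So the reduced family is compatible for free. Your claimed equivalence is then exactly as strong as the theorem, and all of its content sits in the reverse direction: a torus in $P_1\cap Q_I\cap\bigcap_k P_k$ should yield one in every $P_i$. You do not prove this.

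\emph{Second case, $\nu_I=-\nu_J$.} Here the replacement genuinely loses information. In $\mathrm{PGL}_3$, let $P_2,P_3,P_4$ be the stabilizers of the three coordinate points of $\mathbb P^2$, so $\lambda_2+\lambda_3+\lambda_4=0$. Taking $I=\{2,3\}$ replaces the two points by the line through them. That line's stabilizer contains tori whose vertices on the line are arbitrary. Your sentence about the extra torus being ``used to undo this move'' is precisely the local-to-global step, and it is never made precise.

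In the second case the induction also fails to close at the threshold. To shrink the family you need $|I|+|J|\geq 3$, so for $n=m+3$ the reduced family has at most $m+2$ members. For such a family the hypothesis of the theorem is its conclusion, so ``the induction hypothesis applies'' is circular. One can check, using $T_1$, the $T_k$, and $T_i$, $T_j$ together with fact (2), that every proper subfamily of the reduced family is compatible. Below the threshold that is not enough, because minimally incompatible families of $m+2$ parabolics exist (e.g.\ $m+2$ general lines in $\mathbb C^{m+1}$).

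What is missing is a global rigidity input. In the paper this is Horrocks' criterion, used as follows.
\begin{enumerate}
\item A family all of whose proper subfamilies are compatible is realized as the ray parabolics of a toric principal $G$-bundle on $\mathbb P^n$ (Lemma~\ref{lem:toric-construction}).
\item Restricting to a coordinate hyperplane and reducing to the centralizer of a noncentral fiber cocharacter lowers the semisimple rank (Lemma~\ref{lem:centralizer_restriction}).
\item After at most $m$ such steps one reaches a split reduction on a coordinate $\mathbb P^{n-k}$ with $n-k\geq 2$.
\item Lemma~\ref{lem:principal_horrocks} propagates the splitting to all of $\mathbb P^n$, which gives a common maximal torus.
\end{enumerate}
The number $m+2$ is exactly $m$ rank drops plus the two dimensions Horrocks needs. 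Your sketch has no substitute for this step.
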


To establish Theorem~\ref{thm:parabolics-intro}, we begin by giving a geometric proof of the corresponding statement for $\mathrm{GL}(W)$, recovering the $(m+1)$-threshold by recasting the argument of \cite[Theorem~6.2.1]{Klyachko}.

More precisely, given a family of $n+1$ flags in $W$, we associate these flags with the $n+1$ rays in the toric fan of $\mathbb{P}^n$. When every proper subfamily admits a common adapted basis, the resulting filtrations satisfy Klyachko's compatibility condition and thus define a toric vector bundle of rank $\dim (W)$ on $\mathbb{P}^n$. An equivariant splitting of this bundle is equivalent to the simultaneous adaptation of all $n+1$ flags.  

This formulation extends the argument from general linear groups to connected reductive groups by replacing vector bundles with equivariant principal $G$-bundles on $\mathbb P^n$.

\subsection*{Helly Numbers}

Theorem~\ref{thm:parabolics-intro} provides an upper bound on the Helly number $h(G)$ (introduced in \cite{HK}), defined as the minimal positive integer such that a finite family of parabolic subgroups of $G$ shares a common maximal torus whenever every subfamily of size at most $h(G)$ does.

For an almost simple algebraic group $G$ of semisimple rank $m$, we prove that $h(G) = m + 2$ when $G$ is of Lie type $A_m$, $B_m$, $C_m$, $G_2$, $F_4$, $E_7$, or $E_8$. For the remaining types, we obtain
\[
  m+1 \le h(G) \le m+2 \quad (\text{type } D_m, \, m \ge 4), \qquad 7 \le h(G) \le 8 \quad (\text{type } E_6)
\]
(see Corollary~\ref{cor:helly-values-by-type}). These lower bounds are established via explicit geometric configurations in classical groups and embeddings of reductive subgroups in exceptional groups.

Furthermore, we show that if $G$ has positive semisimple rank and $G_1, \dots, G_t$ denote the almost simple factors of the simply connected cover of its derived subgroup $G^{\mathrm{der}}$, then
\[
  h(G) = \max_{1 \le i \le t} h(G_i)
\]
(see Proposition~\ref{prop:helly-central-products}). Thus, the general upper bound in Theorem~\ref{thm:parabolics-intro} can be sharpened to $2 + \max\limits_{1 \le i \le t} \operatorname{rank}(G_i)$, demonstrating that the optimal threshold is governed by the maximal rank among the irreducible simple factors rather than the total semisimple rank of $G$.

\subsection*{Equivariant Splitting and Helly-Type Thresholds}

While the equivalence between the category of toric vector bundles on $X_\Sigma$ and the category of finite-dimensional vector spaces equipped with collections of compatible decreasing filtrations for each ray is used to establish our main result (Theorem~\ref{thm:parabolics-intro}), this correspondence can also be leveraged to establish converse implications and yield complete criteria for the equivariant splitting of toric principal $G$-bundles on smooth toric varieties—that is, the existence of an equivariant reduction of the structure group to a maximal torus $T \subset G$.

Specifically, let $X_\Sigma$ be a smooth toric variety and set $s = |\Sigma(1)|$ to be the number of rays in $\Sigma$. If $s \ge 3$, every toric principal $G$-bundle on $X_\Sigma$ splits equivariantly if and only if every collection of $\min\{s, h(G)\}$ distinct rays in $\Sigma(1)$ is contained in a cone of $\Sigma$. If $s \le 2$, all such bundles split automatically. In particular, this implies that for $d \ge 2$, every toric principal $G$-bundle on $\mathbb{P}^d$ splits equivariantly if and only if $d \ge h(G)$, whereas on $\mathbb{P}^1$ all such bundles split (see Corollary~\ref{cor:exact-projective-splitting}).
We also obtain analogues of Klyachko's dimension criteria (Corollary~\ref{cor:klyachko-dimension-principal}). Let $X_\Sigma$ be a smooth $n$-dimensional toric variety with $|\Sigma(1)| \ge 3$. When $n = h(G) - 1$, every toric principal $G$-bundle on $X_\Sigma$ splits equivariantly if and only if $X_\Sigma$ is affine. Similarly, when $n = h(G)$, every such bundle splits equivariantly if and only if $X_\Sigma$ is affine or torically isomorphic to $\mathbb{P}^n$.

A deeper interplay between the filtration-theoretic and bundle-theoretic perspectives yields sharper bounds when restricting to specific parabolic types. If the Levi factors of the parabolics under consideration have semisimple rank at most $\ell$, testing subfamilies of at most $\ell + 3$ members suffices (see Proposition~\ref{prop:levi-rank-helly}). In particular, $3$ is the optimal Helly threshold for Borel subgroups whenever $G$ has positive semisimple rank. For partial flags in $\mathbb{C}^m$ with at least $s$ nonzero successive quotients (from $0$ to $\mathbb{C}^m$), we obtain the optimal threshold

$$m - s + 3, \qquad 2 \le s \le m.$$

This result bridges the two extreme cases, shifting from $m + 1$ for single subspaces ($s = 2$) down to $3$ for complete flags ($s = m$) (see Corollary \ref{cor:partial-flag-length-helly}).

\subsection*{Helly Numbers for Spherical Buildings}

Finally, we interpret the main results in the language of buildings. In the spherical building of $G$, parabolic subgroups correspond to simplices, and sharing a maximal torus is equivalent to lying in a common apartment. For a connected reductive complex algebraic group $G$ of positive semisimple rank $m$, its apartment Helly number (see Definition~\ref{defn:apartment-helly}) is therefore equal to $h(G)$ (see Corollary~\ref{cor:spherical-building-helly}).

\subsection*{Organization of the Paper}
The remainder of this paper is organized as follows. In Section~\ref{sec:review}, we review Klyachko's classification of toric vector bundles and present our bundle-theoretic proof of the flag adaptation theorem for $\operatorname{GL}(W)$. Section~\ref{sec:reductive_groups} extends the restriction argument to connected reductive groups, establishing the semisimple-rank bound for their Helly numbers. Section~\ref{subsec:helly-by-type} investigates the optimal Helly number, computes it for several almost simple types, and analyzes its behavior under central isogenies and direct products. Section~\ref{sec:impl_for_toric_G-bundles} derives splitting criteria for toric principal bundles along with sharper Helly bounds for prescribed parabolic and flag types.  Finally, Section~\ref{sec:buildings} is devoted to interpretation of these results within the framework of spherical buildings, determining the corresponding apartment Helly numbers.

\subsubsection*{Acknowledgement.}
I am grateful to Kiumars Kaveh for introducing me to the subject and to the paper \cite{Klyachko}, which served as both a source of knowledge and inspiration. I am also indebted to Kiumars Kaveh and Christopher Manon for their interest in this work and for many fruitful discussions.
 
\section{The \texorpdfstring{$\operatorname{GL}(V)$}{GL(V)} Case: Review and Geometric Proof}
\label{sec:review}

Throughout this section, let $\Bbbk$ be an algebraically closed field and $V$ an $m$-dimensional vector space over $\Bbbk$. A collection of subspaces $\{W_1, W_2, \dots, W_n\}$ of $V$ is said to be \emph{adapted} to a basis $\mathcal{B}$ of $V$ if each subspace $W_i$ is spanned by a subset of $\mathcal{B}$. A fundamental result of Klyachko \cite[Theorem~6.1.2]{Klyachko} establishes a local-to-global criterion for subspace adaptation:

\begin{theorem}
\label{thm:Klyachko}
Let $\mathcal{S} = \{W_1, \dots, W_n\}$ with $n\ge m+1$ be a family of subspaces in an $m$-dimensional vector space $V$. The collection $\mathcal{S}$ is adapted to a common basis of $V$ if and only if every subcollection of $m+1$ subspaces from $\mathcal{S}$ is adapted to some basis of $V$.
\end{theorem}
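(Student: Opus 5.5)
The plan is to prove the nontrivial direction (``if'') by induction on $n$, and to do the inductive step geometrically, by building a toric vector bundle on a projective space whose equivariant splitting is exactly the required common basis. Necessity is clear, since a basis adapted to $\mathcal{S}$ is adapted to every subcollection. The base case $n=m+1$ is the hypothesis. For $n\ge m+2$, every subcollection of $n-1$ subspaces has $n-1\ge m+1$ members, and each of its own $(m+1)$-element subcollections is adapted to some basis. So by the induction hypothesis every $(n-1)$-element subfamily of $\mathcal{S}$ is adapted to some basis of $V$. It therefore suffices to prove the following: \emph{if $n\ge m+2$ and every proper subfamily of $\{W_1,\dots,W_n\}$ is adapted, then the whole family is adapted.}

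To prove this, set $d=n-1\ge m+1$ and consider the fan of $\mathbb{P}^{d}$, with rays $\rho_1,\dots,\rho_n$. Its maximal cones are spanned by the sets of $d=n-1$ rays obtained by omitting one ray. To the ray $\rho_i$ attach the decreasing filtration
\[
E^{\rho_i}(j)=V \ (j\le 0),\qquad E^{\rho_i}(1)=W_i,\qquad E^{\rho_i}(j)=0 \ (j\ge 2).
\]
Klyachko's compatibility condition requires, for each maximal cone $\sigma$, a splitting of $V$ into $T$-eigenlines compatible with all filtrations indexed by the rays of $\sigma$. This is exactly the statement that the $n-1$ subspaces indexed by $\sigma(1)$ are adapted to a common basis, which we have just established. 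Klyachko's classification then produces a toric vector bundle $\mathcal{E}$ of rank $m$ on $\mathbb{P}^{d}$. Moreover, $\mathcal{E}$ splits equivariantly into a sum of toric line bundles if and only if there is a single basis of $V$ whose lines are compatible with all $n$ filtrations simultaneously. That condition says precisely that $W_1,\dots,W_n$ are adapted to a common basis.

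The heart of the argument, and the step I expect to be the main obstacle, is therefore the splitting statement: \emph{every toric vector bundle of rank $m<d$ on $\mathbb{P}^{d}$ splits equivariantly}. This is Kaneyama's theorem, and I would first try to invoke it directly. To keep the argument self-contained, I would instead argue inside the filtration data, by induction on $m$.

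First, I would look for a line $L\subset V$ that is compatible with all filtrations. It must lie in, or be complementary to, each $W_i$ in a way that is compatible with the adapted bases on every cone. The natural candidate is a line in an intersection of the form $\bigcap_{i\in I}W_i$ that meets $\sum_{i\notin I}W_i$ trivially. The dimension count $m<d=n-1$ is what should force such a line to exist: there are more rays than dimensions, so some pair of maximal cones must ``see'' the same eigenline.

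Second, given such a line $L$, I would split off the corresponding line subbundle and a complement. This reduces to the rank-$(m-1)$ case on the same $\mathbb{P}^{d}$, with the induced subspaces $W_i\cap L'$ for a suitable complement $L'$.

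Locating $L$ and checking that the complement is again compatible on every cone is the delicate part. If it resists a direct proof, the fallback is to cite Kaneyama's classification of equivariant bundles on $\mathbb{P}^d$ of rank less than $d$.
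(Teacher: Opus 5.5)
Your proof is correct if you take the first option and quote Kaneyama's theorem that toric vector bundles of rank less than $d$ on $\mathbb{P}^d$ split. If the version you quote only gives an abstract splitting, add Klyachko's criterion that a split toric bundle splits equivariantly; that is what yields the common basis.

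The skeleton matches the paper's geometric proof of Theorem~\ref{thm:parabolics}, of which this statement is the case of two-step flags $0\subseteq W_i\subseteq V$. In both, you reduce by induction to a family whose proper subfamilies are all adapted, attach one filtration to each ray of $\mathbb{P}^{d}$ with $d=n-1$, get cone-wise compatibility from the inductive hypothesis, and use that equivariant splitting is equivalent to a common basis.

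The difference is the splitting step. The paper does not quote Kaneyama but proves the needed case itself. It restricts along a chain of coordinate linear subspaces, each time refining the current summands into weight subbundles of the one-parameter subgroup fixing the chosen hyperplane. The number of summands strictly increases, so after at most $m-1$ steps the restriction to some coordinate $\mathbb{P}^{d-j}$ with $d-j\geq d-m+1\geq 2$ splits (either all summands are lines or Lemma~\ref{lem:scalar-boundary} applies). Horrocks' criterion and Klyachko's equivariant splitting criterion then finish. Your citation is shorter but outsources the entire content. The paper's argument is in effect a proof of the relevant case of Kaneyama's theorem. Its payoff is that it transfers to principal $G$-bundles, by tracking semisimple ranks of centralizers instead of numbers of summands, which gives the bound $m+2$ of Theorem~\ref{thm:parabolicsGen}; the vector bundle statement alone does not give this.

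The self-contained alternative you sketch is not a proof as it stands. A line $L$ with $L\subseteq\bigcap_{i\in I}W_i$ and $L\cap\sum_{i\notin I}W_i=0$ exists precisely when the family is adapted. One direction is trivial, and the other is your induction on $m$. So its existence is the conclusion itself, not an intermediate lemma, and the ``more rays than dimensions'' heuristic does not supply it.

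A line occurring in the adapted bases for the cones omitting $W_k$ and $W_l$ only satisfies $L\cap\sum_{i\notin I,\,i\neq k}W_i=0$ and the same with $l$. That is not the global condition. Take $\langle e_1\rangle,\langle e_2\rangle,\langle e_1+e_2\rangle\subset\mathbb{C}^2$, the Klyachko data of the tangent bundle of $\mathbb{P}^2$. The bases for two cones share $e_1$, yet there is no common basis. This example has $m=d$, so sharing an eigenline between cones cannot be the mechanism by which $m<d$ is used. In the paper that hypothesis enters through Horrocks' cohomological criterion on a plane, not through a count.

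By contrast, your second step is fine once $L$ exists. Then $V=L\oplus L'$ splits every sum and intersection of the $W_i$. From this one checks that adjoining $L$ and $L'$ to any adapted subfamily keeps it adapted. Hence proper subfamilies of $\{W_i\cap L'\}$ are adapted in $L'$, and the induction on $m$ runs. The only missing ingredient is the existence of $L$, which is exactly what Kaneyama's theorem, or the paper's restriction-plus-Horrocks argument, provides.
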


Following Klyachko, we refer to this theorem as a Helly-type theorem for subspaces of a vector space. It is similar, in spirit, to the classical Helly theorem which states that given a finite collection of convex subsets in $\R^m$, if any $m+1$ convex sets in this collection have nonempty intersection then all of the sets in the collection have nonempty intersection.

By interpreting flags through their stabilizers in the general linear group $\operatorname{GL}(V)$, Theorem~\ref{thm:Klyachko} admits an equivalent formulation in the language of algebraic groups. Specifically, a choice of basis $\mathcal{B}$ for $V$ uniquely determines a maximal torus $T \subset \operatorname{GL}(V)$ consisting of operators diagonal with respect to $\mathcal{B}$. A flag $\mathcal{F}$ in $V$ is adapted to $\mathcal{B}$ if and only if its stabilizer $P = \operatorname{Stab}_{\operatorname{GL}(V)}(\mathcal{F})$ contains $T$. Since the stabilizer of any flag in $\operatorname{GL}(V)$ is a parabolic subgroup, Klyachko's theorem can be reformulated in terms of parabolic subgroups.

To extend the statement from individual subspaces to full flags, we consider the collection formed by taking the union of the subspaces comprising these flags. As any subfamily containing at most $m+1$ such subspaces originates from at most $m+1$ distinct flags, Theorem~\ref{thm:Klyachko} applies directly to this union.

\begin{theorem}
\label{thm:parabolics}
Let \(V\) be an \(m\)-dimensional vector space over an algebraically closed field \(\Bbbk\), with \(m\geq 1\), and let \(\{P_\alpha\}_{\alpha\in A}\) be a finite family of parabolic subgroups of \(\operatorname{GL}(V)\). Then \(\bigcap\limits_{\alpha\in A}P_\alpha\) contains a maximal torus if and only if the intersection of every subfamily of at most \(m+1\) members contains a maximal torus.
\end{theorem}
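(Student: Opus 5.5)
The plan is to reduce the statement to Theorem~\ref{thm:Klyachko}, exactly as the paragraph preceding the statement suggests. Necessity is trivial: if $T\subset\bigcap_{\alpha\in A}P_\alpha$ is a maximal torus, then $T$ lies in the intersection of every subfamily. For sufficiency, I would first record the dictionary between parabolics and flags. Every parabolic subgroup $P\subset\operatorname{GL}(V)$ is the stabilizer of a unique flag $\mathcal{F}(P)$ in $V$: take a Borel subgroup inside $P$, which stabilizes a complete flag, and keep the subspaces of that flag stabilized by $P$; equivalently, the $P$-stable subspaces form a chain. A maximal torus $T$ corresponds to a basis $\mathcal{B}$, determined up to scaling and permutation, namely the common eigenbasis of $T$. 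Then $T\subset P=\operatorname{Stab}(\mathcal{F})$ holds if and only if each subspace of $\mathcal{F}$ is $T$-stable. Since the $T$-weights on $V$ are distinct, a subspace is $T$-stable if and only if it is a sum of eigenlines, that is, spanned by a subset of $\mathcal{B}$. Hence $T\subset P$ if and only if $\mathcal{F}(P)$ is adapted to $\mathcal{B}$.

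Next, let $\mathcal{S}$ be the finite set of all subspaces occurring in the flags $\mathcal{F}(P_\alpha)$ for $\alpha\in A$. I would discard the trivial subspaces $0$ and $V$, which are adapted to every basis. Take any subcollection of $\mathcal{S}$ with at most $m+1$ members. Each member lies in some flag $\mathcal{F}(P_\alpha)$, so the subcollection involves at most $m+1$ indices $\alpha$. By hypothesis, the intersection of these $P_\alpha$ contains a maximal torus, so by the dictionary all their flags, and in particular the chosen subspaces, are adapted to one basis.

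If $|\mathcal{S}|\ge m+1$, Theorem~\ref{thm:Klyachko} now gives a single basis $\mathcal{B}$ adapted to all of $\mathcal{S}$. If $|\mathcal{S}|\le m$, the whole of $\mathcal{S}$ is already a subcollection of at most $m+1$ members, and the same argument applies directly. Either way, every flag $\mathcal{F}(P_\alpha)$ is adapted to $\mathcal{B}$, so the diagonal torus of $\mathcal{B}$ lies in every $P_\alpha$, hence in $\bigcap_{\alpha\in A}P_\alpha$. This proves sufficiency.

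The only real content is the correspondence between parabolics and flags and the tori criterion. The step needing some care is showing that $T$-stable subspaces are exactly the coordinate subspaces of $\mathcal{B}$. This uses that the maximal torus of $\operatorname{GL}(V)$ acts on the basis lines by pairwise distinct characters, so a $T$-stable subspace decomposes into weight spaces, each of which is a line.
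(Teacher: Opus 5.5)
Your argument is correct, but it is not the proof the paper gives. You use the union-of-subspaces reduction to Theorem~\ref{thm:Klyachko}, which the paper only mentions in passing just before the statement.

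The paper's actual proof is geometric. It inducts on $|A|$ and places the $n+1\ge m+2$ flags as Klyachko filtrations on the rays of $\mathbb{P}^n$; compatibility on each maximal cone comes from the inductive hypothesis. This produces a rank-$m$ toric vector bundle. The bundle is then restricted at most $m-1$ times to coordinate hyperplanes, each time refined into weight subbundles of the one-parameter subtorus fixing that hyperplane. The process stops on some $X_k\cong\mathbb{P}^{n-k}$ with $n-k\ge 2$, where every summand splits into line bundles, either via Lemma~\ref{lem:scalar-boundary} or because all summands have rank one. Horrocks' criterion and Klyachko's equivariant splitting criterion then split the bundle on all of $\mathbb{P}^n$, which yields a common adapted basis.

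Your route is much shorter and elementary once Theorem~\ref{thm:Klyachko} is granted. Your dictionary is exactly what the reduction needs: parabolic $\leftrightarrow$ flag, maximal torus $\leftrightarrow$ eigenbasis, and $T$-stable subspaces are coordinate subspaces because the weights on the eigenlines are distinct. You also correctly handle the case $|\mathcal{S}|\le m$ separately.

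Note, however, that your argument moves all the substance into the cited theorem. The stabilizer of a single subspace is itself a parabolic, so Theorem~\ref{thm:Klyachko} is a special case of the statement, and the two results are equivalent. The paper's proof instead derives the result from Klyachko's classification and Horrocks' criterion without assuming the subspace Helly theorem. Its mechanism is to restrict to a divisor, reduce the structure group to the centralizer of a noncentral boundary cocharacter, and lose at least one unit of semisimple rank each time (Remark~\ref{rmk:gl_centralizers}). That mechanism carries over to arbitrary connected reductive $G$ in Theorem~\ref{thm:parabolicsGen}. Your flag-to-subspaces reduction is tied to $\operatorname{GL}(V)$, where parabolics are determined by finitely many subspaces and a ready-made subspace Helly theorem is available.
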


We give a geometric proof of Theorem \ref{thm:parabolics} using Klyachko's classification of toric vector bundles and Horrocks' splitting criterion. While the original proof presented in \cite{Klyachko} is framed differently, the core argument proposed here closely follows Step 3 of the proof of Theorem $6.2.1$ therein. This approach admits a natural generalization to the setting of connected reductive complex algebraic groups, as explored in subsequent sections.

\subsection{Toric vector bundles and Klyachko filtrations}
\label{subsec:klyachko_setup}

We start with a brief review of the foundational setup (for a more detailed exposition, we refer the reader to \cite{Klyachko,Kaneyama} as well as Section 4 in \cite{KS}).

Let $T \cong \mathbb{G}_m^n$ denote an $n$-dimensional algebraic torus over an algebraically closed field $\Bbbk$. Let 
\begin{equation}
    \label{eq:cochar_lattice}
    N = \operatorname{Hom}(\mathbb{G}_m, T)
\end{equation}
be its lattice of one-parameter subgroups, which is dual to the character lattice
\[
M = \operatorname{Hom}(T, \mathbb{G}_m),
\]
under the canonical pairing
\[
\langle\cdot,\cdot\rangle \colon M \times N \longrightarrow \mathbb{Z}.
\]
Recall that a normal toric variety $X$ is an irreducible normal algebraic variety containing $T$ as a dense open orbit, such that the action of $T$ on itself extends to an action on $X$. Up to $T$-equivariant isomorphism, $X$ is uniquely determined by a finite rational polyhedral fan $\Sigma$ in $N_{\mathbb{R}} \coloneqq N \otimes_{\mathbb{Z}} \mathbb{R}$, motivating the notation $X_\Sigma$.

\begin{definition}
A \emph{toric vector bundle} on a toric variety $X_\Sigma$ with acting torus $T$ is a vector bundle $\pi \colon \mathcal{E} \to X_\Sigma$ equipped with a $T$-action on the total space $\mathcal{E}$ that is linear on fibers and $T$-equivariant with respect to $\pi$. That is, for every $t \in T$, the maps on fibers $\mathcal{E}_x \to \mathcal{E}_{t \cdot x}$ are linear, and the following diagram commutes:

\[
\begin{tikzcd}[row sep=large, column sep=large]
 \mathcal{E} \arrow[r, "t"] \arrow[d, "\pi"'] & \mathcal{E} \arrow[d, "\pi"] \\
 X_\Sigma \arrow[r, "t"'] & X_\Sigma
\end{tikzcd}
\]    
\end{definition}

We now recall Klyachko's classification of toric vector bundles \cite{Klyachko}. Fix a base point $x_0 \in T \subset X_\Sigma$ (representing the identity element of $T$) in the open torus orbit and let
\begin{equation}
\label{eq:toric_fiber}
    E \coloneqq \mathcal{E}_{x_0}
\end{equation}
be the fiber over $x_0$.
For each cone $\sigma \in \Sigma$, let $X_\sigma \subset X_\Sigma$ denote the corresponding $T$-invariant open affine chart. We use the action on sections given by
\[
  (t\cdot s)(x)=t\cdot s(t^{-1}x).
\]
Thus the character function $\chi^{u'}$ has weight $-u'$. The space of sections $\Gamma(X_\sigma, \mathcal{E})$ is a $T$-module, which decomposes into weight spaces:
\begin{equation}
\label{eq:weight_space_decomposition}
    \Gamma(X_\sigma, \mathcal{E}) = \bigoplus_{u \in M} \Gamma(X_\sigma, \mathcal{E})_u,
\end{equation}
where $\Gamma(X_\sigma, \mathcal{E})_u$ denotes the weight space corresponding to the character $u \in M$. Because $x_0$ lies in the dense torus orbit, every weight section in $\Gamma(X_\sigma, \mathcal{E})_u$ is uniquely determined by its value at $x_0$. Indeed, if $s\in\Gamma(X_\sigma,\mathcal E)_u$, then
\[
  s(tx_0)=\chi^{-u}(t)\,t\cdot s(x_0)
  \qquad (t\in T).
\]
Thus a weight section vanishing at $x_0$ vanishes on the
dense open orbit and hence everywhere. Evaluation at $x_0$
is therefore injective on each weight space. Restricting sections to the fiber $E = \mathcal{E}_{x_0}$ thus yields an injective evaluation map:
\begin{equation}
\label{eq:evaluation_embedding}
    \mathrm{ev}_{x_0} \colon \Gamma(X_\sigma, \mathcal{E})_u \hookrightarrow E.
\end{equation}
We denote the image of $\Gamma(X_\sigma, \mathcal{E})_u$ under $\mathrm{ev}_{x_0}$ in $E$ by 

\begin{equation}
\label{eq:evaluation_embedding_image}
E_u^\sigma:=\mathrm{ev}_{x_0}(\Gamma(X_\sigma, \mathcal{E})_u) \subseteq E.
\end{equation}

For any character $u' \in \sigma^\vee \cap M$, the character function $\chi^{u'}$ is regular on $X_\sigma$. Multiplication by $\chi^{u'}$ induces an injection of section spaces $\Gamma(X_\sigma, \mathcal{E})_u \hookrightarrow \Gamma(X_\sigma, \mathcal{E})_{u - u'}$. Since character multiplication commutes with evaluation at $x_0$, this induces a subspace inclusion in the fiber:
\begin{equation}
\label{eq:subspace_inclusion}
 E^\sigma_u\subseteq E^\sigma_{u-u'}
  \qquad (u'\in\sigma^\vee\cap M).
\end{equation}

If $u' \in \sigma^\perp \cap M$, the character $\chi^{u'}$ is invertible on $X_\sigma$, and the inclusion becomes an equality $E_u^\sigma = E_{u + u'}^\sigma$. Consequently, the subspace $E_u^\sigma$ depends only on the residue class $[u] \in M_\sigma \coloneqq M / (\sigma^\perp \cap M)$.

For every ray $\rho \in \Sigma(1)$ with primitive generator $v_\rho \in N$, the quotient $M_\rho = M / (\rho^\perp \cap M) \cong \mathbb{Z}$ is naturally identified with $\mathbb{Z}$ via the evaluation pairing $\langle \cdot, v_\rho \rangle$. This yields a decreasing $\mathbb{Z}$-filtration on $E$:
\begin{equation}
\label{eq:ray_filtration}
    \cdots \supseteq E^\rho(i-1) \supseteq E^\rho(i) \supseteq E^\rho(i+1) \supseteq \cdots,
\end{equation}
where $E^\rho(i) \coloneqq E_u^\rho$ for any $u \in M$ with $\langle u, v_\rho \rangle = i$. The filtration is bounded, satisfying
\begin{equation}
\label{eq:filtration_bounds}
    E^\rho(i) = E \quad \text{for } i \ll 0, \qquad \text{and} \qquad E^\rho(i) = 0 \quad \text{for } i \gg 0.
\end{equation}

The required compatibility condition is as follows. For every cone $\sigma\in\Sigma$, put $M_\sigma=M/(\sigma^\perp\cap M)$. There is a decomposition with finite support
\[
  E=\bigoplus_{w\in M_\sigma}E_{\sigma,w}
\]
such that, for every $\rho\in\sigma(1)$,
\[
  E^\rho(i)=
  \bigoplus_{\substack{w\in M_\sigma\\
                      \langle w,v_\rho\rangle\ge i}}
  E_{\sigma,w}.
\]
Equivalently, for $u\in M$,
\[
  E^\sigma_u=
  \bigoplus_{\substack{w\in M_\sigma\\
    \langle w-[u],v_\rho\rangle\ge0\ \text{for all }\rho\in\sigma(1)}}
  E_{\sigma,w}.
\]
The pairings are well defined because every element of
$\sigma^\perp\cap M$ vanishes on the linear span of $\sigma$.

Thus, on each cone, all of the filtrations corresponding to its rays admit a simultaneous splitting into weight spaces.

Klyachko's theorem states that the category of toric vector bundles on $X_\Sigma$ is naturally equivalent to the category whose objects are finite-dimensional vector spaces $E$ equipped with collections of decreasing filtrations

$$
\bigl\{E^\rho(i)\bigr\}_{\rho\in\Sigma(1),\,i\in\mathbb Z}
$$

satisfying this compatibility condition. A morphism of Klyachko data is a linear map

$$
f:E\longrightarrow E'
$$

such that

$$
f\bigl(E^\rho(i)\bigr)\subseteq E'^\rho(i)
$$

for every $\rho\in\Sigma(1)$ and every $i\in\mathbb Z$. In particular, two toric vector bundles are equivariantly isomorphic if and only if their Klyachko data are isomorphic.

\begin{example}
    For a line bundle $\mathcal L$, the fiber $L=\mathcal L_{x_0}$ is one-dimensional, and each filtration has the form

$$
L^\rho(i)
=
\begin{cases}
L,& i\leq a_\rho,\\
0,& i>a_\rho,
\end{cases}
$$

for integers $a_\rho$. Thus a toric line bundle is precisely the rank-one case of Klyachko's classification: its filtration data are given by one integer attached to each ray, subject to the usual Cartier compatibility condition.
\end{example}

\subsection{Splitting Criterion}

We first record a stopping criterion for the restriction argument developed below. It identifies when every ray filtration has a single jump, forcing the bundle to decompose as a direct sum of isomorphic toric line bundles.

For each ray $\rho \in \Sigma(1)$, let $v_\rho \in N$ denote its primitive lattice generator, and let $\lambda_\rho \colon \mathbb{G}_m \to T$ be the associated cocharacter. We write $S_\rho := \lambda_\rho(\mathbb{G}_m) \subset T$ for the corresponding $1$-parameter subtorus, and $D_\rho \subset \mathbb{P}^d$ for its associated torus-invariant divisor.

\begin{lem}
\label{lem:scalar-boundary}
Let $\mathcal{E}$ be a toric vector bundle of rank $r \ge 1$ on $\mathbb{P}^d$ with $d \ge 1$. If for every ray $\rho \in \Sigma(1)$, the subtorus $S_\rho$ acts on the restriction $\mathcal{E}|_{D_\rho}$ via a single character, then $\mathcal{E}$ splits equivariantly as

\begin{equation}
    \label{eq:bundle_iso}
    \mathcal E\simeq\mathcal L^{\oplus r}
\end{equation}
  
for some toric line bundle $\mathcal{L}$ on $\mathbb{P}^d$.
\end{lem}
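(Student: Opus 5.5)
We translate the hypothesis into Klyachko's filtration language and show that every ray filtration has a single jump; the result then follows because the bundle is a direct sum of copies of one line bundle.

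\textbf{Step 1: the hypothesis forces a single jump.} For a ray $\rho$ with generator $v_\rho$, the subtorus $S_\rho$ fixes $D_\rho$ pointwise. Work on the affine chart $X_\rho$, which equivariantly is $\mathbb{A}^1 \times (\mathbb{G}_m)^{d-1}$, and use the compatibility decomposition $E=\bigoplus_{w\in M_\rho}E_{\rho,w}$ with $M_\rho\cong\mathbb{Z}$. Over $X_\rho$ the bundle is equivariantly trivialized by weight sections whose values at $x_0$ span the pieces $E_{\rho,w}$. Restricting to $D_\rho$ (the locus where $\chi^{u}$ vanishes, for $u$ with $\langle u,v_\rho\rangle=1$), the cocharacter $\lambda_\rho$ acts on the fibre over a point of $D_\rho$ with the weights $\langle w,v_\rho\rangle$, up to the global sign fixed by the convention $(t\cdot s)(x)=t\cdot s(t^{-1}x)$. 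Thus the multiset of $S_\rho$-characters on $\mathcal{E}|_{D_\rho}$ is exactly the multiset of jump indices of the filtration $E^\rho(\bullet)$, counted with multiplicity $\dim E_{\rho,w}$. The hypothesis that $S_\rho$ acts by a single character therefore says that $E=E_{\rho,w}$ for a single $w$, i.e.
\[
E^\rho(i)=E \ \text{ for } i\le a_\rho,\qquad E^\rho(i)=0 \ \text{ for } i>a_\rho,
\]
for some $a_\rho\in\mathbb{Z}$. This identification is the step I expect to need the most care: one must check that the fibre of $\mathcal{E}$ at a generic point of $D_\rho$ is identified with $\bigoplus_w E_{\rho,w}$, with $\lambda_\rho$ acting with weight $\pm\langle w,v_\rho\rangle$ on the $w$-summand. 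I would verify this directly in the local trivialization $\mathcal{E}|_{X_\rho}\cong\bigoplus_w \mathcal{O}\cdot s_w$.

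\textbf{Step 2: produce the line bundle.} On $\mathbb{P}^d$ any collection of integers $(a_\rho)_{\rho\in\Sigma(1)}$ gives the Cartier data of a toric line bundle, because every maximal cone is smooth and spanned by $d$ of the $d+1$ rays. Concretely, take $L$ one-dimensional with $L^\rho(i)=L$ for $i\le a_\rho$ and $0$ otherwise; by the Example this is a toric line bundle $\mathcal{L}\simeq\mathcal{O}\bigl(\sum_\rho a_\rho D_\rho\bigr)$ (up to the sign convention).

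\textbf{Step 3: compare Klyachko data.} Choose any basis $e_1,\dots,e_r$ of $E$. Each $\Bbbk e_j$, with the induced filtrations, is isomorphic to the data of $L$, and $E=\bigoplus_j \Bbbk e_j$ is compatible with every filtration, since each $E^\rho(i)$ is either $0$ or $E$. Hence the Klyachko data of $\mathcal{E}$ is isomorphic to the data of $L^{\oplus r}$, and Klyachko's equivalence gives the equivariant isomorphism \eqref{eq:bundle_iso}.
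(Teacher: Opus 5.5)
Your proposal is correct and follows essentially the same route as the paper. The paper also reads off the $S_\rho$-weights on $\mathcal{E}|_{D_\rho}$ from the local equivariant splitting (over a maximal-cone chart $X_\sigma$ rather than your ray chart $X_\rho$) to force a single jump at $a_\rho$, builds $\mathcal{L}$ from the characters $u_\sigma$ determined by the $a_\rho$ on each smooth maximal cone, and concludes by Klyachko's equivalence because the filtrations of $\mathcal{E}$ and $\mathcal{L}^{\oplus r}$ coincide.
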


\begin{proof}
Let $E = \mathcal{E}_{x_0}$ denote the fiber over the identity point $x_0 \in T$. For a maximal cone $\sigma \in \Sigma$, \cite[Proposition~1.1]{KM} gives a local $T$-equivariant decomposition over the affine chart $X_\sigma$:
\[
  \mathcal{E}|_{X_\sigma} \cong \bigoplus_{u \in M} \bigl( \mathcal{O}_{X_\sigma} \otimes E_{\sigma, u} \bigr),
\]
where $t \in T$ acts on the weight space $E_{\sigma, u}$ via the character $\chi^u(t)$.

For each ray $\rho \in \sigma(1)$, the one-parameter subtorus $S_\rho$ acts on $E_{\sigma, u}$ with weight $\langle u, v_\rho \rangle$. Under Klyachko's classification, the corresponding ray filtration on $E$ is given by
\[
  E^\rho(i) = \bigoplus_{\langle u, v_\rho \rangle \ge i} E_{\sigma, u}.
\]

By hypothesis, $S_\rho$ acts on $\mathcal{E}|_{D_\rho}$ via the single character $t \mapsto t^{a_\rho}$ for some $a_\rho \in \mathbb{Z}$. Consequently, every nonzero weight space $E_{\sigma, u}$ must satisfy $\langle u, v_\rho \rangle = a_\rho$, which implies that the ray filtration $E^\rho(i)$ has a single jump of full dimension at $i = a_\rho$:
\[
  E^\rho(i) = 
  \begin{cases}
    E, & i \le a_\rho, \\
    0, & i > a_\rho.
  \end{cases}
\]

Because the fan $\Sigma$ is smooth, the primitive ray generators of each maximal cone $\sigma \in \Sigma$ form a lattice basis for $N$. Thus, there exists a unique character $u_\sigma \in M$ such that $\langle u_\sigma, v_\rho \rangle = a_\rho$ for all $\rho \in \sigma(1)$. On a common face $\tau = \sigma \cap \sigma'$, these characters agree on $\operatorname{span}(\tau) \cap N$, so the collection $\{u_\sigma\}_{\sigma \in \Sigma}$ defines a toric line bundle $\mathcal{L}$.

The ray filtrations of $\mathcal{L}^{\oplus r}$ are identical to those of $\mathcal{E}$.  Hence $\mathcal E$ and $\mathcal L^{\oplus r}$ define isomorphic objects in the category of vector spaces with compatible filtrations. By the equivalence of categories, \eqref{eq:bundle_iso} holds for the corresponding toric vector bundles. In particular, $\mathcal E$ splits equivariantly as a direct sum of $r$ copies of a single toric line bundle.
\end{proof}

\subsection{Geometric Proof of Theorem \ref{thm:parabolics}}

\begin{proof}[Proof of Theorem \ref{thm:parabolics}]
Only the reverse implication requires proof.
We argue by induction on \(|A|\).
The assertion is immediate when \(|A|\leq m+1\).
Suppose therefore that
\[
|A|=n+1\geq m+2.
\]
Every proper subfamily satisfies the hypothesis, so the induction
assumption gives a common maximal torus for each proper subfamily.

For each \(\alpha\in A\), let
\begin{equation}
    \label{eq:filtration_for_ray}
    \mathcal F^\alpha:\qquad
0=\mathcal F^\alpha_0
\subsetneq\mathcal F^\alpha_1
\subsetneq\cdots
\subsetneq\mathcal F^\alpha_{s_\alpha}=V
\end{equation}

be the partial flag stabilized by \(P_\alpha\).
A collection of these parabolics contains a common maximal torus if and only if the corresponding flags in \eqref{eq:filtration_for_ray} admit a common coordinate
basis.

For each \(\alpha\), choose integers
\[
c^\alpha_1>c^\alpha_2>\cdots>c^\alpha_{s_\alpha}
\]
and define a decreasing \(\mathbb Z\)-filtration of \(V\) by
\[
F^\alpha(i)
\coloneqq
\mathcal F^\alpha_{r(i)},
\qquad
r(i)\coloneqq
\max\bigl(\{j\mid c^\alpha_j\geq i\}\cup\{0\}\bigr).
\]
Thus \(F^\alpha(i)=V\) for \(i\ll0\), \(F^\alpha(i)=0\)
for \(i\gg0\), and the nontrivial members of the filtration are
precisely the subspaces in \(\mathcal F^\alpha\).
The integers \(c^\alpha_j\) merely label the jumps; their particular
values do not affect the parabolic subgroup stabilizing the filtration. Associate these filtrations with the \(n+1\) rays of the fan of
\(\mathbb P^n\), whose effective torus is
\[
T_n=(\Bbbk^\times)^{n+1}/\Bbbk^\times,
\]
with diagonal denominator. Every $n$-dimensional cone corresponds to an affine chart of $\mathbb P^n$. It contains all but one of the rays, so its filtrations admit a common coordinate basis by induction. Since the primitive generators of such a cone form a lattice
basis, the chosen integral jumps determine characters on that cone. Thus Klyachko's compatibility condition is satisfied, and the filtrations define a rank-\(m\) toric vector bundle \(\mathcal E\) on \(\mathbb P^n\)
\cite[Theorem~2.2.1]{Klyachko}.
An equivariant splitting of $\mathcal E$ is equivalent to a
decomposition
\[
  V=\ell_1\oplus\cdots\oplus\ell_m,
  \qquad \dim\ell_i=1,
\]
that splits all the filtrations simultaneously.

Starting with \(X_0=\mathbb P^n\) and
\(\mathcal E_{0,1}=\mathcal E\), we construct coordinate linear
subspaces
\[
X_k\simeq\mathbb P^{n-k}
\]
and decompositions
\[
\mathcal E|_{X_k}
=\bigoplus_{j=1}^{q_k}\mathcal E_{k,j},
\qquad
q_0=1,
\]
where the summands are nonzero toric vector bundles for an
effective torus \(T_k\) on \(X_k\).

If the subtorus $S_D \subset T_k$ acts via a single character on each summand $\mathcal{E}_{k,j}|_D$ for every coordinate hyperplane $D \subset X_k$, the splitting criterion (Lemma~\ref{lem:scalar-boundary}) implies that every $\mathcal{E}_{k,j}$ splits into toric line bundles, verifying the statement.

Otherwise, choose a coordinate hyperplane $D \subset X_k$ for which the $S_D$-action on at least one summand $\mathcal{E}_{k,j}|_D$ exhibits two or more distinct weights, and set $X_{k+1} := D$. Because $S_D \cong \mathbb{G}_m$ fixes $D$ pointwise, its action on $\mathcal{E}_{k,j}|_D$ induces a decomposition into weight subbundles:
\[
  \mathcal{E}_{k,j}|_D = \bigoplus_{a \in \mathbb{Z}} \bigl( \mathcal{E}_{k,j}|_D \bigr)_a.
\]
Since $S_D$ is diagonalizable, these weight spaces form vector subbundles. 

Now select a complementary subtorus $T_{k+1} \subset T_k$ such that $T_k \cong S_D \times T_{k+1}$. The quotient torus $T_{k+1}$ acts effectively on $X_{k+1} \cong \mathbb{P}^{n-k-1}$. Moreover, because $T_k$ is abelian, $T_{k+1}$ commutes with $S_D$ and thus preserves each weight subbundle $(\mathcal{E}_{k,j}|_D)_a$. Collecting all nonzero weight subbundles defines the refined collection of summands $\{\mathcal{E}_{k+1, m}\}$ on $X_{k+1}$. Because at least one $\mathcal{E}_{k,j}|_D$ splits into two or more non-trivial weight subbundles, the total number of summands strictly increases:
\[
  q_{k+1} \ge q_k + 1.
\]

The total rank remains \(m\), so \(q_k\leq m\).
Consequently, after at most \(m-1\) restrictions the process
stops: either the splitting criterion applies earlier, or
there are \(m\) nonzero summands and all of them are line bundles.
For the resulting \(k\leq m-1\), the restriction
\(\mathcal E|_{X_k}\) splits and
\[
\dim (X_k)=n-k\geq n-m+1\geq 2.
\]
Hence \(\mathcal E\) restricts to a sum of line bundles on a coordinate plane \(\mathbb P^2\subseteq X_k\).
By Horrocks' splitting criterion, the bundle $\mathcal{E}$ splits on $\mathbb{P}^n$ as a sum of line bundles (\cite[Chapter~I, Theorem~2.3.2]{OSS}). Consequently, Klyachko's equivariant splitting criterion \cite[Theorem~1.2.3(ii)]{Klyachko} guarantees that $\mathcal{E}$ splits $T$-equivariantly.

The corresponding decomposition
\[
V=\ell_1\oplus\cdots\oplus\ell_m,
\qquad \dim\ell_i=1,
\]
splits every flag \(\mathcal F^\alpha\).
Choosing a nonzero vector in each \(\ell_i\) gives a common coordinate basis. Its diagonal torus is contained in every \(P_\alpha\), as required.
\end{proof}

\begin{rmk}
\label{rmk:gl_centralizers}
The restriction argument has a direct interpretation in terms
of structure groups.
At the \(k\)-th stage, write
\(r_{k,j}=\operatorname{rank}(\mathcal E_{k,j})\).
The decomposition reduces the frame bundle to
\[
H_k=\prod_{j=1}^{q_k}\operatorname{GL}_{r_{k,j}},
\qquad
\operatorname{rank}(H_k^{\mathrm{der}})
=\sum_{j=1}^{q_k}(r_{k,j}-1)=m-q_k,
\]
where
$H_k^{\mathrm{der}} = \prod\limits_{j=1}^{q_k} \operatorname{SL}_{r_{k,j}}$
is the derived subgroup of $H_k$.
Restricting to a coordinate hyperplane and taking weight
subbundles replaces \(H_k\) by the centralizer of the
cocharacter describing the stabilizer action on a fiber.
Every nontrivial refinement strictly decreases the rank of
the derived group. The splitting criterion is precisely the
case in which all the boundary cocharacters are central.
Thus the bound of \(m-1\) restrictions is the initial
semisimple rank
\(\operatorname{rank}(\operatorname{SL}_m)=m-1\).
This is the form of the argument that extends to other connected reductive structure groups.
\end{rmk}

\section{Helly's Theorem for Connected Reductive Groups}
\label{sec:reductive_groups}

Throughout this section, $G$ is a connected reductive complex algebraic group. We extend Theorem~\ref{thm:parabolics} to this setting. We write $n+1$ for the number of rays (and corresponding parabolic subgroups) and use the standard toric structure on $\mathbb{P}^n$.

\begin{definition}
A \emph{toric principal $G$-bundle} on $X_\Sigma$ is a principal right $G$-bundle $\mathcal{P} \to X_\Sigma$ equipped with a lift of the acting torus action that commutes with the right $G$-action. A principal bundle is called \emph{split} if it admits a reduction of structure group to a maximal torus. For a toric principal bundle, an \emph{equivariant splitting} is such a reduction preserved by the acting torus.    
\end{definition}

For the identity point $x_0$ of the open torus orbit, a framing of a toric principal $G$-bundle $\mathcal{P}$ is a choice of $p_0 \in \mathcal{P}_{x_0}$. For a cocharacter $\lambda \colon \mathbb{G}_m \to G$, let
\begin{equation}
    \label{eq:parabolic_lambda}
      P(\lambda) := \left\{ g \in G \mid \lim_{z \to 0} \lambda(z) g \lambda(z)^{-1} \text{ exists in } G \right\}
\end{equation}

be its associated parabolic subgroup.

Choose local equivariant trivializations $\mathcal{P}|_{X_\sigma} \cong X_\sigma \times G$ sending $p_0$ to $(x_0, 1)$, with torus action $t \cdot (x, h) = (tx, \rho_\sigma(t)h)$. For a ray $\rho \in \Sigma(1)$, let $v_\rho \colon \mathbb{G}_m \to T_n$ be its primitive one-parameter subgroup. Define its ray parabolic by
\begin{equation}
    \label{eq:ray_parabolic}
  P_\rho := P(\rho_\sigma \circ v_\rho), \qquad \rho \subseteq \sigma.
\end{equation}
This subgroup is independent of the cone $\sigma$ and the normalized trivialization \cite[Theorem~2.2]{KM}.

We first establish the realization, restriction, and splitting criteria needed for the two Helly-type theorems.


\begin{lem}
\label{lem:toric-construction}
Let $n\geq1$, and let $P_0,\ldots,P_n$ be parabolic subgroups of $G$.
Suppose that every proper subfamily contains a common maximal
torus. There is a framed toric principal $G$-bundle $\mathcal P$
on $\mathbb P^n$ whose ray parabolics are $P_0,\ldots,P_n$.
Moreover,
\[
  \mathcal P\text{ splits equivariantly}
  \quad\Longleftrightarrow\quad
  \bigcap_{\alpha=0}^{n}P_\alpha
  \text{ contains a maximal torus of }G.
\]
\end{lem}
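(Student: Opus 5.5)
We will build $\mathcal P$ by gluing trivial equivariant bundles over the standard affine charts of $\mathbb P^n$, following the $\operatorname{GL}(V)$ construction in the geometric proof of Theorem~\ref{thm:parabolics}, but with cocharacters into $G$ in place of integral filtrations. The main tool is the Kaneyama--Klyachko-type description of framed toric principal bundles in \cite[Theorem~2.2]{KM}: such a bundle is given by homomorphisms $\rho_\sigma\colon T_n\to G$, one for each maximal cone $\sigma$, together with transition elements. These data must satisfy the rule that, on overlaps, $\rho_\sigma$ and $\rho_{\sigma'}$ are conjugate through a regular map $X_\sigma\cap X_{\sigma'}\to G$ extending to the relevant face.

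\textbf{Construction.} The maximal cones are $\sigma_i$, spanned by all rays except $\rho_i$. By hypothesis, $Q_i:=\bigcap_{\alpha\ne i}P_\alpha$ contains a maximal torus $T_i$. For each $\alpha$, choose a cocharacter $\lambda_\alpha$ with $P(\lambda_\alpha)=P_\alpha$ that is \emph{central in the Levi factor $L_\alpha$ containing the chosen torus}. Concretely, we take $\lambda_\alpha$ in the center of the Levi of $P_\alpha$ through $T_i$, as a dominant element of the face of the Weyl chamber determined by the type of $P_\alpha$.

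The key observation is that such $\lambda_\alpha$ are unique up to conjugation by the unipotent radical $U_\alpha$, since all Levis of $P_\alpha$ are $U_\alpha$-conjugate. To make this canonical, we fix a single \emph{type-dependent} integral cocharacter $\mu_\alpha$ of the standard torus and realize $\lambda_\alpha^{(i)}$ as its conjugate lying in $T_i$ for each $i\neq\alpha$. Since the rays of $\sigma_i$ form a basis of the cocharacter lattice of $T_n$, sending $v_{\rho_\alpha}\mapsto\lambda^{(i)}_\alpha$ for $\alpha\neq i$ defines a homomorphism $\rho_{\sigma_i}\colon T_n\to T_i\subset G$. This uses that the $\lambda^{(i)}_\alpha$ commute, which holds because they all lie in $T_i$.

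\textbf{Gluing.} We need a cocycle compatibility along each codimension-one face $\sigma_i\cap\sigma_j$. On that face the relevant cocharacters are $\lambda^{(i)}_\alpha$ and $\lambda^{(j)}_\alpha$ for $\alpha\ne i,j$. Both are defined through tori in $\bigcap_{\alpha\ne i,j}P_\alpha$, and they differ by conjugation by an element of $\bigcap_{\alpha\neq i,j}P_\alpha$; this uses the fact that any two maximal tori of a connected intersection of parabolics are conjugate in it. For an element $g$ of this group, the map $t\mapsto \rho_{\sigma_i}(t)\,g\,\rho_{\sigma_j}(t)^{-1}$ extends regularly across the face exactly because $g$ lies in $P(\lambda_\alpha)$ for the face rays. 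The criterion of \cite{KM} then produces a framed bundle, and its ray parabolics are $P(\lambda_\alpha)=P_\alpha$ by \eqref{eq:ray_parabolic}.

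\textbf{Main obstacle.} I expect the hardest step to be this gluing. It requires two facts: that $\bigcap_{\alpha\in S}P_\alpha$ is connected and contains the tori in question, so that the tori are conjugate inside it; and that the conjugating elements satisfy the global cocycle condition on triple overlaps. To handle the cocycle condition, I would first try to reduce to it automatically via the $\mathbb{P}^n$ fan structure and the uniqueness statement in \cite[Theorem~2.2]{KM}.

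\textbf{Equivalence.} If $\mathcal P$ splits equivariantly, then all $\rho_\sigma$ can be simultaneously conjugated into one torus $T$ with constant transitions. In that case $T$ centralizes every $\lambda_\alpha$, so $T\subset P_\alpha$ for all $\alpha$. Conversely, if $T\subset\bigcap_\alpha P_\alpha$, we take all $\lambda_\alpha$ in $T$; the construction then has trivial transition functions and yields a $T_n$-equivariant reduction to $T$. By the independence of ray parabolics, this bundle is isomorphic to $\mathcal P$.
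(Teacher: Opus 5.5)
\textbf{Gap in the gluing step.} Your construction of the homomorphisms $\rho_{\sigma_i}\colon T_n\to T_i$ agrees with the paper's: you conjugate each $\lambda_\alpha$ within $P_\alpha$ into a maximal torus of $\bigcap_{\alpha\neq i}P_\alpha$. Your extra ``central in the Levi'' condition is automatic, since $\lambda$ is central in $C_G(\lambda)$. However, the gluing step, which you yourself flag, is left open, and with your ansatz it can genuinely fail.

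With transition functions $t\mapsto\rho_{\sigma_i}(t)\,g_{ij}\,\rho_{\sigma_j}(t)^{-1}$, where $g_{ij}$ carries $T_j$ to $T_i$, the cocycle condition on $T_n$ becomes $g_{ij}g_{jk}g_{ki}=1$. Each $g_{ij}$ is only determined up to right multiplication by $N_{Q_{ij}}(T_j)\supseteq T_j$. So the triple product is merely some element of $N_G(T_i)$, in general nontrivial, and no uniqueness statement from \cite{KM} repairs an arbitrary choice.

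The missing idea is that the conjugator does not belong in the transition function at all. Normalize every chart at the common framing point $(x_0,1)$, i.e.\ take $g_{ij}=1$, so that $g_{ij}(t)=\rho_{\sigma_i}(t)\rho_{\sigma_j}(t)^{-1}$. The cocycle identity then holds on the dense torus, hence on all overlaps.

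The conjugator $g_0\in Q_{ij}=\bigcap_{\alpha\neq i,j}P_\alpha$ is needed only to prove regularity. Write $\rho_{\sigma_j}=g_0^{-1}\rho' g_0$, where $\rho'$ agrees with $\rho_{\sigma_i}$ on the face rays, and write $t=s\cdot v_j(z)$ with $s=\prod_{\alpha\neq i,j}v_\alpha(z_\alpha)$. Then the only factor not obviously regular on $X_{\sigma_i}\cap X_{\sigma_j}\cong\mathbb{A}^{n-1}\times\mathbb{G}_m$ is $\mu(s)\,g_0^{-1}\mu(s)^{-1}$, where $\mu(s)=\prod_{\alpha\neq i,j}\lambda^{(i)}_\alpha(z_\alpha)$. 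This factor extends over $\mathbb{A}^{n-1}$ because $g_0^{-1}$ lies in every $P(\lambda^{(i)}_\alpha)$ \emph{and} all the $\lambda^{(i)}_\alpha$ lie in the single torus $T_i$. To see this, decompose a faithful representation into $T_i$-weight spaces: the matrix entries are multiplied by monomials with nonnegative exponents. This is a simultaneous statement in all face coordinates, not the ray-by-ray condition you invoke.

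The paper bypasses this computation entirely. It embeds $G\hookrightarrow\operatorname{GL}(W)$, obtains regularity of $\rho_\tau\rho_\sigma^{-1}$ in $\operatorname{GL}(W)$ from Klyachko's theorem applied to the weight filtrations, and concludes by closedness of $G$.

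\textbf{The equivalence also needs tightening.} An equivariant reduction to $T$ does not give constant transition functions; already for $\operatorname{GL}_2$ and $\mathcal{O}(1)\oplus\mathcal{O}$ they are nonconstant and $T$-valued. What is true, and sufficient, is that in charts normalized at a point of the reduction all ray cocharacters lie in $T$, so a conjugate of $T$ lies in every $P_\alpha$.

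Conversely, taking all $T_i=T$ gives $T$-valued transitions, not trivial ones. Moreover, identifying this bundle with $\mathcal P$ does not follow from the independence of ray parabolics, because ray parabolics alone do not determine a framed bundle (the integer labels matter). You need that replacing $T_i$ by $q_iT_iq_i^{-1}$, with $q_i\in\bigcap_{\alpha\neq i}P_\alpha$, yields an isomorphic framed bundle. This follows from the gauge maps $t\mapsto\rho'_{\sigma_i}(t)\rho_{\sigma_i}(t)^{-1}$, which are regular on $X_{\sigma_i}$ by the same limit argument. Alternatively, cite the apartment criterion \cite[Corollary~5.5]{HK}, as the paper does; it gives both directions at once.
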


\begin{proof}
Choose a faithful closed embedding $G \hookrightarrow \operatorname{GL}(W)$, where $W$ is a finite-dimensional vector space. For a cocharacter $\lambda \colon \mathbb{G}_m \to G$, let
\(
  P(\lambda)
\)
denote the corresponding parabolic subgroup of $G$ (see \eqref{eq:parabolic_lambda}). For each $0 \le \alpha \le n$, choose a cocharacter $\lambda_\alpha \colon \mathbb{G}_m \to G$ such that $P(\lambda_\alpha) = P_\alpha$, and let
\begin{equation}
    \label{eq:filtrations}
     F_\alpha(i) = \bigoplus_{a \ge i} W_a^{\lambda_\alpha}
\end{equation}
 be the associated decreasing weight filtration on $W$, where $W_a^{\lambda_\alpha} = \{ w \in W \mid \lambda_\alpha(t) \cdot w = t^a w \}$ is the weight space of weight $a$ under the $\mathbb{G}_m$-action induced by $\lambda_\alpha$.
For each maximal cone $\sigma$ of the fan of $\mathbb P^n$, choose a maximal torus $A_\sigma$ contained in the intersection of the parabolics corresponding to its rays. Since maximal tori of $P_\alpha$ are conjugate under
$P_\alpha$, some $p_\alpha\in P_\alpha$ conjugates
$\lambda_\alpha$ into $A_\sigma$. The subgroup
$P_\alpha$ stabilizes the weight filtration of
$\lambda_\alpha$ on $W$, so this conjugation leaves that
filtration  $F_\alpha(\bullet)$ unchanged. The resulting commuting cocharacters give a simultaneous splitting of the filtrations belonging to $\sigma$. Since its primitive ray generators form a lattice basis, they determine a homomorphism
\[
  \rho_\sigma:T_n\longrightarrow A_\sigma\subseteq G.
\]
By Klyachko's classification theorem, these ray filtrations determine  a toric vector bundle $\mathcal E$ with fiber $W$ and the filtrations in \eqref{eq:filtrations}.

Using local equivariant trivializations normalized by identifying the fiber $\mathcal{E}_{x_0}$ with $W$, the transition functions on the open orbit are
\[
  g_{\tau\sigma}(t) = \rho_\tau(t) \rho_\sigma(t)^{-1} \in G.
\]
By construction, these transition functions extend regularly to $\operatorname{GL}(W)$-valued functions on overlaps $X_\tau \cap X_\sigma$. Since $G$ is closed in $\operatorname{GL}(W)$ and the open orbit is dense in each overlap, the extended functions take values in $G$. Gluing trivial principal $G$-bundles over the affine cover yields a toric principal $G$-bundle $\mathcal{P}$. Since $g_{\tau\sigma}(x_0) = 1$, the points $(x_0, 1)$ define a common point $p_0 \in \mathcal{P}_{x_0}$, giving the required framing.

By \cite[Theorem~2.2]{KM}, $\mathcal{P}$ has the prescribed cocharacter class on each ray. The equivariant splitting criterion \cite[Corollary~5.5]{HK} asserts that $\mathcal{P}$ splits equivariantly if and only if these ray classes admit representatives in a single maximal torus $A \subset G$. A class represented by $\lambda_\alpha$ admits a representative in $A$ if and only if $A \subseteq P_\alpha$. Consequently, all classes have representatives in $A$ if and only if $A \subseteq \bigcap\limits_{\alpha=0}^n P_\alpha$.

\end{proof}

\begin{lem}
\label{lem:centralizer_restriction}
Let $\mathcal{Q}$ be a toric principal $G$-bundle on $\mathbb{P}^d$ with $d \ge 2$. Let $D \subset \mathbb{P}^d$ be a coordinate hyperplane, and let $S \cong \mathbb{G}_m$ be the $1$-parameter subtorus of $T_d$ that fixes $D$ pointwise. The action of $S$ on the fibers of $\mathcal{Q}|_D$ determines a cocharacter $\lambda \colon S \to G$ up to $G$-conjugacy, whose conjugacy class is independent of the point chosen on $D$. The following statements hold:
\begin{enumerate}
    \item The restriction $\mathcal{Q}|_D$ admits a $T_d$-invariant reduction of structure group to the Levi subgroup
    \[
        L = C_G(\lambda(S)).
    \]
    For any complementary torus $T' \subset T_d$ such that $T_d \cong S \times T'$, this reduction defines a toric principal $L$-bundle on $D \cong \mathbb{P}^{d-1}$ with respect to the action of $T'$. If $\lambda(S) \not\subset Z(G)$, then the semisimple rank satisfies
    \begin{equation}
        \label{eq:rank_inequality}
        \operatorname{rk}(L_{\mathrm{der}}) \le \operatorname{rk}(G_{\mathrm{der}}) - 1.
    \end{equation}

    \item If the fiber cocharacters associated with all coordinate hyperplanes take values in the center $Z(G)$, then $\mathcal{Q}$ splits equivariantly.
\end{enumerate}
\end{lem}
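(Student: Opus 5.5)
Work in the local normalized trivializations $\mathcal Q|_{X_\sigma}\cong X_\sigma\times G$ with $t\cdot(x,h)=(tx,\rho_\sigma(t)h)$ for homomorphisms $\rho_\sigma\colon T_d\to G$, exactly as in the construction preceding Lemma~\ref{lem:toric-construction}.

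\emph{Fiber cocharacter.} Let $D=D_\rho$ and choose a maximal cone $\sigma\ni\rho$. A point $y\in D\cap X_\sigma$ is fixed by $S=v_\rho(\mathbb G_m)$, so $S$ acts on $\mathcal Q_y=\{y\}\times G$ by left multiplication by $\lambda_\sigma:=\rho_\sigma\circ v_\rho$. This cocharacter is independent of $y\in D\cap X_\sigma$. If we move $y$ by $T_d$, or change trivialization by a $G$-valued function, $\lambda_\sigma$ changes only by conjugation in $G$. Since $D$ is irreducible and the charts $D\cap X_\sigma$ cover $D$ and pairwise intersect, the $G$-conjugacy class is a single well-defined class $[\lambda]$. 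Alternatively, conjugacy classes of cocharacters form a discrete set, and the class varies continuously on the connected $D$.

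\emph{Part (1).} Set $L=C_G(\lambda(S))$ and define
\[
  \mathcal Q_L:=\{p\in \mathcal Q|_D \mid s\cdot p=p\,\lambda(s)\ \forall s\in S\}.
\]
At a point $p=(y,h)$ the defining condition reads $\lambda_\sigma(s)h=h\lambda(s)$. After conjugating the chart by some $g_\sigma$ so that $\lambda_\sigma=\lambda$, this becomes $h\in L$. Hence $\mathcal Q_L$ is locally $D\cap X_\sigma\times L$, which gives a reduction to $L$. Because $T_d$ is commutative, $T_d$ commutes with $S$ and preserves $\mathcal Q_L$. Then $T'$ acts on $\mathcal Q_L$ compatibly with the right $L$-action and covers the effective $T'$-action on $D\cong\mathbb P^{d-1}$, so $\mathcal Q_L$ is a toric principal $L$-bundle.

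The centralizer of a torus in a connected reductive group is a Levi subgroup of $P(\lambda)$, and it contains a maximal torus $A$ of $G$. Its roots are $\{\alpha:\langle\alpha,\lambda\rangle=0\}$, a closed subsystem cut out by a hyperplane of $X^*(A)_\mathbb R$. If $\lambda$ is not central, some root pairs nonzero with $\lambda$. Then the span of the roots of $L$ is a proper subspace of the span of the roots of $G$ intersected with $\lambda^\perp$, so its dimension drops by at least $1$. This gives \eqref{eq:rank_inequality}.

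\emph{Part (2).} If every $\lambda_\rho$ is central, then every ray parabolic satisfies $P_\rho=P(\lambda_\rho)=G$. So every maximal torus $A$ lies in all $P_\rho$, i.e. all ray classes have representatives in a single maximal torus. By the equivariant splitting criterion \cite[Corollary~5.5]{HK}, already used in Lemma~\ref{lem:toric-construction}, $\mathcal Q$ splits equivariantly. This is the analogue of Lemma~\ref{lem:scalar-boundary}, and $\mathcal Q$ then comes from a single homomorphism $T_d\to Z(G)$ twisted by line-bundle data.

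The main obstacle is the rigorous well-definedness step, namely showing that $\mathcal Q_L$ is a smooth locally trivial $L$-bundle across chart overlaps. I would handle it by noting that transition functions $g_{\tau\sigma}$ intertwine $\lambda_\sigma$ and $\lambda_\tau$ on $D$, so after the conjugations they take values in the normalizer of $\lambda$, which is $L$.
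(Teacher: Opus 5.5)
Your proposal is correct. The well-definedness of the conjugacy class and part (1) follow the paper's proof essentially step for step: conjugate each chart by a constant so that $\rho_\sigma|_S=\lambda$, define $\mathcal Q_L$ by $s\cdot q=q\lambda(s)$, identify it locally with $(X_\sigma\cap D)\times L$, note that $T_d$ commutes with $S$ and with the right action, and count roots in $\lambda^\perp$.

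Two small wording fixes in part (1). The intertwining relation $g\lambda(s)g^{-1}=\lambda(s)$ puts $g$ in the \emph{centralizer} $C_G(\lambda(S))=L$; the normalizer of $\lambda(S)$ can be strictly larger. In the rank count, the right statement is $\operatorname{span}\Phi_L\subseteq\operatorname{span}\Phi_G\cap\lambda^\perp$, and it is the latter that is a proper subspace of $\operatorname{span}\Phi_G$.

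Part (2) is where you genuinely take a different route.
\begin{itemize}
\item The paper argues directly. It normalizes all charts at $x_0$ and uses that the ray generators of each maximal cone form a lattice basis to get $\rho_\sigma(T_d)\subseteq Z(G)^0$. Hence the transition functions $\rho_\tau\rho_\sigma^{-1}$ take values in $Z(G)^0$, and the local pieces $X_\sigma\times Z(G)^0$ (then $X_\sigma\times A$ for a maximal torus $A\supseteq Z(G)^0$) glue to an equivariant reduction.
\item You instead observe that central fiber cocharacters force $P_\rho=G$ for every ray. So any maximal torus is common to all ray parabolics, and you invoke the apartment criterion \cite[Corollary~5.5]{HK}.
\end{itemize}

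Your route is shorter and adds no new dependency, since the paper already relies on that criterion in Lemma~\ref{lem:toric-construction}. The paper's route is self-contained at this step. It also gives a stronger structural conclusion: $\mathcal Q$ reduces equivariantly to $Z(G)^0$ itself, the principal-bundle analogue of $\mathcal E\simeq\mathcal L^{\oplus r}$ in Lemma~\ref{lem:scalar-boundary}. Your final sentence asserts exactly this without proof, so either drop it or supply the paper's gluing argument.
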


\begin{proof}
By local equivariant triviality \cite[Theorem~2.5]{KM}, $\mathcal{Q}$ admits local trivializations over the standard affine toric charts $X_\sigma \subset \mathbb{P}^d$ in which the $T_d$-action is given by $t \cdot (x, h) = (tx, \rho_\sigma(t)h)$ for homomorphisms $\rho_\sigma \colon T_d \to G$. Over any chart meeting $D$, the fiber action of $S$ is given by the restricted cocharacter $\rho_\sigma|_S \colon S \to G$. Transition functions conjugate these homomorphisms on overlaps. Since the chart intersections with $D$ are connected and mutually overlap, all such restrictions are $G$-conjugate. Thus, after conjugating the local trivializations by constant elements of $G$, we may assume that $\rho_\sigma|_S = \lambda$ for a fixed cocharacter $\lambda \colon S \to G$ across all charts meeting $D$.

Define the subbundle
\[
\mathcal Q_L
:=
\{q\in\mathcal Q|_D\mid
s\cdot q=q\lambda(s)\text{ for every }s\in S\}.
\]
In the normalized local trivializations over $X_\sigma \cap D$, the subbundle $\mathcal{Q}_L$ is $(X_\sigma \cap D) \times L$, where $L = C_G(\lambda(S))$. Hence, $\mathcal{Q}_L$ is a principal $L$-subbundle of $\mathcal{Q}|_D$, giving a reduction of structure group to $L$. Because the $T_d$-action on $\mathcal{Q}$ commutes with both $S$ and the right $G$-action, it preserves $\mathcal{Q}_L$. The short exact sequence of algebraic tori
\[
    1 \longrightarrow S \longrightarrow T_d \longrightarrow T_d/S \longrightarrow 1
\]
splits. Choosing a splitting identifies $T_d/S$ with a complementary subtorus $T' \subset T_d$. The torus $T' \cong T_{d-1}$ acts effectively on $D \cong \mathbb{P}^{d-1}$, making $\mathcal{Q}_L$ a toric principal $L$-bundle over $D$.

The centralizer $L$ is connected and reductive.
To prove the rank inequality \eqref{eq:rank_inequality}, choose a maximal torus of $G$
containing $\lambda(S)$, and let $\Phi_G$ be its root system.
The root system of $L$ is
\[
\Phi_L=\{\alpha\in\Phi_G\mid
\langle\alpha,\lambda\rangle=0\}.
\]
If $\lambda(S)$ is noncentral, the functional
$\alpha\mapsto\langle\alpha,\lambda\rangle$ is nonzero on
the span of $\Phi_G$. Thus the span of $\Phi_L$ has dimension
at most $\operatorname{rank}(G_{\mathrm{der}})-1$.

For the second assertion, assume the fiber cocharacters associated with all coordinate hyperplanes $D_0, \dots, D_d$ take values in $Z(G)$. Choose local trivializations normalized by a common frame over the identity point $x_0$ of the dense orbit $T_d$. For each maximal cone $\sigma$, the primitive generators of its rays correspond to the $1$-parameter subtori fixing the coordinate hyperplanes meeting $X_\sigma$. Because these primitive ray generators form a lattice basis of $N$, and the restricted homomorphisms $\rho_\sigma|_S$ take values in $Z(G)$, it follows that 
\[
    \rho_\sigma(T_d) \subseteq Z(G)^0
\]
for every maximal cone $\sigma$. On the open orbit, the transition function between two such trivializations is $g_{\tau\sigma}(t) = \rho_\tau(t)\rho_\sigma(t)^{-1} \in Z(G)^0$. Since $Z(G)^0$ is a closed subgroup of $G$, these transition functions take values in $Z(G)^0$ over all overlaps. The local copies of $Z(G)^0$ therefore glue to yield an equivariant reduction of structure group to $Z(G)^0$, and subsequently to any maximal torus of $G$ containing $Z(G)^0$. Thus, $\mathcal{Q}$ splits equivariantly.
\end{proof}

\begin{lem}
\label{lem:principal_horrocks}
Let $\mathcal{Q}$ be a toric principal $G$-bundle on $\mathbb{P}^d$. If the restriction of $\mathcal{Q}$ to a linear subspace $Y \cong \mathbb{P}^q\subset \mathbb{P}^d$ ($2\le q \le d$) splits, then $\mathcal{Q}$ splits equivariantly.
\end{lem}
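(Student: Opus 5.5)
The plan is to prove the statement in two stages. First I show that $\mathcal Q$ splits as a principal $G$-bundle, ignoring the torus action. Then I upgrade this to an equivariant splitting using the acting torus $T_d$.

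Throughout I take $Y$ to be a coordinate (hence $T_d$-invariant) linear subspace, which is the situation in which the lemma is applied. Choosing a flag of coordinate subspaces $Y=Y_q\subset Y_{q+1}\subset\cdots\subset Y_d=\mathbb P^d$, with $Y_k\cong\mathbb P^k$, reduces the first stage to one inductive step. That step is: if $\mathcal Q|_{Y_{k}}$ splits and $k\ge 2$, then $\mathcal Q|_{Y_{k+1}}$ splits.

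\textbf{Inductive step (non-equivariant).} Write $H=Y_k\subset Y_{k+1}=\mathbb P^{k+1}$, and fix a maximal torus $A\subset G$. A splitting of $\mathcal Q|_H$ is a reduction $\mathcal Q_A$ to $A$. This is a tuple of line bundles indexed by a basis of $X^*(A)$, i.e.\ a cocharacter-valued class $c\in X_*(A)\otimes\operatorname{Pic}(H)$.

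Since restriction $\operatorname{Pic}(\mathbb P^{k+1})\to\operatorname{Pic}(H)$ is an isomorphism, $c$ extends uniquely to an $A$-bundle $\mathcal Q'_A$ on $\mathbb P^{k+1}$. Put $\mathcal Q'=\mathcal Q'_A\times^A G$; then $\mathcal Q'|_H\cong\mathcal Q|_H$.

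The sheaf $\mathcal I=\underline{\operatorname{Isom}}(\mathcal Q',\mathcal Q)$ is a torsor under the group scheme $\operatorname{Aut}(\mathcal Q')=\mathcal Q'\times^{G,\mathrm{conj}}G$, and it has a section over $H$. I extend this section successively to the infinitesimal neighbourhoods $H^{(j)}$, following Horrocks' argument as in \cite[Chapter~I, Theorem~2.3.2]{OSS}. The obstruction to passing from $H^{(j)}$ to $H^{(j+1)}$ lies in $H^1\bigl(H,\operatorname{ad}(\mathcal Q'|_H)(-j-1)\bigr)$ for $j\ge0$. Since $\operatorname{ad}(\mathcal Q'|_H)=\mathfrak a\otimes\mathcal O\oplus\bigoplus_{\alpha\in\Phi}\mathcal O(\langle\alpha,c\rangle)$ is a sum of line bundles on $H\cong\mathbb P^{k}$, and $k\ge2$, all these groups vanish.

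This gives a formal isomorphism along $H$. Passing to the formal completion and algebraizing is the delicate point. I would argue via the associated vector bundle $\mathcal E=\mathcal Q\times^G W$ for a faithful representation $G\hookrightarrow\operatorname{GL}(W)$. By the usual Horrocks statement, $\mathcal E\cong\mathcal E'=\mathcal Q'\times^G W$ globally. Then $\mathcal I$ is the closed subscheme of $\underline{\operatorname{Isom}}(\mathcal E',\mathcal E)$ cut out by the $G$-structure. The restriction map on global sections $H^0(\mathbb P^{k+1},\mathcal E nd(\mathcal E')(-1))\to\cdots$ is surjective, again by vanishing of $H^1$ of twisted line bundles. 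Therefore the formal section lifts to a global section whose restriction to $H$ is an isomorphism, hence an isomorphism near $H$. It is then an isomorphism everywhere, since the degeneracy locus is a divisor given by a determinant and it misses $H$, which meets every divisor. This algebraization-and-compatibility-with-$G$ step is the one I expect to be the main obstacle. If it resists, I would instead invoke Ramanathan's (Grothendieck--Harder type) description of $G$-bundles on $\mathbb P^1$ applied to lines, together with the standard semicontinuity argument showing that the generic splitting type is constant.

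\textbf{Equivariance.} Suppose now that $\mathcal Q\cong\mathcal Q'_A\times^A G$ splits. The group $\Gamma=\operatorname{Aut}_G(\mathcal Q)$ of global bundle automorphisms is a connected linear algebraic group: it is the group of global sections of $\operatorname{Aut}(\mathcal Q')$, which is computable from the line bundles $\mathcal O(\langle\alpha,c\rangle)$. Let $\widetilde\Gamma$ be the group of pairs $(t,\phi)$ with $t\in T_d$ and $\phi$ an automorphism of $\mathcal Q$ covering $t$. Then $\widetilde\Gamma$ is an extension $1\to\Gamma\to\widetilde\Gamma\to T_d\to1$, and the toric structure is a splitting $T_d\hookrightarrow\widetilde\Gamma$; let $\widetilde T$ denote its image.

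The reductions of $\mathcal Q$ to $A$ form a $\Gamma$-homogeneous family that contains $\mathcal Q'_A$. The stabilizer of $\mathcal Q'_A$ contains a torus $\widetilde T'$ generated by a maximal torus of $\Gamma$ together with a lift of $T_d$ acting by the standard linearizations of the line bundles. Both $\widetilde T$ and $\widetilde T'$ lie in maximal tori of $\widetilde\Gamma$. By conjugacy of maximal tori, some $\gamma\in\Gamma$ conjugates $\widetilde T$ into the stabilizer of $\mathcal Q'_A$. The reduction $\gamma^{-1}\cdot\mathcal Q'_A$ is then $T_d$-invariant, which is an equivariant splitting.

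As a consistency check, \cite[Corollary~5.5]{HK} expresses the conclusion as the statement that the ray cocharacter classes of $\mathcal Q$ have representatives in a common maximal torus.
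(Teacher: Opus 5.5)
Your proof is correct in substance, but it takes a genuinely different route from the paper's. The paper handles $\mathcal{Q}$ only through a faithful representation, in three steps:
\begin{enumerate}
\item it applies the classical Horrocks criterion to $E=\mathcal{Q}\times^G V$, so $E$ splits;
\item it makes this splitting equivariant by \cite[Theorem~1.2.3(ii)]{Klyachko};
\item it invokes \cite[Theorem~6.9]{DKBDP} to transfer equivariant splitting from $E$ back to $\mathcal{Q}$.
\end{enumerate}
You instead prove a Horrocks theorem directly for principal $G$-bundles. The obstructions lie in $H^1(H,\operatorname{ad}(\mathcal{Q}'|_H)(-j-1))$, and the vector bundle serves only as a device for algebraization. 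You then obtain equivariance by conjugating the toric torus into the stabilizer of a split reduction inside $\Gamma\rtimes T_d$. In effect you reprove, for principal bundles, both Klyachko's equivariant-splitting theorem and the transfer step the paper cites. The paper's proof is short but treats as a black box the nontrivial point that splitting of an associated vector bundle does not formally force splitting of $\mathcal{Q}$. Yours is self-contained and shows exactly where $q\ge 2$ enters, at the price of several verifications that you only sketch.

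Those verifications are routine but should be stated.
\begin{itemize}
\item \emph{The coordinate assumption on $Y$ is unnecessary.} Your first stage is non-equivariant, so any flag of linear subspaces through $Y$ works, and the lemma holds for arbitrary linear $Y$ as stated.
\item \emph{Algebraization needs injectivity as well as surjectivity.} Surjectivity of restriction alone does not give one global $\Phi$ compatible with all the $\phi_j$. You also need injectivity for $j\gg0$, which follows from $H^0(\mathbb{P}^{k+1},\mathcal{E}nd(\mathcal{E}')(-j-1))=0$. Then $\Phi^{-1}(\mathcal{I})$ is a closed subscheme containing every $H^{(j)}$, hence all of $\mathbb{P}^{k+1}$ by Krull's intersection theorem and integrality. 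So $\Phi$ respects the $G$-structure.
\item \emph{The $\Gamma$-homogeneity claim is false as stated, but unused.} Your claim that $A$-reductions form a single $\Gamma$-orbit fails: right translation by $\dot w\in N_G(A)$ replaces the type $c$ by a Weyl conjugate, which lies outside the orbit when $c$ is not $w$-fixed. The argument never uses this.
\item \emph{What you do need is maximality of the torus.} Your torus (the constant copy of $A$ together with the linearization lift of $T_d$) must be a maximal torus of $\widetilde\Gamma$, i.e.\ $\operatorname{rank}\Gamma=\dim A$. This holds because the kernel of evaluation $\Gamma\to\operatorname{Aut}(\mathcal{Q}_x)\cong G$ at a point is unipotent; on the split model, $\Gamma\cong C_G(c)\ltimes U$ with $U$ unipotent.
\item \emph{The conjugating element can be taken in $\Gamma$.} Since that torus surjects onto $T_d$, the element of $\widetilde\Gamma$ conjugating the toric torus into it can be replaced by one in $\Gamma$, as you assert.
\end{itemize}
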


\begin{proof}
Fix a faithful closed embedding $\iota \colon G \hookrightarrow \mathrm{GL}(V)$, and let $E = \mathcal Q \times_G V$ be the associated toric vector bundle on $\mathbb P^d$. Since $\mathcal Q|_Y$ splits, its reduction to a maximal torus of $G$ induces a decomposition of $E|_Y$ into a direct sum of line bundles. As $Y \cong \mathbb{P}^q$ and $q \ge 2$, Horrocks' splitting criterion \cite[Chapter~I, Theorem~2.3.2]{OSS} implies that $E$ splits on $\mathbb P^d$.

Next, \cite[Theorem~1.2.3(ii)]{Klyachko} guarantees that $E$ splits equivariantly. It then follows from \cite[Theorem~6.9]{DKBDP} that $\mathcal Q$ itself splits equivariantly.
\end{proof}




We now extend Theorem~\ref{thm:parabolics} to parabolic subgroups of connected reductive algebraic groups over $\mathbb{C}$, establishing a Helly threshold governed by the semisimple rank of $G$.

\begin{theorem}
\label{thm:parabolicsGen}
Let $m=\operatorname{rank}(G^{\mathrm{der}})$ be the semisimple rank of $G$. Let $\{P_\alpha\}_{\alpha\in A}$ be a finite family of parabolic subgroups of $G$. The intersection $\underset{\alpha\in A}{\bigcap}P_\alpha$ contains a common maximal torus of $G$ if and only if the intersection of every subfamily of at most $m+2$ members contains a common maximal torus of $G$.
\end{theorem}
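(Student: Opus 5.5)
We will mimic the proof of Theorem~\ref{thm:parabolics}, replacing vector bundles by principal bundles and the count of summands by the semisimple rank of the structure group. Only the reverse implication needs proof, and we argue by induction on $|A|$. The case $|A|\le m+2$ is trivial. So suppose $|A|=n+1\ge m+3$, and label the parabolics $P_0,\dots,P_n$. By the induction hypothesis, every proper subfamily contains a common maximal torus. Lemma~\ref{lem:toric-construction} then produces a framed toric principal $G$-bundle $\mathcal P$ on $\mathbb P^n$ with ray parabolics $P_0,\dots,P_n$. By the same lemma, it suffices to show that $\mathcal P$ splits equivariantly.

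We then run the restriction procedure. Set $X_0=\mathbb P^n$, $G_0=G$ and $\mathcal Q_0=\mathcal P$. At stage $k$ we have a coordinate subspace $X_k\simeq\mathbb P^{n-k}$, a connected reductive group $G_k$ which is a Levi subgroup of $G$, and a toric principal $G_k$-bundle $\mathcal Q_k$ on $X_k$ (for an effective complementary torus $T_k$) that is a reduction of $\mathcal P|_{X_k}$. We treat two cases.

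\emph{Stopping case.} Suppose every coordinate hyperplane of $X_k$ has its fiber cocharacter in $Z(G_k)$. Then Lemma~\ref{lem:centralizer_restriction}(2) shows that $\mathcal Q_k$ splits equivariantly, provided $\dim X_k\ge 2$. Since $G_k$ is a Levi subgroup of $G$, a maximal torus of $G_k$ is a maximal torus of $G$, so $\mathcal P|_{X_k}$ splits.

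\emph{Refinement case.} Otherwise, pick a hyperplane $D$ whose fiber cocharacter $\lambda$ is noncentral in $G_k$. Lemma~\ref{lem:centralizer_restriction}(1) gives a reduction of $\mathcal Q_k|_D$ to $G_{k+1}=C_{G_k}(\lambda)$. This is again a Levi subgroup of $G$, and its semisimple rank has dropped by at least one.

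Since the semisimple rank starts at $m$ and drops at each refinement, the process stops after some $k\le m$ steps. Once the semisimple rank is $0$, every cocharacter is central, so the stopping case holds automatically.

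The dimension count is the point to check carefully, and it is where the threshold $m+2$ (rather than $m+1$) comes from. We need $\dim X_k=n-k\ge n-m\ge 3\ge 2$ at the stopping stage, so that part (2) of Lemma~\ref{lem:centralizer_restriction} and part (1) at each previous step (which requires $d\ge2$) apply. In fact $n\ge m+2$ gives $\dim X_k\ge 2$ throughout, including after the last restriction. So $\mathcal P|_{X_k}$ splits on a coordinate $\mathbb P^{q}$ with $q\ge 2$, and Lemma~\ref{lem:principal_horrocks} shows that $\mathcal P$ splits equivariantly on $\mathbb P^n$. Lemma~\ref{lem:toric-construction} then yields a maximal torus in $\bigcap_\alpha P_\alpha$.

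The main obstacle is the bookkeeping in the inductive step. One must verify three things:
\begin{enumerate}
\item The restricted reduction is again a toric principal bundle for the effective torus on $D$, so that Lemma~\ref{lem:centralizer_restriction} can be reapplied with $G_k$ in place of $G$. Here $G_k$ is connected reductive because it is a centralizer of a torus.
\item Compared with the $\mathrm{GL}$ case, the extra step is needed because the count now starts at semisimple rank $m$ rather than $m-1$.
\item Splitting of the $G_k$-reduction implies splitting of $\mathcal P|_{X_k}$ as a $G$-bundle. This holds by extension of structure group along $G_k\subset G$, since the maximal tori agree.
\end{enumerate}
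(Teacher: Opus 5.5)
Your proposal is correct and follows essentially the same route as the paper: induction on $|A|$, realization of the family as ray parabolics of a toric principal $G$-bundle on $\mathbb P^n$ via Lemma~\ref{lem:toric-construction}, iterated restriction to coordinate hyperplanes with the structure group replaced by the centralizer of a noncentral fiber cocharacter (Lemma~\ref{lem:centralizer_restriction}), termination after at most $m$ steps, and the Horrocks-type Lemma~\ref{lem:principal_horrocks}. One small slip: from $n\ge m+2$ you only get $n-k\ge n-m\ge 2$, not $\ge 3$, but this is exactly the paper's count $\dim X_k\ge (m+2)-m=2$ and is all that is needed, as your next sentence correctly states.
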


\begin{proof}
We proceed by induction on $|A|$. The base case $|A| \le m+2$ holds trivially. Assume $|A| = n+1 \ge m+3$, and suppose the theorem holds for all smaller families. 

By the inductive hypothesis, every proper subfamily of $\{P_\alpha\}_{\alpha \in A}$ contains a common maximal torus. Indexing the family as $\{P_0, \dots, P_n\}$, Lemma~\ref{lem:toric-construction} yields a framed toric principal $G$-bundle $\mathcal{P}$ on $\mathbb{P}^n$ whose ray parabolics are $P_0, \dots, P_n$. By Lemma~\ref{lem:toric-construction}, the intersection $\bigcap\limits_{\alpha=0}^n P_\alpha$ contains a common maximal torus if and only if $\mathcal{P}$ splits equivariantly.

To establish equivariant splitting, we construct a sequence of linear coordinate subspaces $X_k \cong \mathbb{P}^{n-k} \subset \mathbb{P}^n$, connected reductive subgroups $H_k \subseteq G$, and toric principal $H_k$-bundles $\mathcal{P}_k$ on $X_k$, initialized by
\[
    (X_0, H_0, \mathcal{P}_0) = (\mathbb{P}^n, G, \mathcal{P}).
\]
At stage $k$, the bundle $\mathcal{P}_k$ is a reduction of structure group of $\mathcal{P}|_{X_k}$ to $H_k$.

Suppose the construction has reached stage $k$. If all fiber cocharacters of $\mathcal{P}_k$ associated with the coordinate hyperplanes of $X_k$ take values in $Z(H_k)$, then by Lemma~\ref{lem:centralizer_restriction}(2), $\mathcal{P}_k$ splits equivariantly, and the construction terminates at step $k$. Otherwise, choose a coordinate hyperplane $X_{k+1} \subset X_k$ whose associated fiber cocharacter $\lambda_k$ is noncentral in $H_k$, and set
\[
    H_{k+1} = C_{H_k}(\lambda_k(\mathbb{G}_m)).
\]
By Lemma~\ref{lem:centralizer_restriction}(1), the restriction $\mathcal{P}_k|_{X_{k+1}}$ admits a reduction of structure group to $H_{k+1}$, giving rise to a toric principal $H_{k+1}$-bundle $\mathcal{P}_{k+1}$ on $X_{k+1} \cong \mathbb{P}^{n-(k+1)}$, where toricity is defined with respect to a complementary torus $T' \subset T_n$, which acts effectively on $X_{k+1}$.

Let $r_k = \operatorname{rank}(H_k^{\mathrm{der}})$ denote the semisimple rank of $H_k$, so that $r_0 = m$. As $\lambda_k$ is noncentral in $H_k$, Lemma~\ref{lem:centralizer_restriction}(1) implies that $r_{k+1} \le r_k - 1$. Consequently, this process must terminate after at most $k \le m$ steps, since $r_m = 0$ implies that $H_m$ is a torus, in which case $\mathcal{P}_m$ is automatically split.

At the termination step $k \le m$, the dimension of $X_k$ satisfies
\[
    \dim X_k = n - k \ge (m + 2) - m = 2.
\]
Because $\mathcal{P}_k$ splits equivariantly, the bundle $\mathcal{P}|_{X_k}$ admits a reduction of structure group to a torus in $G$, and hence splits. Since $\dim (X_k) \ge 2$, Lemma~\ref{lem:principal_horrocks} implies that $\mathcal{P}$ splits equivariantly on all of $\mathbb{P}^n$. By Lemma~\ref{lem:toric-construction}, it follows that $\bigcap\limits_{\alpha=0}^n P_\alpha$ contains a common maximal torus of $G$.
\end{proof}

\begin{rmk}
\label{rmk:gl_specialization}
For $G = \operatorname{GL}(W)$, where $W$ is an $m$-dimensional complex vector space, the derived subgroup is $G^{\mathrm{der}} = \operatorname{SL}(W)$ with semisimple rank $\operatorname{rk}(G_{\mathrm{der}}) = m - 1$. The Helly threshold in Theorem~\ref{thm:parabolicsGen} thus becomes $(m - 1) + 2 = m + 1$, recovering Klyachko's original bound in Theorem~\ref{thm:klyachko-helly-flags}.
\end{rmk}

\begin{rmk}
\label{rmk:classical_groups_flags}
For other classical groups, such as the symplectic group $\operatorname{Sp}(2m, \mathbb{C})$ or the orthogonal groups $\operatorname{SO}(N, \mathbb{C})$ (with $N = 2m$ or $2m+1$ and $m \ge 2$), parabolic subgroups are geometrically realized as stabilizers of isotropic flags. Under this realization, Theorem~\ref{thm:parabolicsGen} yields a Helly-type property for isotropic flags: a finite collection of isotropic flags shares a common adapted basis compatible with the underlying non-degenerate bilinear form if and only if every subcollection of at most $m+2$ flags shares such a basis, where $m$ is the semisimple rank of $G$.
\end{rmk}

\section{Helly's Number for Connected Reductive Groups}
\label{subsec:helly-by-type}

Following \cite[Section~6]{HK}, we recall the notion of Helly's number for parabolic subgroups.

\begin{definition}
\label{defn:helly_number}
The \emph{Helly number} of $G$, denoted $h(G)$, is the least positive integer $h$ such that a finite family of parabolic subgroups of $G$ contains a common maximal torus whenever every subfamily of at most $h$ members contains a common maximal torus.
\end{definition}


We now examine the sharpness of Theorem~\ref{thm:parabolicsGen}. The first point is that the Helly number is insensitive to central isogenies and behaves as a maximum, rather than a sum, under direct products.

\begin{prop}
\label{prop:helly-central-products}
Let $G$ be a connected reductive complex algebraic group of positive
semisimple rank. Let
\[
  \widetilde{G}^{\mathrm{der}}
  =G_1\times\cdots\times G_s
\]
be the decomposition of the simply connected cover of
$G^{\mathrm{der}}$ into its almost simple factors. Then
\begin{equation}
\label{eq:helly-product-formula}
  h(G)=\max_{1\leq i\leq s}h(G_i).
\end{equation}
In particular, if $r_i=\operatorname{rank}(G_i)$, then
\begin{equation}
\label{eq:factorwise-rank-upper-bound}
  h(G)\leq 2+\max_{1\leq i\leq s}r_i.
\end{equation}
\end{prop}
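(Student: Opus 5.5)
Our plan is to reduce the statement, in three steps, to the combinatorics of the spherical building. First we show that $h(G)$ depends only on $G^{\mathrm{der}}$ up to central isogeny. Then we pass to the simply connected cover. Finally we treat the product factor by factor.

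\emph{Step 1: central isogeny invariance.} Write $G=Z(G)^0\cdot G^{\mathrm{der}}$. Every parabolic $P$ of $G$ contains $Z(G)^0$, and $P\mapsto P\cap G^{\mathrm{der}}$ is a bijection onto the parabolics of $G^{\mathrm{der}}$. The same map $T\mapsto T\cap G^{\mathrm{der}}$ is a bijection on maximal tori, with inverse $T'\mapsto Z(G)^0T'$. These bijections preserve inclusions $T\subseteq P$. Similarly, for a central isogeny $\pi\colon\widetilde G\to G'$, the maps $P\mapsto\pi^{-1}(P)$ and $T\mapsto\pi^{-1}(T)^0$ are inclusion-preserving bijections, because $\ker\pi$ is central and hence contained in every parabolic and every maximal torus. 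The incidence structure (parabolics, maximal tori, containment) is therefore unchanged, so $h(G)=h(\widetilde G^{\mathrm{der}})$.

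\emph{Step 2: products.} For $\widetilde G^{\mathrm{der}}=G_1\times\cdots\times G_s$, parabolics are exactly products $Q=Q_1\times\cdots\times Q_s$ and maximal tori are products $T_1\times\cdots\times T_s$. Moreover $T\subseteq Q$ if and only if $T_i\subseteq Q_i$ for all $i$. Hence a family $\{Q^\alpha\}$ has a common maximal torus if and only if, for each $i$, the projected family $\{Q^\alpha_i\}$ does.

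\emph{Step 3: comparing the numbers.} For $h(G)\le\max_i h(G_i)=:h$, suppose every subfamily of size at most $h$ of $\{Q^\alpha\}$ has a common torus. Then every subfamily of size at most $h\ge h(G_i)$ of each projected family has one, so each projected family has a common torus, and so does the full family. For $h(G)\ge h(G_j)$ for each $j$, take a family $\{Q^\alpha_j\}$ in $G_j$ witnessing that $h(G_j)-1$ does not suffice. Embed it as $Q^\alpha=G_1\times\cdots\times Q^\alpha_j\times\cdots\times G_s$, which is parabolic because $G_i$ is a (trivial) parabolic of itself. Small subfamilies then have common tori but the whole family does not, so the threshold for $G$ is at least $h(G_j)$.

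One subtlety needs checking: the minimality in Definition~\ref{defn:helly_number} must behave monotonically. If threshold $h$ works, then so does any $h'\ge h$, since the hypothesis for $h'$ implies the hypothesis for $h$. This makes $h(G_j)-1$ genuinely fail whenever $h(G_j)\ge2$. When $h(G_j)=1$ there is nothing to prove, since $h(G)\ge1$ by definition.

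\emph{Step 4: the bound.} Apply Theorem~\ref{thm:parabolicsGen} to each $G_i$, which has semisimple rank $r_i$. This gives $h(G_i)\le r_i+2$, and \eqref{eq:factorwise-rank-upper-bound} follows from \eqref{eq:helly-product-formula}.

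The main obstacle is the bookkeeping in Step 1: verifying that preimages of maximal tori under the central isogeny are again maximal tori up to identity components, and that every parabolic contains the center, so that containment is exactly preserved. The classical facts that $Z(G)\subseteq$ every maximal torus of a connected reductive group and that parabolics contain a Borel should handle this.
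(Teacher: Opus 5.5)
Your proposal is correct and follows essentially the same route as the paper: remove the connected center and pass through the central isogeny to the product of almost simple factors, project factorwise to get the upper bound, and pad a minimally incompatible family in one factor with the whole group in the other factors to get the lower bound. Your explicit bijections ($P\mapsto P\cap G^{\mathrm{der}}$, $P\mapsto\pi^{-1}(P)$) and your monotonicity check on the Helly threshold fill in details that the paper leaves implicit.
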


\begin{proof}
A central isogeny induces bijections on parabolic subgroups and on maximal tori, preserving containment. Hence it does not change the Helly number. The connected center of a reductive group is contained in every parabolic subgroup and likewise does not affect whether an intersection contains a maximal torus. It therefore suffices to consider the direct product $G_1\times\cdots\times G_s$.

Every parabolic subgroup and every maximal torus of this product has the form
\[
  P_1\times\cdots\times P_s,
  \qquad
  T_1\times\cdots\times T_s,
\]
respectively. A family of parabolic subgroups of the product has a common maximal torus if and only if its projected family has a common maximal torus in every factor. This proves the upper bound
\[
h(G)\leq\max_i h(G_i).
\]

Conversely, a minimally incompatible family in one factor $G_i$ can be regarded as a family in the product by taking the whole group in all other factors. This proves the reverse inequality and hence \eqref{eq:helly-product-formula}. Applying Theorem~\ref{thm:parabolicsGen} to each $G_i$ gives \eqref{eq:factorwise-rank-upper-bound}.
\end{proof}

We will also use monotonicity with respect to closed reductive
subgroups.

\begin{lem}
\label{lem:helly-subgroup-monotonicity}
If $H\subseteq G$ are connected reductive complex algebraic groups,
then
\[
  h(H)\leq h(G).
\]
\end{lem}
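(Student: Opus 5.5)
The plan is to transfer a family of parabolics of $H$ to a family in $G$ through the toric bundle of Lemma~\ref{lem:toric-construction}, rather than pulling parabolics of $G$ back to $H$. The direct approach is awkward: a parabolic $Q=P_H(\lambda)$ of $H$ determines many parabolics $P_G(\lambda)$ of $G$, depending on the choice of $\lambda$.

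Set $h=h(G)$ and let $\{Q_\alpha\}_{\alpha\in A}$ be a finite family of parabolic subgroups of $H$ in which every subfamily of at most $h$ members contains a common maximal torus of $H$. I will argue by induction on $|A|$; the case $|A|\le h$ is trivial. If $|A|=n+1>h$, the inductive hypothesis gives a common maximal torus of $H$ for every proper subfamily. Lemma~\ref{lem:toric-construction}, applied to $H$, then produces a framed toric principal $H$-bundle $\mathcal Q$ on $\mathbb P^n$ with ray parabolics $Q_0,\dots,Q_n$. The family has a common maximal torus if and only if $\mathcal Q$ splits equivariantly. Fix cocharacters $\lambda_\alpha$ of $H$ with $P_H(\lambda_\alpha)=Q_\alpha$; these are the ray cocharacters of $\mathcal Q$.

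Next, extend the structure group to $\mathcal P=\mathcal Q\times_H G$. This is a toric principal $G$-bundle whose local data $\rho_\sigma\colon T_n\to A_\sigma\subseteq H\subseteq G$ are the same. Its ray parabolics are $P_\alpha:=P_G(\lambda_\alpha)$. For each maximal cone $\sigma$, the torus $A_\sigma\subseteq H$ contains the relevant $\lambda_\alpha$ and lies in a maximal torus $A'_\sigma$ of $G$, and $A'_\sigma\subseteq P_G(\lambda_\alpha)$. Hence every proper subfamily of $\{P_\alpha\}$ shares a maximal torus of $G$. Each subfamily of at most $h$ members is proper because $n+1>h$. So by the definition of $h(G)$ the whole family $\{P_\alpha\}$ shares a maximal torus of $G$, and Lemma~\ref{lem:toric-construction} for $G$ shows that $\mathcal P$ splits equivariantly.

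The main obstacle is descending equivariant splitting from $\mathcal P$ back to $\mathcal Q$, since a torus reduction of $\mathcal P$ need not lie inside $H$. I will handle this exactly as in the proof of Lemma~\ref{lem:principal_horrocks}. Fix a faithful closed embedding $G\hookrightarrow \operatorname{GL}(V)$; it restricts to a faithful embedding of $H$. The associated vector bundles agree: $\mathcal Q\times_H V=\mathcal P\times_G V$. Since $\mathcal P$ splits equivariantly, this vector bundle is an equivariant sum of toric line bundles. By \cite[Theorem~6.9]{DKBDP}, applied now to the group $H$, equivariant splitting of the associated vector bundle under a faithful representation forces $\mathcal Q$ to split equivariantly. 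Lemma~\ref{lem:toric-construction} then yields a common maximal torus of $H$ in $\bigcap_\alpha Q_\alpha$, completing the induction and giving $h(H)\le h(G)$. The point to check carefully is that \cite[Theorem~6.9]{DKBDP} applies to an arbitrary connected reductive group and faithful representation, exactly as it was used for $G$ in Lemma~\ref{lem:principal_horrocks}.
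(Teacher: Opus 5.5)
Your proof is correct and is essentially the paper's argument run in the contrapositive direction. The paper takes a minimally incompatible family of $h(H)$ parabolics of $H$, realizes it on $\mathbb P^{h(H)-1}$ via Lemma~\ref{lem:toric-construction}, extends the structure group to $G$, and uses \cite[Corollary~5.6]{HK} to see that the extension stays nonsplit, so its ray parabolics form a minimally incompatible family of the same size in $G$. Your descent step, through the common associated vector bundle and \cite[Theorem~6.9]{DKBDP}, simply derives that corollary from the result the paper already applies to arbitrary connected reductive groups in Lemma~\ref{lem:principal_horrocks}. Your identification of the ray parabolics of $\mathcal Q\times_H G$ with $P_G(\lambda_\alpha)$ is also sound: since $P_H(\lambda)=P_G(\lambda)\cap H$, replacing $\lambda_\alpha$ by a $Q_\alpha$-conjugate does not change $P_G(\lambda_\alpha)$.
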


\begin{proof}
If $H$ is a torus, then $h(H) = 1 \le h(G)$.
Suppose that $H$ has positive semisimple rank and set $q = h(H)$.
By definition, $H$ satisfies the $q$-Helly property but fails the $(q-1)$-Helly property. The failure of the $(q-1)$-Helly property yields an incompatible family of parabolic subgroups of $H$ all of whose subfamilies of size at most $q-1$ are compatible. An inclusion-minimal incompatible subfamily of this collection must have size at least $q$, while the $q$-Helly property forces its size to be at most $q$. Hence, its size is exactly $q$, giving a minimally incompatible family of $q$ parabolic subgroups.

By Lemma~\ref{lem:toric-construction}, this family is realized by a toric principal $H$-bundle $\mathcal E$ on $\mathbb P^{q-1}$ that does not split equivariantly.
Extend the structure group to obtain $\mathcal E_G = \mathcal E \times^H G$. By \cite[Corollary~5.6]{HK}, $\mathcal E_G$ does not split equivariantly.

Let $Q_1, \ldots, Q_q$ be its ray parabolics.
Every proper subset of the rays of the fan of $\mathbb P^{q-1}$ is contained in a cone. The local compatibility condition therefore implies that every proper subfamily of the $Q_i$ contains a common maximal torus of $G$. If all the $Q_i$ contained a common maximal torus, the equivariant splitting criterion would split $\mathcal E_G$, a contradiction. Thus, the $Q_i$ form a minimally incompatible family of size $q$, showing that $h(G) \ge q$.
\end{proof}

For classical groups of types $B_m$ and $C_m$, demonstrating the sharpness of the bounds on the respective Helly numbers requires a configuration distinct from the standard linearly dependent family of isotropic lines. Recall that an apartment in the building of a classical group is uniquely determined by a polar frame. Consequently, a family of isotropic subspaces lies in a common apartment if and only if all of its members are coordinate subspaces relative to a single polar frame.

\begin{prop}
\label{prop:helly-BC-sharp}
For every $m\geq1$,
\[
  h\bigl(\operatorname{Sp}_{2m}\bigr)=m+2,
  \qquad
  h\bigl(\operatorname{SO}_{2m+1}\bigr)=m+2.
\]
\end{prop}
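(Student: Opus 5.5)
The upper bound $h\le m+2$ is Theorem~\ref{thm:parabolicsGen}, since both groups are almost simple of rank $m$. So the whole task is to exhibit, for each $m\ge1$, a family of $m+2$ parabolic subgroups that is \emph{minimally incompatible}: every $m+1$ of them share a maximal torus (equivalently, lie in a common apartment), but all $m+2$ do not. By the remark before the statement, I would phrase everything in terms of isotropic subspaces and polar frames. A polar frame is a decomposition into lines $\ell_1,\ell_1',\dots,\ell_m,\ell_m'$, with $\ell_i,\ell_i'$ paired hyperbolically and orthogonal to all other pairs, plus one anisotropic line in the odd orthogonal case. The parabolics I use are stabilizers of isotropic subspaces, typically lines, and "containing a common maximal torus" means being coordinate subspaces for a single polar frame.

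\textbf{Type $C_m$.} I would take the $m+2$ isotropic lines (every line is isotropic for a symplectic form)
\[
  e_1,\ \dots,\ e_m,\ \ f_1+\cdots+f_m,\ \ e_1+\cdots+e_m+c\,(f_1+\cdots+f_m)
\]
in the standard symplectic basis, with scalars tuned so that each omitted member leaves a configuration extendable to a polar frame. The plan is to replace the naive "$m+1$ lines in general position in an $m$-space" obstruction with a configuration that uses both halves of the hyperbolic pairs. A collection of isotropic lines is frame-adapted exactly when it splits into at most $m$ hyperbolic pairs plus mutually orthogonal lines. So incompatibility of the full family should come from a counting or orthogonality contradiction: too many mutually non-orthogonal lines among pairwise-linked members. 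I expect to refine the example, for instance by using a cyclic chain $L_0,\dots,L_{m+1}$ in which consecutive members are non-orthogonal and others orthogonal. With a cyclic chain, deleting any one member yields a path. A path can be realized in a frame by alternately pairing vertices and leaving the rest orthogonal, and this fits in dimension $2m$ when the path has $m+1$ vertices. The full cycle of length $m+2$ cannot be realized, since a line in a frame is non-orthogonal to exactly one other frame line, forcing each vertex to have degree at most $1$ among frame lines. The actual construction must also let some members be higher-dimensional isotropic subspaces spanned by frame vectors, so that "degree" counts correctly. I would therefore allow members to be isotropic planes, which are stabilizers of other maximal parabolics.

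\textbf{Type $B_m$.} For $\operatorname{SO}_{2m+1}$ I would reuse the same isotropic configuration inside the hyperbolic part $\langle e_i,f_i\rangle\subset\mathbb C^{2m+1}$. Compatibility of subfamilies transfers by adjoining the anisotropic vector to the frame. Incompatibility transfers because any polar frame adapted to subspaces of the hyperbolic part can be projected or modified to a frame of that part, which needs a short lemma. Alternatively, I could use Lemma~\ref{lem:helly-subgroup-monotonicity} with a rank-$m$ subgroup of known Helly number: $\operatorname{GL}_m\subset\operatorname{Sp}_{2m}$ only gives $h\ge m+1$, so a direct construction is needed.

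\textbf{Main obstacle.} The hardest step is verifying that the full family is incompatible while every $(m+1)$-subfamily is compatible. I would do this by first proving a combinatorial criterion: a family of isotropic subspaces lies in a common apartment if and only if there is a polar frame whose vectors can be assigned to the subspaces consistently with the pairing pattern. I would then check the cyclic chain against it. I would handle small $m$ ($m=1,2$) by hand first to calibrate the construction.
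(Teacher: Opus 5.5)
Your upper bound via Theorem~\ref{thm:parabolicsGen} is fine. The lower bound, however, is the whole content of the statement, and you never produce it; the configurations you sketch cannot work.

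\textbf{Type $C_m$.} Your first family is not minimal for $m\ge2$. Its subfamily $\mathbb{C}e_1,\dots,\mathbb{C}e_m,\mathbb{C}(f_1+\cdots+f_m)$ is already incompatible: $f_1+\cdots+f_m$ pairs nontrivially with every $e_i$, while a frame line is non-orthogonal to exactly one other frame line. The cyclic chain is defeated by the very degree argument you use against the full cycle. An interior vertex of a path of $m+1\ge3$ lines is non-orthogonal to two members, so ``alternately pairing vertices'' does not realize a path.

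More decisively, for $m\ge2$ no family of $m+2$ isotropic lines is minimally incompatible in $\operatorname{Sp}_{2m}$:
\begin{itemize}
\item With at least four members, a vertex of degree $\ge2$ in the non-orthogonality graph survives in some $(m+1)$-subfamily. So the graph must be a matching.
\item Linearly independent lines with a matching graph extend to a polar frame by Witt's theorem. Your criterion omits independence, but it is essential here. So the obstruction must be a relation $\sum_i c_iv_i=0$ with all $c_i\neq0$.
\item Pairing this relation with a vector $v_j$ matched to $v_{j'}$ gives $c_{j'}\,\omega(v_{j'},v_j)=0$, which is impossible. So there are no matched pairs.
\item Then $m+1$ of the lines would span a totally isotropic subspace of dimension $m+1$, which cannot exist.
\end{itemize}
Thus higher-dimensional members are forced, not an optional refinement, and you never say which ones.

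The missing idea is the paper's pair of Lagrangians. Take $f_a=\sum_i a_if_i$ with all $a_i\neq0$, and $E_a=\{v+\omega(v,f_a)f_a\mid v\in E\}$. Then the family $\mathbb{C}f_1,\dots,\mathbb{C}f_m,E,E_a$ works. The lines together with $E$ force the standard frame, in which $E$ is the only coordinate Lagrangian transverse to $F$; but $E_a\neq E$ is also transverse to $F$. Each deletion has an explicit frame. For example, when $\mathbb{C}f_k$ is omitted, use $(e_i-\tfrac{a_i}{a_k}e_k,\,f_i)_{i\neq k}$ together with $\bigl(e_k,(e_k+a_kf_a)/a_k^2\bigr)$.

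\textbf{Type $B_m$.} Your plan has a structural flaw, beyond the fact that the symplectic configuration is not isotropic for a symmetric form. The ``short lemma'' you need is actually true. If isotropic subspaces of the hyperbolic part $H=\langle e_i,f_i\rangle$ are coordinate for a polar frame of $\mathbb{C}^{2m+1}$, the frame lines spanning them are isotropic and lie in $H$. Witt's theorem inside $H$ then completes them to a polar frame of $H$.

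With the easy converse (adjoin $z$), compatibility in $\operatorname{SO}_{2m+1}$ and in $\operatorname{SO}(H)\cong\operatorname{SO}_{2m}$ become equivalent for families inside $H$. So any configuration contained in $H$ that is minimally incompatible for $\operatorname{SO}_{2m+1}$ is minimally incompatible for $\operatorname{SO}_{2m}$. Your transfer therefore requires a minimally incompatible family of size $m+2$ for $\operatorname{SO}_{2m}$. That is impossible for $m=1$, where $\operatorname{SO}_2$ is a torus, and for $m=2$, where $\operatorname{SO}_4$ has type $A_1\times A_1$ and Helly number $3$. For $m\ge4$ it would settle $h(D_m)=m+2$, which the paper leaves open.

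This is why the paper's odd orthogonal family must leave $H$. It uses $E_a=\{x+\ell(x)z-\ell(x)f_a\mid x\in E\}$, built with the anisotropic vector $z$.
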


\begin{proof}
The upper bounds follow from
Theorem~\ref{thm:parabolicsGen}. We construct minimally
incompatible families of $m+2$ parabolic subgroups.

First let $V=E\oplus F$ be a $2m$-dimensional symplectic vector
space with bases
\[
  E=\langle e_1,\ldots,e_m\rangle,
  \qquad
  F=\langle f_1,\ldots,f_m\rangle,
\]
such that
\[
  \omega(e_i,f_j)=\delta_{ij},
  \qquad
  \omega(e_i,e_j)=\omega(f_i,f_j)=0.
\]
Choose $a_1,\ldots,a_m\in\mathbb C^\times$ and put
\[
  f_a=\sum_{i=1}^m a_i f_i,
  \qquad
  \ell(v)\coloneqq\omega(v,f_a),
  \qquad
  E_a=\{v+\ell(v)f_a\mid v\in E\}.
\]
The subspace $E_a$ is Lagrangian. Consider the $m+2$ parabolic subgroups stabilizing the isotropic subspaces

\begin{equation}
\label{eq:symplectic-minimal-obstruction}
  \mathbb C \langle f_1\rangle,\ldots,\mathbb C \langle f_m\rangle,
  \qquad E,\qquad E_a.
\end{equation}

Every proper subfamily of \eqref{eq:symplectic-minimal-obstruction}
shares an apartment. It suffices to consider subfamilies obtained by deleting one member, since every smaller subfamily is contained in one of these.

If $E_a$ is omitted, use the standard frame
$(e_i,f_i)_{i\in\{1,\dots,m\}}$. If $E$ is omitted, use the frame
\[
  \bigl(e_i+a_i f_a,f_i\bigr)_{i\in\{1,\dots,m\}}.
\]
Finally, suppose that $\mathbb{C}\langle f_k\rangle$ is omitted. For each $i \neq k$, set
\[
  h_i \coloneqq e_i - \frac{a_i}{a_k} e_k.
\]
Then
\begin{equation}
\label{eq:symplectic-deleted-frame}
  (h_i, f_i)_{i \neq k}, \qquad \left(e_k, \frac{e_k + a_k f_a}{a_k^2}\right)
\end{equation}
forms a symplectic frame. In this frame, the previously defined subspaces satisfy
\[
  E=\operatorname{span}\bigl(\{h_i:i\ne k\}\cup\{e_k\}\bigr),
  \qquad
  E_a=\operatorname{span}\bigl(\{h_i:i\ne k\}
                    \cup\{e_k+a_kf_a\}\bigr).
\]
Then both $E$ and $E_a$ are coordinate Lagrangians for this frame. 

However, the entire family cannot lie in a common apartment. Indeed, any apartment containing all the isotropic lines $\mathbb{C}f_i$ (and hence their span $F$) together with the Lagrangian $E$ must be determined by the canonical frame $(e_i, f_i)_{i=1}^m$. Relative to this frame, $E$ is the unique coordinate Lagrangian that is transverse to $F$. However, $E_a$ is also transverse to $F$ and $E_a \neq E$, so $E_a$ cannot be a coordinate Lagrangian for this frame.

Consequently, no single apartment contains the entire family. Since every proper subfamily lies in an apartment, the collection in \eqref{eq:symplectic-minimal-obstruction} is minimally incompatible, establishing the equality
\[
  h(\operatorname{Sp}_{2m}) = m + 2.
\]

For the odd orthogonal group, let
\[
  V=E\oplus F\oplus\mathbb C z
\]
and equip $V$ with the symmetric bilinear form determined by
\[
  (e_i,f_j)=\delta_{ij},
  \qquad
  (z,z)=2,
\]
all other pairings between the displayed basis vectors being zero.
With $f_a$ and $\ell$ as above, define
\[
  E_a=
  \{x+\ell(x)z-\ell(x)f_a\mid x\in E\}.
\]
A direct computation shows that $E_a$ is maximal isotropic.
Again consider the stabilizers of
\[
  \mathbb C \langle f_1\rangle ,\ldots,\mathbb C \langle f_m\rangle ,
  \qquad E,\qquad E_a.
\]
If $E_a$ is omitted, use the standard orthogonal frame. If $E$ is
omitted, use the hyperbolic pairs
\[
  \bigl(e_i+a_i z-a_i f_a,f_i\bigr)_{i\in\{1,\dots,m\}}.
\]
Their orthogonal complement is a nonisotropic line. If
$\mathbb C \langle f_k\rangle$ is omitted, use
\[
  (h_i,f_i)_{i\neq k},
  \qquad
 \left(e_k,-\frac{e_k+a_kz-a_kf_a}{a_k^2}\right),
\]
together with the nonisotropic orthogonal complement. These are orthogonal frames containing all the remaining subspaces.

As in the symplectic case, all the lines $\mathbb C\langle f_i\rangle$ together with $E$ force the partner lines $\mathbb C e_i$, whereas the distinct maximal isotropic subspace $E_a$, which is transverse to $F$, is not coordinate for this frame. Hence the whole family is not contained in an apartment, and $h(\operatorname{SO}_{2m+1})= m+2$.
\end{proof}

Recall that Dynkin's classification of maximal subgroup inclusions yields the following maximal-rank subgroup inclusions (up to central isogeny):
\begin{equation}
\label{eq:maximal-rank-subgroups}
  A_2 \subset G_2,
  \qquad
  B_4 \subset F_4,
  \qquad
  A_5 \times A_1 \subset E_6,
  \qquad
  A_7 \subset E_7,
  \qquad
  A_8 \subset E_8;
\end{equation}
see, for example, \cite{Dyn57}. We can now combine the above subgroup inclusions and explicit construction to determine or bound the Helly numbers for all almost simple connected complex algebraic groups.

\begin{cor}
\label{cor:helly-values-by-type}
Let $G$ be an almost simple connected complex algebraic group of semisimple rank $m$. The exact values and bounds for the Helly number $h(G)$ across all Lie types are summarized in Table~\ref{tab:helly-values-by-type}.

\begin{table}[htbp]
\centering
\begingroup
\normalfont
\begin{tabular}{c|c|l}
\hline
\textup{Type of $G$} & \textup{Helly number $h(G)$} & \textup{Reason for the lower bound}\\
\hline
$A_m$ & $h(G) = m+2$ & Line configuration in $\mathbb C^{m+1}$\\
$B_m$ & $h(G) = m+2$ & Proposition~\ref{prop:helly-BC-sharp}\\
$C_m$ & $h(G) = m+2$ & Proposition~\ref{prop:helly-BC-sharp}\\
$D_m$, $m \ge 4$
  & $m+1 \le h(G) \le m+2$
  & Inclusion \(\mathrm{GL}_m\subset\mathrm{SO}_{2m}\)\\
$G_2$ & $h(G) = 4$ & Inclusion $A_2 \subset G_2$\\
$F_4$ & $h(G) = 6$ & Inclusion $B_4 \subset F_4$\\
$E_6$ & $7 \le h(G) \le 8$ & Inclusion $A_5 \times A_1 \subset E_6$\\
$E_7$ & $h(G) = 9$ & Inclusion $A_7 \subset E_7$\\
$E_8$ & $h(G) = 10$ & Inclusion $A_8 \subset E_8$\\
\hline
\end{tabular}
\endgroup
\caption{Helly numbers $h(G)$ for almost simple complex algebraic groups $G$ of semisimple rank $m$.}
\label{tab:helly-values-by-type}
\end{table}

Thus, the only remaining gaps for almost simple groups are
\begin{equation}
\label{eq:remaining-simple-gaps}
  h(D_m) \in \{m+1, m+2\} \quad (m \ge 4),
  \qquad
  h(E_6) \in \{7, 8\}.
\end{equation}
\end{cor}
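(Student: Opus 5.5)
The table needs upper bounds and lower bounds. For every row, the upper bound $h(G)\le m+2$ comes from Theorem~\ref{thm:parabolicsGen}, since $G$ has semisimple rank $m$. The lower bounds all come from Lemma~\ref{lem:helly-subgroup-monotonicity} together with known values for subgroups. Proposition~\ref{prop:helly-central-products} lets us work up to central isogeny throughout. Since the upper bound is already $m+2$, it suffices in each exact case to show $h(G)\ge m+2$.

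\textbf{Type $A_m$.} Work in $\operatorname{GL}_{m+1}$, which has the same Helly number by Proposition~\ref{prop:helly-central-products}. Take $m+2$ lines in general position in $\mathbb C^{m+1}$: the coordinate lines $\mathbb C e_1,\dots,\mathbb C e_{m+1}$ together with $\mathbb C(e_1+\cdots+e_{m+1})$.
\begin{itemize}
\item Any $m+1$ of these lines are linearly independent, so they form a basis. That basis is adapted to those lines, so their stabilizers (maximal parabolics) share a torus.
\item Lines adapted to one basis must be distinct spans of basis vectors, and there are only $m+1$ basis vectors. So all $m+2$ lines cannot be adapted to a single basis.
\end{itemize}
This is a minimally incompatible family of size $m+2$, so $h\ge m+2$.

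\textbf{Types $B_m$ and $C_m$.} Proposition~\ref{prop:helly-BC-sharp} gives the result directly, again invoking isogeny invariance.

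\textbf{Type $D_m$.} A Levi subgroup $\operatorname{GL}_m\subset\operatorname{SO}_{2m}$ is the stabilizer of a pair of transverse Lagrangians. It has Helly number $h(\operatorname{GL}_m)=h(A_{m-1})=m+1$ by the type $A$ case, and monotonicity gives $h\ge m+1$.

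\textbf{Exceptional types.} Use the maximal-rank inclusions \eqref{eq:maximal-rank-subgroups}, which are reductive subgroups up to isogeny, together with the values already computed.
\begin{itemize}
\item $G_2$: $h(A_2)=4=m+2$.
\item $F_4$: $h(B_4)=6=m+2$.
\item $E_7$: $h(A_7)=9=m+2$.
\item $E_8$: $h(A_8)=10=m+2$.
\item $E_6$: Proposition~\ref{prop:helly-central-products} gives $h(A_5\times A_1)=\max(7,3)=7$, so $h(E_6)\ge 7$.
\end{itemize}
In each case the subgroup is a connected reductive subgroup of $G$, so Lemma~\ref{lem:helly-subgroup-monotonicity} applies.

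\textbf{Main subtlety.} The delicate point is applying Lemma~\ref{lem:helly-subgroup-monotonicity} when the subgroup is only given up to central isogeny. Here one notes that the actual subgroup of $G$ is a connected reductive group isogenous to the named one, and its Helly number agrees with the named group's by Proposition~\ref{prop:helly-central-products}. The remaining checks are the general-position verification in type $A$ and this bookkeeping.
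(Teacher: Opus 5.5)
Your proposal is correct and follows essentially the same route as the paper: the upper bound $m+2$ from Theorem~\ref{thm:parabolicsGen}; the $m+2$ lines $\mathbb{C}e_1,\dots,\mathbb{C}e_{m+1},\mathbb{C}(e_1+\cdots+e_{m+1})$ for type $A$; Proposition~\ref{prop:helly-BC-sharp} for types $B$ and $C$; and Lemma~\ref{lem:helly-subgroup-monotonicity} applied to $\operatorname{GL}_m\subset\operatorname{SO}_{2m}$ and to the maximal-rank inclusions \eqref{eq:maximal-rank-subgroups}, using the product formula for $A_5\times A_1$. The only difference is cosmetic: you justify isogeny invariance throughout via Proposition~\ref{prop:helly-central-products}, whereas the paper appeals to the common Tits building in type $A$ and leaves the isogeny bookkeeping for the subgroup inclusions implicit.
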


\begin{proof}
Theorem~\ref{thm:parabolicsGen} establishes the upper bound $h(G) \le m+2$ for $G$ of type $A_m$. To prove the reverse inequality, consider the following $m+2$ lines in $\mathbb{C}^{m+1}$:
\[
  \mathbb{C}\langle e_1\rangle, \dots, \mathbb{C}\langle e_{m+1}\rangle, \quad \text{and} \quad \mathbb{C}\langle e_1 + \dots + e_{m+1}\rangle,
\]
where $\{e_1, \dots, e_{m+1}\}$ denotes the standard basis of $\mathbb{C}^{m+1}$. While any subcollection of $m+1$ lines is adapted to a common coordinate basis, the complete set of $m+2$ lines cannot serve as simultaneous coordinate lines for any single basis. Consequently, $h(G) = m+2$.

Because the algebraic groups $\operatorname{GL}_{m+1}$, $\operatorname{SL}_{m+1}$, and $\operatorname{PGL}_{m+1}$ share the same underlying spherical Tits building, their Helly numbers are identical:
\[
  h(\operatorname{GL}_{m+1}) = h(\operatorname{SL}_{m+1}) = h(\operatorname{PGL}_{m+1}) = m+2.
\]
Having settled type $A_m$, it remains to address the classical type $D_m$ ($m \ge 4$) and the exceptional Lie types.

The general upper bounds $h(G) \le m+2$ are supplied by Theorem~\ref{thm:parabolicsGen}. Lemma~\ref{lem:helly-subgroup-monotonicity}, the product formula \eqref{eq:helly-product-formula}, and the maximal-rank inclusions \eqref{eq:maximal-rank-subgroups} yield the lower bounds:
\[
\begin{aligned}
  h(G_2) &\ge h(A_2) = 4,\\
  h(F_4) &\ge h(B_4) = 6,\\
  h(E_6) &\ge h(A_5 \times A_1) = \max\{h(A_5), h(A_1)\} = \max\{7, 3\} = 7,\\
  h(E_7) &\ge h(A_7) = 9,\\
  h(E_8) &\ge h(A_8) = 10,
\end{aligned}
\]
where $h(X)$ denotes $h(G)$ for a group $G$ of type $X$.
Comparing these lower bounds with the rank upper bound $m+2$ determines every asserted exceptional value and restricts $h(E_6)$ to the set $\{7, 8\}$.

For groups of type $D_m$ ($m \ge 4$), consider the standard closed embedding
\[
  \operatorname{GL}_m \hookrightarrow \operatorname{SO}_{2m}, \qquad A \longmapsto \begin{pmatrix} A & 0 \\ 0 & (A^{-1})^{\mathsf{T}} \end{pmatrix},
\]
defined relative to the split symmetric form on $\mathbb{C}^m \oplus (\mathbb{C}^m)^*$. Applying Lemma~\ref{lem:helly-subgroup-monotonicity} to this embedding yields the lower bound
\[
  h(D_m) \ge h(\operatorname{GL}_m) = m + 1,
\]
while Theorem~\ref{thm:parabolicsGen} guarantees $h(D_m) \le m + 2$. 

For low-rank cases, note that $D_3 \cong A_3$, which gives $h(D_3) = 5$. Type $D_2 \cong A_1 \times A_1$ is reducible, with Helly number $h(D_2) = 3$ by Proposition~\ref{prop:helly-central-products}.
\end{proof}

For a general connected reductive group, the determination of the Helly number reduces directly to its almost simple factors.

\begin{cor}
\label{cor:factorwise-helly-classification}
Let $G$ be a connected reductive complex algebraic group of positive semisimple rank, and let $G_1, G_2, \dots, G_s$ be the almost simple factors of the simply connected cover of its derived subgroup $G^{\mathrm{der}}$. Then the Helly number of $G$ satisfies
\[
  h(G) = \max_{1 \le i \le s} h(G_i).
\]
In particular, if no factor $G_i$ is of type $D_m$ with $m \ge 4$ or of type $E_6$, then
\begin{equation}
\label{eq:exact-factorwise-helly}
  h(G) = 2 + \max_{1 \le i \le s} \operatorname{rank}(G_i).
\end{equation}

Consequently, any remaining ambiguity in $h(G)$ is inherited directly, factor by factor, from the unresolved cases in \eqref{eq:remaining-simple-gaps}.
\end{cor}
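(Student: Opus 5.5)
The corollary follows by combining Proposition~\ref{prop:helly-central-products} with Corollary~\ref{cor:helly-values-by-type}. No new geometric input is needed.

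First, the displayed identity $h(G)=\max_i h(G_i)$ is exactly \eqref{eq:helly-product-formula}. I would restate its justification briefly. Passing to $G/Z(G)^0$ and then to the simply connected cover of the derived group are central isogenies, up to removing the central torus. These operations induce containment-preserving bijections on parabolic subgroups and on maximal tori, so $h$ is unchanged. For the product $G_1\times\cdots\times G_s$, parabolics and maximal tori decompose factorwise. Compatibility of a family is therefore equivalent to compatibility of each projected family, which gives $h\le\max_i h(G_i)$. A minimally incompatible family in one factor, padded by whole groups in the other factors, gives the reverse inequality. Note that positive semisimple rank guarantees $s\ge1$, so the maximum is over a nonempty set.

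Second, for \eqref{eq:exact-factorwise-helly}, I would read off Table~\ref{tab:helly-values-by-type}. Each $G_i$ is almost simple and simply connected, of some type $X_{r_i}$ with $r_i=\operatorname{rank}(G_i)$. If $X\notin\{D_{r}\ (r\ge4),E_6\}$, the table gives $h(G_i)=r_i+2$. This covers types $A$, $B$, $C$, $G_2$, $F_4$, $E_7$ and $E_8$. It also covers the low-rank coincidences $D_3\cong A_3$ and $B_1\cong C_1\cong A_1$, which are consistent with the table since they still give $r+2$. Taking the maximum then yields $h(G)=2+\max_i r_i$.

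Third, for the final sentence, I would note the following. Every factor outside the two gap types contributes exactly $r_i+2$. Each gap factor contributes a value in $\{r_i+1,r_i+2\}$ by \eqref{eq:remaining-simple-gaps}. Hence $h(G)$ is determined once those finitely many factor values are known, so any ambiguity is inherited factor by factor.

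The only delicate step is the reduction to the simply connected product. Specifically, I must check that the central torus is contained in every parabolic and every maximal torus, and that the isogeny $G_1\times\cdots\times G_s\to G^{\mathrm{der}}$ does not merge distinct parabolics. Both facts hold because a central isogeny has kernel inside every maximal torus. This is already handled in Proposition~\ref{prop:helly-central-products}, so the corollary is essentially bookkeeping.
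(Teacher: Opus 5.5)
Your proposal is correct and matches the paper's (implicit) argument: the corollary is stated without a separate proof, as an immediate combination of Proposition~\ref{prop:helly-central-products}, whose formula \eqref{eq:helly-product-formula} is literally the first identity, with the type-by-type values in Corollary~\ref{cor:helly-values-by-type}. Your bookkeeping of the gap cases $D_m$ ($m\ge4$) and $E_6$, each contributing a value in $\{r_i+1,r_i+2\}$, is exactly what the final sentence asserts.
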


When $G$ is a torus, its unique parabolic subgroup is $G$ itself; hence, Definition~\ref{defn:helly_number} yields $h(G) = 1$.

\begin{rmk}
\label{rmk:total-rank-not-sharp}
The total semisimple rank does not uniquely determine $h(G)$ for a non-simple semisimple group. For example,
\[
  h\bigl((\operatorname{SL}_2)^s\bigr) = 3
\]
for all $s \ge 1$, despite $\operatorname{rank}\bigl((\operatorname{SL}_2)^s\bigr) = s$.
\end{rmk}

\section{Implications for Toric \texorpdfstring{$G$}{G}-bundles}
\label{sec:impl_for_toric_G-bundles}

Throughout this section, $G$ is a connected reductive complex algebraic group. All splitting statements for toric principal bundles mean equivariant splitting, that is, the existence of an equivariant reduction to a maximal torus of $G$.

We recall a splitting criterion for toric principal bundles, stemming from the classification of toric bundles \cite{KM} and the apartment splitting criterion \cite[Corollary~5.5]{HK}.

\begin{prop}
\label{prop:ray-parabolic-realization}
Let $X_\Sigma$ be a smooth toric variety. A collection of parabolic subgroups $\{P_\rho\}_{\rho \in \Sigma(1)}$ of $G$ arises as the ray parabolics of a framed toric principal $G$-bundle on $X_\Sigma$ if and only if
\begin{equation}
\label{eq:ray-parabolic-compatibility}
  \bigcap_{\rho \in \sigma(1)} P_\rho \text{ contains a maximal torus of } G \quad \text{for every cone } \sigma \in \Sigma.
\end{equation}
Moreover, such a bundle splits equivariantly if and only if all of its ray parabolics share a common maximal torus.
\end{prop}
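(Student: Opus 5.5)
The plan is to generalize the construction in Lemma~\ref{lem:toric-construction} from $\mathbb P^n$ to an arbitrary smooth fan $\Sigma$, and to deduce the splitting statement from the equivariant splitting criterion \cite[Corollary~5.5]{HK}, as was already done there.

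\emph{Necessity.} Suppose $\mathcal P$ is a framed toric principal $G$-bundle on $X_\Sigma$ and $\sigma\in\Sigma$ is a cone. By local equivariant triviality \cite[Theorem~2.5]{KM}, the bundle has a normalized trivialization over $X_\sigma$ with torus action given by a homomorphism $\rho_\sigma\colon T\to G$. The image $\rho_\sigma(T)$ is a torus, so it lies in some maximal torus $A_\sigma$ of $G$. For each $\rho\in\sigma(1)$ the ray parabolic is $P_\rho=P(\rho_\sigma\circ v_\rho)$, and a cocharacter with values in $A_\sigma$ always has $A_\sigma\subseteq P(\rho_\sigma\circ v_\rho)$. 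Hence $A_\sigma\subseteq\bigcap_{\rho\in\sigma(1)}P_\rho$, which is condition \eqref{eq:ray-parabolic-compatibility}.

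\emph{Sufficiency.} This repeats the proof of Lemma~\ref{lem:toric-construction} with $\mathbb P^n$ replaced by $X_\Sigma$.
\begin{enumerate}
\item For each ray $\rho$, choose a cocharacter $\lambda_\rho$ with $P(\lambda_\rho)=P_\rho$.
\item Fix a faithful embedding $G\hookrightarrow\operatorname{GL}(W)$ and form the weight filtrations $F_\rho(\bullet)$ on $W$ as in \eqref{eq:filtrations}.
\item For each cone $\sigma$, pick a maximal torus $A_\sigma\subseteq\bigcap_{\rho\in\sigma(1)}P_\rho$. Conjugating by an element of $P_\rho$, move each $\lambda_\rho$ into $A_\sigma$; this does not change $F_\rho$. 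The conjugated cocharacters commute, so the filtrations of the rays of $\sigma$ are simultaneously split.
\item Since $\Sigma$ is smooth, $\{v_\rho\}_{\rho\in\sigma(1)}$ extends to a lattice basis of $N$. So there is a homomorphism $\rho_\sigma\colon T\to A_\sigma$ taking $v_\rho$ to the conjugated $\lambda_\rho$ (extend arbitrarily on the complementary basis vectors, e.g.\ trivially). This is where smoothness matters for non-maximal cones.
\item Klyachko's compatibility condition now holds on every cone, so there is a toric vector bundle $\mathcal E$ with these filtrations.
\item Its transition functions $\rho_\tau(t)\rho_\sigma(t)^{-1}$ take values in $G$ on the open orbit. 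They extend regularly to $\operatorname{GL}(W)$-valued functions on overlaps, and since $G$ is closed they take values in $G$. Gluing trivial $G$-bundles gives a framed toric principal $G$-bundle, and \cite[Theorem~2.2]{KM} shows its ray parabolics are the $P_\rho$.
\end{enumerate}

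\emph{Main obstacle.} The delicate point is that the whole family must be realized by one $G$-bundle whose ray classes are well defined independently of the cone. Two things have to be checked. First, the classes attached to $\rho$ in different cones must agree: they are all conjugate to $\lambda_\rho$ under $P_\rho$, so they represent the same class by \cite[Theorem~2.2]{KM}. Second, the glued transition functions must satisfy the cocycle condition and extend from the open orbit; this follows because the associated vector bundle $\mathcal E$ exists by Klyachko's theorem.

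\emph{Splitting.} By \cite[Corollary~5.5]{HK}, $\mathcal P$ splits equivariantly if and only if its ray classes have representatives in a single maximal torus $A$. As in Lemma~\ref{lem:toric-construction}, the class of $\lambda_\rho$ has a representative in $A$ exactly when $A\subseteq P_\rho$. So equivariant splitting is equivalent to $\bigcap_\rho P_\rho$ containing a maximal torus.
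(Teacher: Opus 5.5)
Your proposal is correct and follows the paper's proof essentially step for step: you choose $\lambda_\rho$ with $P(\lambda_\rho)=P_\rho$, conjugate into $A_\sigma$ by elements of $P_\rho$ (which fix the weight filtrations), extend the ray generators of $\sigma$ to a lattice basis using smoothness, glue via Klyachko's theorem and the closedness of $G$ in $\operatorname{GL}(W)$, and apply the apartment criterion \cite[Corollary~5.5]{HK} for splitting. Your explicit necessity argument is a worthwhile addition, since the paper's proof only writes out the sufficiency direction: a maximal torus $A_\sigma\supseteq\rho_\sigma(T)$ centralizes every $\rho_\sigma\circ v_\rho$ and hence lies in each $P_\rho$.
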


\begin{proof}
To establish existence, fix a faithful closed embedding $G \hookrightarrow \operatorname{GL}(W)$. For each ray $\rho \in \Sigma(1)$, choose a cocharacter $\lambda_\rho \colon \mathbb{G}_m \to G$ such that $P(\lambda_\rho) = P_\rho$ (see \eqref{eq:parabolic_lambda}); if $P_\rho = G$, we take $\lambda_\rho = 0$.

By hypothesis \eqref{eq:ray-parabolic-compatibility}, for each cone $\sigma \in \Sigma$, there exists a maximal torus $A_\sigma \subset G$ contained in the intersection $\bigcap_{\rho \in \sigma(1)} P_\rho$. For each $\rho \in \sigma(1)$, since all maximal tori of $P_\rho$ are conjugate within $P_\rho$, there exists an element $p_\rho \in P_\rho$ conjugating $\lambda_\rho$ into $A_\sigma$. Because $P_\rho$ stabilizes the weight filtration $F_\rho(\bullet)$ on $W$ associated with $\lambda_\rho$, this conjugation leaves $F_\rho(\bullet)$ unchanged.

Since $X_\Sigma$ is smooth, the primitive ray generators of any cone $\sigma \in \Sigma$ form a subset of a lattice basis for cocharacter lattice $N$ (see \eqref{eq:cochar_lattice}). Complete the primitive ray generators of $\sigma$ to a lattice basis of $N$. Assigning the trivial cocharacter to the additional basis vectors extends the prescribed
cocharacters to a homomorphism
\[
  \rho_\sigma\colon T\longrightarrow A_\sigma\subseteq G.
\]
These homomorphisms guarantee that the collection of ray filtrations $\{F_\rho(\bullet)\}_{\rho \in \Sigma(1)}$ satisfies Klyachko's compatibility condition for toric vector bundles on each affine chart $X_\sigma$.

As in Lemma~\ref{lem:toric-construction}, normalized local trivializations over the affine cover $\{X_\sigma\}_{\sigma \in \Sigma}$ yield transition functions $g_{\tau\sigma}(t) = \rho_\tau(t)\rho_\sigma(t)^{-1} \in G$ on the dense torus orbit $T$. Klyachko's construction guarantees that these extend regularly to $\operatorname{GL}(W)$-valued transition functions on overlaps $X_\tau \cap X_\sigma$. Because $G$ is a closed algebraic subgroup of $\operatorname{GL}(W)$ and $T$ is dense in $X_\tau \cap X_\sigma$, these regular extensions take values in $G$ throughout. Gluing the local trivial bundles $X_\sigma \times G$ via $g_{\tau\sigma}$ constructs the required framed toric principal $G$-bundle $\mathcal{P}$ with ray parabolics $\{P_\rho\}_{\rho \in \Sigma(1)}$.

For the splitting assertion, the apartment criterion \cite[Corollary~5.5]{HK} states that $\mathcal{P}$ splits equivariantly if and only if its ray cocharacter classes admit representatives in a single maximal torus $A \subset G$. A class represented by $\lambda_\rho$ has a representative in $A$ if and only if $A \subseteq P(\lambda_\rho) = P_\rho$, as the maximal tori of $P_\rho$ are $P_\rho$-conjugate. Thus, all ray cocharacter classes admit representatives in $A$ if and only if 
\[
  A \subseteq \bigcap_{\rho \in \Sigma(1)} P_\rho,
\]
which holds precisely when the ray parabolics share a common maximal torus.
\end{proof}

\begin{lem}
\label{lem:all-obstruction-sizes}
For every integer $q$ with $3\leq q\leq h(G)$ there exist parabolic
subgroups $P_1,\ldots,P_q$ such that every proper subfamily has a
common maximal torus, but the whole family does not.
\end{lem}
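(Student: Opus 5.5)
The plan is to start from a minimally incompatible family of size $h(G)$ and cut it down to any prescribed size $q$ by a restriction argument. Two easy cases come first. If $q=h(G)$, the family is produced exactly as in the proof of Lemma~\ref{lem:helly-subgroup-monotonicity}. Since $h(G)-1$ fails the Helly property, there is an incompatible family all of whose subfamilies of size at most $h(G)-1$ are compatible; an inclusion-minimal incompatible subfamily then has size exactly $h(G)$. For $q<h(G)$, I would first note that $G$ has positive semisimple rank, since for a torus $h(G)=1<3$.

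For the case $3\le q<h(G)$, the simplest route is monotonicity plus an explicit rank-one configuration. Choose a subgroup $\operatorname{GL}_{q-1}$, or rather a Levi subgroup of type $A_{q-2}$, inside $G$ when it exists. The line configuration of Corollary~\ref{cor:helly-values-by-type} gives a minimally incompatible family of size $q$ in $\operatorname{GL}_{q-1}$. This family consists of the lines $\mathbb C\langle e_1\rangle,\dots,\mathbb C\langle e_{q-1}\rangle$ and $\mathbb C\langle e_1+\dots+e_{q-1}\rangle$. The transfer argument in the proof of Lemma~\ref{lem:helly-subgroup-monotonicity} then carries it to $G$: realize it as a non-split bundle on $\mathbb P^{q-1}$ via Lemma~\ref{lem:toric-construction}, extend the structure group, and invoke \cite[Corollary~5.6]{HK}. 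That argument yields a minimally incompatible family of size exactly $q$, not merely the inequality $h(G)\ge q$. This settles every $q$ for which such a type-$A$ subgroup exists. It is not uniform, however: for $G_2$ with $q=4$ one needs the $A_2$ subgroup, and one still has to check case by case that a suitable $A_{q-2}$ sits inside $G$.

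A uniform argument is preferable, so I would instead run the restriction procedure in reverse. Take a minimal incompatible family $P_0,\dots,P_{h-1}$ with $h=h(G)$, and let $\mathcal P$ be the non-split bundle on $\mathbb P^{h-1}$ from Lemma~\ref{lem:toric-construction}. Restrict it to a coordinate hyperplane $D$, replacing $G$ by the Levi $L=C_G(\lambda)$ as in Lemma~\ref{lem:centralizer_restriction}. I would then need a converse to Lemma~\ref{lem:principal_horrocks} in the form: if $\mathcal P|_D$ splits, then $\mathcal P$ splits. That converse holds precisely when $\dim D\ge2$, so the restriction preserves non-splitting while dimension stays at least $2$. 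The ray parabolics of the restricted $L$-bundle on $\mathbb P^{h-2}$ would then form an incompatible family of $h-1$ parabolics of $L$ with every proper subfamily compatible, and extending back to $G$ keeps it minimally incompatible. Iterating down to $\mathbb P^{q-1}$ with $q\ge3$ produces families of every size $q\ge3$, and this is exactly why the statement needs $q\ge3$.

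The main obstacle is identifying the ray parabolics of $\mathcal P|_D$ as an $L$-bundle, or after extension to $G$, and verifying minimal incompatibility. The non-splitting half follows from Lemma~\ref{lem:principal_horrocks}. The compatibility of proper subfamilies is automatic from Proposition~\ref{prop:ray-parabolic-realization}, because every proper set of rays of $\mathbb P^{q-1}$ spans a cone. I would therefore reduce everything to Proposition~\ref{prop:ray-parabolic-realization} applied to the restricted bundle. If identifying those ray parabolics proves awkward, I would fall back on the type-$A$ subgroup construction of the second paragraph.
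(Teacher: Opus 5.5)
Your uniform route is correct and is essentially the paper's proof. Both arguments realize a minimally incompatible family of size $h(G)$ as a non-split bundle on $\mathbb P^{h(G)-1}$ via Lemma~\ref{lem:toric-construction} and restrict to a coordinate $\mathbb P^{q-1}$. Non-splitting then comes from the contrapositive of Lemma~\ref{lem:principal_horrocks} (not a converse) since $q-1\ge 2$, and minimal incompatibility of the $q$ ray parabolics comes from Proposition~\ref{prop:ray-parabolic-realization}.

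The paper restricts in one step and keeps the structure group $G$, with the complementary torus acting. So the Levi reductions, the hyperplane-by-hyperplane iteration, the return trip through \cite[Corollary~5.6]{HK}, and the non-uniform type-$A$ fallback are all harmless but unnecessary.
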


\begin{proof}
Set $h = h(G)$. By the minimality of $h$ and the definition of the Helly property, there exists an inclusion-minimal family of exactly $h$ parabolic subgroups of $G$ without a common maximal torus. By Lemma~\ref{lem:toric-construction}, this family is realized as the collection of ray parabolic subgroups of a non-split toric principal $G$-bundle $\mathcal{E}$ on $\mathbb{P}^{h-1}$.

Choose a standard coordinate subspace $Y \cong \mathbb{P}^{q-1} \subset \mathbb{P}^{h-1}$. Equipping the restricted bundle $\mathcal{E}|_Y$ with the action of the torus complementary to the kernel of the ambient torus action on $Y$ endows $\mathcal{E}|_Y$ with the structure of a toric principal $G$-bundle over $Y$. This restricted bundle $\mathcal{E}|_Y$ must be non-split; otherwise, as $\dim (Y) = q-1 \ge 2$, Lemma~\ref{lem:principal_horrocks} would force $\mathcal{E}$ itself to split equivariantly over $\mathbb{P}^{h-1}$, yielding a contradiction.

The fan of $Y \cong \mathbb{P}^{q-1}$ contains $q$ rays, any proper subset of which spans a cone in the fan. Consequently, the $q$ ray parabolic subgroups associated with $\mathcal{E}|_Y$ satisfy the required property.
\end{proof}

\begin{theorem}[Generalization of Klyachko's fan criterion]
\label{thm:klyachko-fan-principal}
Let $X_\Sigma$ be a smooth toric variety, and put
$s=|\Sigma(1)|$. If $s\leq2$, every toric principal $G$-bundle
on $X_\Sigma$ splits.

Suppose $s\geq3$, and set $q=\min\{s,h(G)\}$.
The following conditions are equivalent:
\begin{enumerate}
  \item every toric principal $G$-bundle on $X_\Sigma$ splits;
  \item every set of $q$ distinct rays of $\Sigma$ is contained
        in a cone of $\Sigma$.
\end{enumerate}
Since $\Sigma$ is smooth, the second condition equivalently says
that those rays generate a cone of $\Sigma$.
In particular, if $s\geq h(G)$, one may take $q=h(G)$.
\end{theorem}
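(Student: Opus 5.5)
The whole argument runs through Proposition~\ref{prop:ray-parabolic-realization}. By that result, the toric principal $G$-bundles on $X_\Sigma$ are exactly the assignments $\rho\mapsto P_\rho$ of parabolics to rays such that every cone's rays have parabolics sharing a maximal torus. Such a bundle splits if and only if all the $P_\rho$ share a maximal torus. So condition (1) becomes a purely combinatorial statement: \emph{every family of parabolics $\{P_\rho\}_{\rho\in\Sigma(1)}$ that is compatible on each cone is globally compatible.} I would prove both directions in this language.

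\textbf{Small case $s\le 2$.} Here I would use the classical fact that any two parabolic subgroups of $G$ contain a common maximal torus; this follows from the Bruhat decomposition, since two Borels share a maximal torus. Hence any family of at most two parabolics is compatible, and every bundle splits.

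\textbf{$(2)\Rightarrow(1)$ for $s\ge3$.} Let $\{P_\rho\}$ be compatible on every cone. Condition (2) says every $q$-subset of the rays lies in a cone, so every subfamily of size at most $q$ has a common maximal torus; for size $<q$, extend to a $q$-subset, which is possible since $q\le s$.
\begin{itemize}
\item If $q=h(G)$, the definition of $h(G)$ gives global compatibility.
\item If $q=s$, then the whole set of rays lies in one cone, and compatibility on that cone is already global.
\end{itemize}
Either way the bundle splits.

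\textbf{$(1)\Rightarrow(2)$, by contrapositive.} Suppose some set $R$ of $q$ distinct rays lies in no cone. I would shrink $R$ to a minimal subset $R'$ not contained in a cone. Every proper subset of $R'$ lies in a cone (faces of smooth cones), and $|R'|\ge 2$. I would also like $|R'|\ge 3$. This is the main obstacle: two rays not spanning a cone is possible, for instance on $\mathbb P^1$ or on the Hirzebruch surfaces. My plan is to put the obstruction on $R'$ and make every other ray trivial.
\begin{itemize}
\item \emph{Case $|R'|=p\ge3$.} Since $p\le q\le h(G)$, Lemma~\ref{lem:all-obstruction-sizes} gives parabolics $P_1,\dots,P_p$ whose proper subfamilies are compatible but whose whole family is not. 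Assign these to the rays of $R'$ and assign $P_\rho=G$ to all other rays. For any cone $\sigma$, the set $\sigma(1)\cap R'$ is a proper subset of $R'$, so the parabolics on $\sigma$ are compatible. Proposition~\ref{prop:ray-parabolic-realization} then produces a bundle that does not split, contradicting (1).
\item \emph{Case $|R'|=2$.} Two rays $\rho_1,\rho_2$ span no cone. Choose a third ray $\rho_3\notin\{\rho_1,\rho_2\}$, available since $s\ge3$. Put a minimally incompatible triple on $\rho_1,\rho_2,\rho_3$ (size $3\le h(G)$, assuming positive semisimple rank) and $G$ elsewhere. The only pairs that can occur together in a cone are subsets of size at most $2$ of the triple, and every such pair is compatible, so local compatibility holds while the whole triple is incompatible. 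Hence some bundle does not split.
\end{itemize}
The torus case, $h(G)=1$, needs separate bookkeeping. There the contrapositive fails, because every bundle splits, so I would read the statement with $q$ forced to be at least $3$, or treat this degenerate case directly.

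Finally, for a smooth fan a set of rays lies in a cone exactly when it generates a face, which gives the reformulation of (2) in the statement.
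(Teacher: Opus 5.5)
Your proof is correct and is essentially the paper's. The direction $(2)\Rightarrow(1)$ is identical. For the converse, the paper likewise places a minimally incompatible family from Lemma~\ref{lem:all-obstruction-sizes} on the offending rays and $G$ on every other ray, then applies Proposition~\ref{prop:ray-parabolic-realization}.

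The shrinking to a minimal $R'$, and with it the separate case $|R'|=2$, is unnecessary. Local compatibility only needs each cone to meet the chosen rays in a \emph{proper} subset. That already holds for the original $q$-set $R$, simply because $R$ lies in no cone. So the paper puts a minimally incompatible $q$-family directly on $R$; here $q\ge 3$ because $h(G)\ge 3$ for positive semisimple rank.

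Your remark on the torus case is off. If $G$ is a torus then $q=1$, and condition (2) holds automatically, since each ray is itself a cone of $\Sigma$. Condition (1) holds trivially as well, so the statement is true as written and the contrapositive is vacuous rather than failing. Forcing $q\ge 3$ there would make the equivalence false, for example for $\mathbb P^2$.
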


\begin{proof}
If $G$ is a torus, every principal $G$-bundle is already
reduced to a maximal torus, and the assertion is immediate. We therefore assume that $G$ has positive semisimple rank.

For $s\leq2$, all ray parabolics have a common maximal torus. Now suppose $s\geq3$ and the second condition holds. The ray parabolics of any toric principal bundle have a common maximal torus on every $q$-element subfamily. If $s<h(G)$, then $q=s$, so this already concerns the whole family. If $s\geq h(G)$, the defining property of $h(G)$ gives the same conclusion: subfamilies of smaller size can be enlarged to size $h(G)$. The bundle therefore splits equivariantly.

Conversely, suppose that $q$ distinct rays
$\rho_1,\ldots,\rho_q$ are not contained in a cone.
Choose a minimally incompatible family $P_1,\ldots,P_q$
as in Lemma~\ref{lem:all-obstruction-sizes}. Assign $P_i$
to $\rho_i$, and assign $G$ to every other ray.
Each cone contains only a proper subset of the selected rays,
so condition~\eqref{eq:ray-parabolic-compatibility} holds.
The resulting toric principal bundle is nonsplit because all
its ray parabolics have no common maximal torus.
\end{proof}

\begin{cor}[Generalization of Klyachko's dimension criteria]
\label{cor:klyachko-dimension-principal}
Let $X_\Sigma$ be a smooth $n$-dimensional toric variety with
$|\Sigma(1)|\geq3$, and put $h=h(G)$.
\begin{enumerate}
  \item If $n=h-1$, every toric principal $G$-bundle on
        $X_\Sigma$ splits if and only if $X_\Sigma$ is affine.
  \item If $n=h$, every toric principal $G$-bundle on
        $X_\Sigma$ splits if and only if $X_\Sigma$ is affine
        or is torically isomorphic to $\mathbb P^n$.
\end{enumerate}
\end{cor}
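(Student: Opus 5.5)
The plan is to reduce both parts to Theorem~\ref{thm:klyachko-fan-principal} and then use the combinatorics of smooth complete and non-complete fans. Throughout, $h=h(G)$. Since $|\Sigma(1)|\ge 3$ forces $h\ge 3$ in any non-affine situation, I will dispose of trivial cases first. If $G$ is a torus, then $h=1$, so $n\le 1$, and the statement degenerates. So I assume positive semisimple rank, whence $h\ge 3$ and $n\ge 2$.

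For the implication ``affine $\Rightarrow$ all bundles split,'' suppose $X_\Sigma$ is affine and smooth. Then $\Sigma$ consists of a single cone $\sigma$ and its faces, so every set of rays lies in $\sigma$. Condition (2) of Theorem~\ref{thm:klyachko-fan-principal} holds for any $q$, and every toric principal $G$-bundle splits. In part (2), for $X_\Sigma\cong\mathbb P^n$ with $n=h$, we have $s=n+1>h$, so $q=h=n$. Any $n$ of the $n+1$ rays of $\mathbb P^n$ span a maximal cone, so condition (2) holds again.

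Conversely, assume every bundle splits. In part (1), $n=h-1$ and $q=\min\{s,h\}$. A smooth cone has at most $n=h-1$ rays. If $s\ge h$, then $q=h$ rays would have to lie in a single cone, which is impossible. Hence $s\le h-1=n$, so $q=s$, and condition (2) says all $s$ rays lie in one cone $\sigma$. Every cone of $\Sigma$ is generated by its rays, all of which lie in $\sigma$, so every cone is a face of $\sigma$. Thus $\Sigma$ is the face fan of $\sigma$ and $X_\Sigma$ is affine.

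In part (2), $n=h$. If $s\le n$, then $q=s$ and the same argument gives affineness. Suppose instead $s\ge n+1=h+1$, so $q=h=n$ and every $n$ rays span a cone of $\Sigma$. I then argue in two steps.

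\emph{Step one: $s=n+1$.} Take $n+1$ rays. Each $n$-subset spans an $n$-dimensional smooth cone, so each $n$-subset is linearly independent. Therefore $v_0,\dots,v_n$ satisfy a unique linear relation $\sum c_iv_i=0$ with all $c_i\ne0$. If some $c_i$ had opposite signs, Radon's theorem would apply: writing the relation as $\sum_{I}c_iv_i=\sum_{J}(-c_j)v_j$ with $I,J$ disjoint and nonempty, the cones on $I$ and $J$ would meet in their relative interiors. If $|I|,|J|\le n$, both are cones of $\Sigma$, which contradicts the fan axiom. Hence all $c_i$ have the same sign, so $\sum c_iv_i=0$ with positive coefficients, and the $n+1$ cones spanned by the $n$-subsets cover $N_\mathbb R$. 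The fan is then complete, and any further ray would lie in the interior of one of these maximal cones, contradicting the fan structure. So $s=n+1$ and the maximal cones are exactly the $n$-subsets.

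\emph{Step two: $\Sigma$ is the fan of $\mathbb P^n$.} Smoothness of the cones gives $|\det(v_j)_{j\ne i}|=1$ for all $i$. Combined with the positive relation, this forces $c_i$ all equal, so $\sum v_i=0$. Hence $\Sigma$ is the fan of $\mathbb P^n$, and $X_\Sigma\cong\mathbb P^n$ torically.

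I expect this combinatorial classification, ruling out extra rays and pinning down the relation via unimodularity, to be the main obstacle. It is essentially Klyachko's original argument for the vector bundle dimension criteria, and I would follow it.
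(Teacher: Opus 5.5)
Your proof is correct and follows the paper's route. You reduce to Theorem~\ref{thm:klyachko-fan-principal}, use that a smooth cone has at most $n$ rays for part (1) and for the case $s\le n$ of part (2), and then classify smooth fans in which every $n$ rays span a cone. The only difference is in that last classification step. The paper fixes one maximal cone with lattice basis $e_1,\dots,e_n$ and shows, by facet separation and unimodularity, that any extra ray equals $-e_1-\cdots-e_n$. You instead obtain completeness from the sign-uniform relation among $n+1$ rays, and then equal coefficients from $c_i\propto(-1)^i\det(v_j)_{j\ne i}$.

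One small wording fix: an extra ray need only lie in some maximal cone, not necessarily its interior. The fan axiom then forces it to be one of that cone's rays, which is the contradiction you want.
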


\begin{proof}
Set $s = |\Sigma(1)|$. Since $\Sigma$ is a simplicial $n$-dimensional fan, every cone $\sigma \in \Sigma$ contains at most $n$ rays. 

When $n = h - 1$ and $s \ge h$, the condition of Theorem~\ref{thm:klyachko-fan-principal} cannot be satisfied because no single cone contains $h$ rays. Conversely, when $s < h$, the condition forces all $s$ rays to lie in a single cone, which is equivalent to $X_\Sigma$ being affine. This completes the proof of the first assertion.

Now suppose $n = h$. If $s \le n$, the criterion requires all $s$ rays to lie in a single cone, which implies that $X_\Sigma$ is affine.

Suppose instead $s > n$. The condition requires every subset of $n$ rays to span a maximal cone in $\Sigma$. Select one such cone $\sigma_0$ and write its primitive generators as a lattice basis $e_1, \dots, e_n$ of $N$, using the smoothness of $X_\Sigma$.

For any additional primitive ray generator $v \in \Sigma(1) \setminus \{e_1, \dots, e_n\}$, each set
\[
    \{v, e_1, \dots, \widehat{e}_i, \dots, e_n\} \qquad (1 \le i \le n)
\]
must also generate a smooth maximal cone $\sigma_i \in \Sigma$. The fan intersection axiom dictates that $\sigma_i$ and $\sigma_0$ intersect at their common facet $\operatorname{cone}(e_1, \dots, \widehat{e}_i, \dots, e_n)$. Expressing $v = \sum\limits_{j=1}^n c_j e_j$, this separation forces the $i$-th coordinate $c_i$ to be strictly negative. Because $\sigma_i$ is smooth, its generators form a lattice basis, implying $|c_i| = 1$. Consequently, $c_i = -1$ for all $1 \le i \le n$, which uniquely determines
\[
    v = -e_1 - \dots - e_n.
\]
Thus, there is at most one additional ray ($s = n + 1$), and the required cones are precisely the maximal cones of the fan of $\mathbb{P}^n$.

Conversely, if $X_\Sigma$ is affine, all rays lie in a single cone. If $X_\Sigma \cong \mathbb{P}^n$, every collection of $n$ rays out of the $n+1$ total rays spans a maximal cone of the fan.
\end{proof}

\begin{rmk}
\label{rmk:klyachko-small-fans}


For $G = \operatorname{GL}_m$ with $m \ge 2$, the Helly number is $h(G) = m + 1$. Thus, Theorem~\ref{thm:klyachko-fan-principal} recovers \cite[Corollary~6.1.4]{Klyachko} when $s \ge m + 1$, while Corollary~\ref{cor:klyachko-dimension-principal} yields the two  splitting criteria of \cite[Corollary~6.1.5]{Klyachko}. 
\end{rmk}

\begin{cor}
\label{cor:exact-projective-splitting}
For $d \ge 2$, every toric principal $G$-bundle on $\mathbb{P}^d$ splits equivariantly if and only if $d \ge h(G)$. Moreover, every toric principal $G$-bundle on $\mathbb{P}^1$ splits equivariantly.
\end{cor}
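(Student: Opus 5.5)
The plan is to read the statement off from Theorem~\ref{thm:klyachko-fan-principal} applied to the fan of $\mathbb{P}^d$. This fan has $s=d+1$ rays, and every proper subset of them spans a cone, while the full set of $d+1$ rays does not lie in any cone.

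First, take $d=1$. Then $s=2$, and the first assertion of Theorem~\ref{thm:klyachko-fan-principal} (the case $s\le 2$) gives splitting at once. If $G$ is a torus, every toric principal bundle is already reduced to a maximal torus. Since $h(G)=1\le d$, the claimed equivalence holds trivially, so from now on assume $G$ has positive semisimple rank. Then $h(G)\ge 3$: for instance $h(G)\ge h(\mathrm{SL}_2)=3$ by Proposition~\ref{prop:helly-central-products} and Corollary~\ref{cor:helly-values-by-type}, or directly from a three-line configuration in a rank-one factor.

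Now let $d\ge2$, so $s=d+1\ge3$ and $q=\min\{d+1,h(G)\}$. By Theorem~\ref{thm:klyachko-fan-principal}, every toric principal $G$-bundle on $\mathbb{P}^d$ splits if and only if every $q$ distinct rays lie in a common cone. There are two cases.
\begin{itemize}
\item If $d\ge h(G)$, then $q=h(G)\le d$, so every $q$-subset of rays is a proper subset of the $d+1$ rays. It therefore spans a cone, and all bundles split.
\item If $d<h(G)$, then $q=d+1$, and the only $q$-subset is the full set of rays, which is not contained in any cone. By the converse direction of Theorem~\ref{thm:klyachko-fan-principal}, a nonsplit bundle exists.
\end{itemize}
This gives the equivalence with $d\ge h(G)$.

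The only delicate point is checking that the hypotheses of Theorem~\ref{thm:klyachko-fan-principal} apply. In particular, the converse direction uses Lemma~\ref{lem:all-obstruction-sizes} with $3\le q=d+1\le h(G)$. This is satisfied because $d\ge2$ and $d+1\le h(G)$ in the second case. The bound $h(G)\ge3$ is needed so that the range in Lemma~\ref{lem:all-obstruction-sizes} is nonempty. The rest is bookkeeping.
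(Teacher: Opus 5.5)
Your proposal is correct and matches the paper's proof, which likewise reads the corollary directly off Theorem~\ref{thm:klyachko-fan-principal}, using that every proper subset of the $d+1$ rays of $\mathbb{P}^d$ spans a cone while the full set does not; your case split on $q=\min\{d+1,h(G)\}$ and your separate torus case only make explicit what the paper leaves implicit. The extra bound $h(G)\ge 3$ is harmless but not needed, since in the case $2\le d<h(G)$ you already have $3\le d+1\le h(G)$.
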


\begin{proof}
The fan of $\mathbb{P}^d$ has $d + 1$ rays, and every proper subset of rays spans a cone, whereas the full set of $d + 1$ rays does not. The result then follows directly from Theorem~\ref{thm:klyachko-fan-principal}.
\end{proof}

\begin{rmk}
    The sufficient direction was established in \cite[Corollary 6.5]{HK}; the converse follows from the argument above.
\end{rmk}

The following result is an immediate consequence of the bundle-theoretic construction established in the proof of Theorem~\ref{thm:parabolicsGen}.

\begin{cor}
\label{cor:semisimple-rank-splitting}
Let $G$ be a connected reductive complex algebraic group of semisimple rank $m = \operatorname{rank}(G^{\mathrm{der}})$. If $d \ge m + 2$, then every toric principal $G$-bundle on $\mathbb{P}^d$ splits equivariantly. 
\end{cor}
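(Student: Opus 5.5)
The plan is to deduce this from Corollary~\ref{cor:exact-projective-splitting} together with the upper bound $h(G)\le m+2$ from Theorem~\ref{thm:parabolicsGen}. There is also a direct route that rereads the restriction argument in the proof of Theorem~\ref{thm:parabolicsGen}, and I will carry that out as well so the result does not depend on the exact definition of $h(G)$.

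\emph{Via the Helly number.} Let $d\ge m+2$. If $G$ is a torus, every principal $G$-bundle is already reduced to a maximal torus, so there is nothing to prove. Otherwise Theorem~\ref{thm:parabolicsGen} says that $m+2$ satisfies the Helly property in Definition~\ref{defn:helly_number}, so $h(G)\le m+2\le d$. Since $d\ge 2$, Corollary~\ref{cor:exact-projective-splitting} shows that every toric principal $G$-bundle on $\mathbb P^d$ splits equivariantly. Equivalently, Theorem~\ref{thm:klyachko-fan-principal} applies with $s=d+1\ge h(G)$ and $q=h(G)\le d$: any $q$ rays form a proper subset of the $d+1$ rays of $\mathbb P^d$, so they lie in a cone.

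\emph{Direct bundle-theoretic argument.} Start from an arbitrary toric principal $G$-bundle $\mathcal P$ on $\mathbb P^d$ and run the descending construction $(X_k,H_k,\mathcal P_k)$ from the proof of Theorem~\ref{thm:parabolicsGen}, now starting with $n=d$.
\begin{itemize}
\item At each stage, either all boundary fiber cocharacters are central in $H_k$, and Lemma~\ref{lem:centralizer_restriction}(2) splits $\mathcal P_k$.
\item Or we restrict to a coordinate hyperplane and pass to a Levi subgroup; by Lemma~\ref{lem:centralizer_restriction}(1) this drops the semisimple rank by at least one.
\end{itemize}
The process stops at some $k\le m$, with $\dim X_k=d-k\ge 2$. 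A reduction of $\mathcal P|_{X_k}$ to a torus is a splitting, so Lemma~\ref{lem:principal_horrocks} gives an equivariant splitting of $\mathcal P$ on $\mathbb P^d$.

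\emph{Main obstacle.} The one step needing care in the direct argument is that Lemma~\ref{lem:centralizer_restriction} requires the ambient space to have dimension at least $2$ at every stage where it is applied. This holds because each application occurs at a stage $k<m$, where $\dim X_k=d-k\ge 3$. Also, Lemma~\ref{lem:toric-construction} is not needed, since we start from a given bundle rather than from a family of parabolics. Otherwise the statement follows directly from the results already established.
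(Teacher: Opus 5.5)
Your direct bundle-theoretic argument is the paper's proof. The paper observes that the descending restriction construction in the proof of Theorem~\ref{thm:parabolicsGen} never uses that $\mathcal P$ came from Lemma~\ref{lem:toric-construction}. It therefore runs verbatim on an arbitrary toric principal $G$-bundle on $\mathbb P^d$ and stops at some $k\le m$ with $\dim X_k=d-k\ge 2$, where Lemma~\ref{lem:principal_horrocks} applies.

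Your lead route through $h(G)\le m+2$ and Corollary~\ref{cor:exact-projective-splitting} is also correct. Equivalently, it is Theorem~\ref{thm:klyachko-fan-principal} with $q=h(G)\le d<s=d+1$. It is not circular, since neither of those results depends on the present corollary. It is, however, a detour:
\begin{itemize}
\item it passes from the bundle to its ray parabolics;
\item it invokes the Helly property, which is itself proved by the same restriction argument applied to bundles built from parabolics;
\item it returns to the bundle via the apartment splitting criterion packaged in Proposition~\ref{prop:ray-parabolic-realization}.
\end{itemize}
What this route shows is that the statement is a formal special case of Corollary~\ref{cor:exact-projective-splitting}, which already gives the sharper threshold $d\ge h(G)$, e.g.\ $d\ge 2+\max_i\operatorname{rank}(G_i)$ by Proposition~\ref{prop:helly-central-products}. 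The direct route, by contrast, needs only Lemmas~\ref{lem:centralizer_restriction} and~\ref{lem:principal_horrocks} and no Helly-number input.

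One minor point in your ``main obstacle'' paragraph concerns the stage count. If you do not treat a torus $H_k$ as trivially split, Lemma~\ref{lem:centralizer_restriction}(2) may be invoked at the terminal stage $k=m$. There only $\dim X_k\ge 2$ holds, not $\ge 3$. That still meets the lemma's hypothesis, and it is the same bound $d-k\ge 2$ that Lemma~\ref{lem:principal_horrocks} requires.
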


\begin{prop}
\label{prop:levi-rank-helly}
Let $G$ be a connected reductive complex algebraic group, and let $\mathcal{P}$ be a collection of parabolic subgroups of $G$ whose Levi factors have semisimple rank at most $\ell$. A finite family of members of $\mathcal{P}$ shares a common maximal torus if every subcollection of at most $\ell+3$ members shares a common maximal torus.
\end{prop}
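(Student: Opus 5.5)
We follow the restriction argument in the proof of Theorem~\ref{thm:parabolicsGen}, but we use the hypothesis on Levi factors to make the first restriction step cost a large drop in semisimple rank. We induct on the size of the family. Subfamilies of members of $\mathcal P$ are again families of members of $\mathcal P$, so the inductive hypothesis applies to them. The base case, families of at most $\ell+3$ members, is the hypothesis itself.

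For the inductive step, assume the family is $P_0,\dots,P_n$ with $n+1\ge \ell+4$. By induction every proper subfamily has a common maximal torus. Lemma~\ref{lem:toric-construction} then gives a framed toric principal $G$-bundle $\mathcal P$ on $\mathbb P^n$ with ray parabolics $P_0,\dots,P_n$. Moreover, $\mathcal P$ splits equivariantly if and only if $\bigcap_\alpha P_\alpha$ contains a maximal torus.

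Run the construction from Theorem~\ref{thm:parabolicsGen} with $(X_0,H_0,\mathcal P_0)=(\mathbb P^n,G,\mathcal P)$. If every fiber cocharacter along the coordinate hyperplanes is central, Lemma~\ref{lem:centralizer_restriction}(2) splits $\mathcal P$ and we are done. Otherwise, pick a coordinate hyperplane $D_\alpha$ whose fiber cocharacter $\lambda$ is noncentral.

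The key new observation concerns the choice of $D_\alpha$. Since $D_\alpha$ is the divisor of the ray $\rho_\alpha$, the subtorus fixing it is $S_{\rho_\alpha}$. In a normalized chart $X_\sigma$ with $\rho_\alpha\subseteq\sigma$, the restriction $\rho_\sigma|_{S_{\rho_\alpha}}$ is exactly the ray cocharacter $\rho_\sigma\circ v_{\rho_\alpha}$, whose associated parabolic is $P_\alpha$ by \eqref{eq:ray_parabolic}. Hence $\lambda$ is $G$-conjugate to a cocharacter $\mu$ with $P(\mu)=P_\alpha$. Its centralizer $H_1=C_G(\lambda(\mathbb G_m))$ is therefore conjugate to $C_G(\mu)$, which is a Levi factor of $P_\alpha$. (This uses the standard fact that $C_G(\mu)$ is a Levi subgroup of $P(\mu)$.) Consequently $\operatorname{rank}(H_1^{\mathrm{der}})\le \ell$.

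From stage $1$ onward we iterate exactly as in Theorem~\ref{thm:parabolicsGen}. By Lemma~\ref{lem:centralizer_restriction}(1), each further noncentral restriction lowers the semisimple rank of $H_k$ by at least one. Once the rank reaches $0$, $H_k$ is a torus and the bundle is split. So the process stops at some stage $k\le \ell+1$, and
\[
  \dim X_k = n-k \ge (\ell+3)-(\ell+1)=2 .
\]
At that stage $\mathcal P|_{X_k}$ admits a reduction to a torus. Lemma~\ref{lem:principal_horrocks} then gives an equivariant splitting of $\mathcal P$ on $\mathbb P^n$, and Lemma~\ref{lem:toric-construction} yields a common maximal torus for $P_0,\dots,P_n$.

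The main obstacle is the identification of the first fiber cocharacter with the ray cocharacter of $P_\alpha$ up to conjugacy, together with the conclusion that its centralizer is a Levi of $P_\alpha$. I would verify this carefully through the normalized local trivializations used in Lemmas~\ref{lem:toric-construction} and \ref{lem:centralizer_restriction}. Two points need checking. First, the restriction of $\rho_\sigma$ to the subtorus fixing $D_\alpha$ is the cocharacter $\rho_\sigma\circ v_{\rho_\alpha}$. Second, the degenerate case $P_\alpha=G$ causes no trouble: there $\lambda$ is central, so such a hyperplane is never chosen. Everything after the first step is verbatim the earlier rank-decrease argument, started from rank $\le\ell$ instead of $m$.
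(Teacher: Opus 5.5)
Your proof is correct and is essentially the paper's argument. The paper restricts directly to the divisor $D$ of $P_0$, reduces $\mathcal{P}|_D$ to the Levi factor $L_0$ (of semisimple rank at most $\ell$) via Lemma~\ref{lem:centralizer_restriction}, and then applies Corollary~\ref{cor:semisimple-rank-splitting} on $D\cong\mathbb{P}^{n-1}$ with $n-1\ge\ell+2$, followed by Lemma~\ref{lem:principal_horrocks}; you instead write out that corollary's iteration yourself, starting from a noncentral hyperplane, and arrive at the same bound $\dim X_k = n-k\ge 2$.
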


\begin{proof}
Suppose, by way of contradiction, that the assertion fails, and let $\{P_0, P_1, \dots, P_n\} \subset \mathcal{P}$ be an inclusion-minimal family that does not contain a common maximal torus.

The hypothesis implies that $n + 1 \ge \ell + 4$ (so $n \ge \ell + 3$), while minimality guarantees that every proper subcollection shares a common maximal torus. Applying Lemma~\ref{lem:toric-construction}, this subfamily compatibility allows us to construct a toric principal $G$-bundle $\mathcal{P}$ over $\mathbb{P}^n$ whose ray parabolics are precisely $P_0, \dots, P_n$.

Consider the coordinate hyperplane $D \cong \mathbb{P}^{n-1} \subset \mathbb{P}^n$ corresponding to $P_0$. The centralizer of the ray-generating cocharacter associated with $P_0$ is a Levi factor $L_0 \subset P_0$, which by hypothesis has semisimple rank at most $\ell$. By Lemma~\ref{lem:centralizer_restriction}, the restriction $\mathcal{P}|_D$ admits a reduction of structure group to $L_0$.

Since $\dim (D) = n - 1 \ge \ell + 2$, Corollary~\ref{cor:semisimple-rank-splitting} implies that the reduced principal $L_0$-bundle over $D$ splits equivariantly for the complementary torus. In particular, the restricted bundle $\mathcal{P}|_D$ splits as an ordinary principal $G$-bundle. 

As $\dim (D) \ge 2$, Lemma~\ref{lem:principal_horrocks} guarantees that $\mathcal{P}$ itself splits equivariantly over all of $\mathbb{P}^n$. By Lemma~\ref{lem:toric-construction}, this implies that the entire family $\{P_i\}$ shares a common maximal torus, contradicting our initial assumption.

\end{proof}

Since Borel subgroups are parabolic subgroups whose Levi factors are maximal tori (and thus have semisimple rank $\ell = 0$), setting $\ell = 0$ in Proposition~\ref{prop:levi-rank-helly} immediately yields the following generalization of Corollary~6.2.2 in \cite{Klyachko}.

\begin{cor}
\label{cor:borels}
Let $G$ be a connected reductive complex algebraic group, and let $\{B_\alpha\}_{\alpha\in A}$ be a finite family of Borel subgroups of $G$. If every subfamily of at most three members contains a common maximal torus of $G$, then the entire intersection $\bigcap\limits_{\alpha\in A} B_\alpha$ contains a common maximal torus of $G$.
\end{cor}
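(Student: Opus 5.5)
The plan is to specialize Proposition~\ref{prop:levi-rank-helly} to $\ell=0$. A Borel subgroup $B$ of $G$ is a parabolic subgroup whose Levi factor is a maximal torus, so its semisimple rank is $0$. Take $\mathcal P$ to be the collection of all Borel subgroups of $G$. The hypothesis of that proposition then reads: every subcollection of at most $0+3=3$ members shares a common maximal torus. Its conclusion is exactly the assertion of Corollary~\ref{cor:borels}.

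The one point to verify is that the Levi factor appearing in the proof of Proposition~\ref{prop:levi-rank-helly} really has semisimple rank $0$. That proof uses $L_0=C_G(\lambda_0(\mathbb G_m))$, where $\lambda_0$ is a cocharacter with $P(\lambda_0)=B_0$. We have $B_0=P(\lambda)$ exactly when $\lambda$ is regular, meaning $\langle\alpha,\lambda\rangle\neq0$ for every root $\alpha$ relative to a maximal torus containing $\lambda(\mathbb G_m)$. In that case the root system $\Phi_L=\{\alpha\mid\langle\alpha,\lambda\rangle=0\}$ computed in Lemma~\ref{lem:centralizer_restriction} is empty, so $L_0$ is a maximal torus and $\operatorname{rank}(L_0^{\mathrm{der}})=0$.

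With this, the argument inherited from Proposition~\ref{prop:levi-rank-helly} runs as follows. Take a minimal counterexample $B_0,\dots,B_n$; it has $n\ge3$. Restrict the bundle from Lemma~\ref{lem:toric-construction} to $D\cong\mathbb P^{n-1}$, which has dimension at least $2$. The reduction to $L_0$ is a torus bundle and is therefore already split, so Corollary~\ref{cor:semisimple-rank-splitting} is not even needed. Lemma~\ref{lem:principal_horrocks} then gives equivariant splitting over $\mathbb P^n$, and Lemma~\ref{lem:toric-construction} yields a common maximal torus, which is a contradiction.

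If $G$ is a torus, the only Borel subgroup is $G$ itself, and the statement is trivial. No step is a real obstacle; the only care needed is the regularity observation above.
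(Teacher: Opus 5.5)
Your proposal is correct and follows the paper's proof: the paper also obtains the corollary by setting $\ell=0$ in Proposition~\ref{prop:levi-rank-helly}, since the Levi factor of a Borel subgroup is a maximal torus. Your extra steps are valid elaborations of that same route: you check that $P(\lambda_0)=B_0$ forces $\lambda_0$ to be regular, so $C_G(\lambda_0(\mathbb{G}_m))$ is a maximal torus, and you note that the reduction over $D$ is then already a torus bundle, so Corollary~\ref{cor:semisimple-rank-splitting} is not needed.
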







As another application of this framework, we obtain an exact Helly-type threshold for families of partial flags in $\mathbb{C}^m$ parametrized by the number of their nonzero successive quotients.

\begin{cor}
\label{cor:partial-flag-length-helly}
Fix integers $2 \le s \le m$. For any finite collection of partial flags in $\mathbb{C}^m$ each having at least $s$ nonzero successive quotients (i.e., at least $s$ steps), the optimal threshold for simultaneous adaptation to a common basis is
\[
  m - s + 3.
\]
In particular:
\begin{itemize}
  \item setting $s = 2$ (flags with two quotients, corresponding to single subspaces) recovers Klyachko's threshold of $m + 1$;
  \item setting $s = m$ (complete flags, corresponding to Borel subgroups) recovers the threshold of $3$.
\end{itemize}
\end{cor}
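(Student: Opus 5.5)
The upper bound will come from Proposition~\ref{prop:levi-rank-helly} and the lower bound from an explicit minimally incompatible family. The key observation is that the stabilizer $P\subset\operatorname{GL}_m$ of a partial flag
\[
0=\mathcal F_0\subsetneq\mathcal F_1\subsetneq\cdots\subsetneq\mathcal F_t=\mathbb C^m
\]
with $t$ nonzero successive quotients of dimensions $r_1,\dots,r_t$ has Levi factor $\prod_{j=1}^t\operatorname{GL}_{r_j}$. Its semisimple rank is $\sum_j(r_j-1)=m-t$. Flags with at least $s$ steps therefore have Levi factors of semisimple rank at most $\ell=m-s$. Adaptation of flags to a common basis is the same as the stabilizers sharing a maximal torus, as in Section~\ref{sec:review}. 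So Proposition~\ref{prop:levi-rank-helly}, applied to $G=\operatorname{GL}_m$ with this $\ell$, shows that testing subfamilies of at most $\ell+3=m-s+3$ flags suffices.

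For optimality I must show that testing subfamilies of size at most $m-s+2$ does not suffice. So I will exhibit $m-s+3$ flags, each with at least $s$ steps, such that every proper subfamily is adapted to some basis but the whole family is not.

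Set $k=m-s+2$, so $2\le k\le m$. Let $U=\operatorname{span}(e_1,\dots,e_k)$ and take the $k+1$ lines
\[
L_i=\mathbb C\langle e_i\rangle\ (1\le i\le k),\qquad L_{k+1}=\mathbb C\langle e_1+\cdots+e_k\rangle .
\]
Extend each $L_i$ to the flag
\[
L_i\subset U\subset U+\mathbb C\langle e_{k+1}\rangle\subset\cdots\subset U+\mathbb C\langle e_{k+1},\dots,e_m\rangle=\mathbb C^m .
\]
The inclusion $L_i\subsetneq U$ is strict because $k\ge2$. The successive quotients are $L_i$, then $U/L_i$, then $m-k=s-2$ further lines, so each flag has exactly $s$ steps.

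For a proper subfamily, omit some $L_j$. The remaining $k$ lines are independent and span $U$, as in the type $A$ example of Corollary~\ref{cor:helly-values-by-type}. Choose spanning vectors of these $k$ lines and append $e_{k+1},\dots,e_m$. The resulting basis is adapted to every remaining flag.

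For the full family, suppose a basis $\mathcal B$ were adapted to all of the flags. Since $U$ is spanned by a subset of $\mathcal B$, exactly $k$ vectors of $\mathcal B$ lie in $U$. Each $L_i$ is spanned by a single vector of $\mathcal B$, and that vector lies in $U$. The $k+1$ lines $L_i$ are pairwise distinct, so they would need $k+1$ distinct vectors of $\mathcal B\cap U$, which is a contradiction.

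The only point needing care is the step count, and it is what forces the choice $k=m-s+2$: the padding $e_{k+1},\dots,e_m$ contributes exactly $s-2$ steps, while $k\ge2$ keeps $L_i\subsetneq U$. The special cases follow directly: $s=2$ gives $k=m$, which is Klyachko's configuration with threshold $m+1$, and $s=m$ gives $k=2$ and threshold $3$, matching Corollary~\ref{cor:borels}.
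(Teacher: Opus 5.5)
Your proposal is correct and follows essentially the same route as the paper's proof. For the upper bound, you compute the Levi semisimple rank $m-t\le m-s$ and apply Proposition~\ref{prop:levi-rank-helly} with $\ell=m-s$. For the lower bound, you take $m-s+3$ lines in an $(m-s+2)$-dimensional subspace, any $m-s+2$ of them independent, and complete each with the same chain of subspaces above it. The only difference is that you fix the concrete configuration $e_1,\dots,e_k,e_1+\cdots+e_k$ and write out the adapted bases and the counting contradiction, which the paper states more briefly.
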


\begin{proof}
If a flag has $t\geq s$ successive quotients of dimensions
$d_1,\ldots,d_t$, its stabilizer has Levi factor
$\prod\limits_{j=1}^t\operatorname{GL}_{d_j}$, whose semisimple rank is given by 
\[
  \operatorname{rank}\left(
    \left(\prod_{j=1}^t\operatorname{GL}_{d_j}\right)^{\mathrm{der}}
  \right)
  =\sum_{j=1}^t(d_j-1)=\left(\sum_{j=1}^td_j\right)-t=m-t\le m-s.
\]
 Hence, Proposition~\ref{prop:levi-rank-helly} implies the upper bound.

For the lower bound put $d=m-s+2$, choose a $d$-dimensional
subspace $W\subseteq\mathbb C^m$, and take $d+1$ lines in $W$
such that every $d$ are independent. Complete each line to a flag
with dimension sequence
\[
  0,1,d,d+1,\ldots,m,
\]
using the same $W$ and the same subspaces above it. These flags
have exactly $s$ nonzero successive quotients. Any $d$ admit a
common adapted basis, but all $d+1$ do not: a common basis would
make $W$ coordinate and would give $d+1$ distinct coordinate
lines inside a $d$-dimensional space. Thus the threshold is at
least $d+1=m-s+3$.
\end{proof}


\section{Interpretation in Terms of Buildings}
\label{sec:buildings}

We recall the building-theoretic language needed to interpret the
preceding results; see \cite{AB} for general background. A simplicial
building is a simplicial complex equipped with subcomplexes called
\emph{apartments}, each isomorphic to a fixed Coxeter complex, such
that any two simplices lie in an apartment and any two apartments
admit an isomorphism fixing their intersection pointwise.
Its maximal simplices are called \emph{chambers}.
Spherical apartments are finite Coxeter complexes.

\begin{definition}
\label{defn:apartment-helly}
The \emph{apartment Helly number} $h(\Delta)$ of a building $\Delta$ with a specified apartment system is the smallest positive integer $h$ such that every finite family $\mathcal{S}$ of simplices is contained in a common apartment whenever every subfamily of size at most $h$ is contained in a common apartment. If no such integer exists, we set $h(\Delta) = \infty$. 

\end{definition}

Let $\mathcal{A}$ denote the collection of apartments in $\Delta$. For any simplex $\sigma \in \Delta$, let
\[
  \mathcal{A}(\sigma) \coloneqq \{ A \in \mathcal{A} \mid \sigma \subseteq A \}
\]
be the subcollection of apartments containing $\sigma$. A family $\mathcal{S} \subseteq \Delta$ of simplices lies in a common apartment if and only if
\[
  \bigcap_{\sigma \in \mathcal{S}} \mathcal{A}(\sigma) \neq \varnothing.
\]

Thus $h(\Delta)$ is the classical Helly number of this incidence family in the set of apartments.

\subsection{Spherical buildings and common maximal tori}

Let $G$ be a connected reductive complex algebraic group, and put
$r=\operatorname{rank}(G^{\mathrm{der}})>0$.
The spherical building $\Delta_{\mathrm{sph}}(G)$ has proper
parabolic subgroups as simplices, ordered by reverse inclusion.
Maximal proper parabolics correspond to vertices, while Borel
subgroups correspond to chambers. A maximal torus $T\subseteq G$
determines the apartment
\[
  A_T=\{P\subsetneq G\mid\ P\text{ is parabolic and }T\subseteq P\}.
\]
Consequently, parabolics $P_1,\ldots,P_N$ correspond to simplices in one apartment if and only if $\bigcap\limits_iP_i$ contains a maximal torus of $G$. An occurrence of $P_i=G$ imposes no condition and may
be omitted.

Translating the preceding results into building-theoretic terms yields global Helly bounds for the spherical building associated with $G$.

\begin{cor}
\label{cor:spherical-building-helly}
Let $G$ be a connected almost simple complex algebraic group of positive semisimple rank. Then
\[
    h\bigl(\Delta_{\mathrm{sph}}(G)\bigr) = h(G).
\]
In particular, the exact values and bounds for $h\bigl(\Delta_{\mathrm{sph}}(G)\bigr)$ coincide with those listed in Corollary~\ref{cor:helly-values-by-type}, leaving open cases only for types $D_m$ and $E_6$.
\end{cor}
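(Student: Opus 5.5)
The plan is to prove the two inequalities $h(\Delta_{\mathrm{sph}}(G))\le h(G)$ and $h(\Delta_{\mathrm{sph}}(G))\ge h(G)$ by passing directly through the dictionary recorded just before the statement. A family of proper parabolics $P_1,\dots,P_N$ lies in a common apartment $A_T$ if and only if $T\subseteq\bigcap_i P_i$. The correspondence between simplices of $\Delta_{\mathrm{sph}}(G)$ and proper parabolic subgroups is a bijection. Apartments are exactly the sets $A_T$ indexed by maximal tori $T$. Hence the incidence family $\{\mathcal A(\sigma)\}$ is literally the family $\{T : T\subseteq P_\sigma\}$ in the set of maximal tori.

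For the upper bound, take a finite family $\mathcal S$ of simplices in which every subfamily of size at most $h(G)$ lies in an apartment. Translate $\mathcal S$ into the family of proper parabolics $\{P_\sigma\}$. The hypothesis says that every subfamily of at most $h(G)$ of these contains a common maximal torus. By Definition~\ref{defn:helly_number} the whole intersection then contains a maximal torus $T$, so $\mathcal S\subseteq A_T$.

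For the lower bound, one must exhibit a family of simplices witnessing that $h(G)-1$ does not suffice. As in the proof of Lemma~\ref{lem:helly-subgroup-monotonicity}, minimality of $h(G)$ produces an inclusion-minimal incompatible family of exactly $q=h(G)$ parabolics. The one point needing care is that these must be proper parabolics, since the building only sees proper ones. If some member equalled $G$, deleting it would leave an incompatible proper subfamily, contradicting minimality. So all $q$ members are proper, and they give simplices with every proper subfamily in an apartment but no common apartment. (Since $G$ has positive semisimple rank, $q\ge 3$ by Theorem~\ref{thm:parabolicsGen} and the type-$A_1$ example, though this is not needed.)

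Nothing here is a real obstacle; the argument is a formal translation. The last sentence of the statement then follows by quoting Corollary~\ref{cor:helly-values-by-type}. The only subtlety is that the reverse-inclusion ordering and the apartment system are the standard ones. I would remark that almost simplicity is not used beyond positive semisimple rank, so the equality also holds for general reductive $G$ via Proposition~\ref{prop:helly-central-products}.
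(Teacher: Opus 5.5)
Your proposal is correct and follows the paper's argument, which is exactly this translation between simplices and proper parabolics, and between apartments and maximal tori. Your explicit check that a minimally incompatible family contains no copy of $G$ is the point the paper covers by its remark that occurrences of $P_i=G$ may be omitted. (In your aside, the bound $q\ge 3$ would come from Lemma~\ref{lem:helly-subgroup-monotonicity} applied to a semisimple-rank-one Levi subgroup together with the type-$A_1$ example; Theorem~\ref{thm:parabolicsGen} only gives the upper bound and hence finiteness of $q$. As you say, that bound is not needed.)
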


\begin{proof}
The simplices of $\Delta_{\mathrm{sph}}(G)$ correspond to proper parabolic subgroups of $G$, and a collection of simplices lies in a common apartment if and only if the corresponding parabolic subgroups share a common maximal torus. Hence,
\[
    h\bigl(\Delta_{\mathrm{sph}}(G)\bigr) = h(G).
\]


\end{proof}

\end{document}